\newtheorem{theorem}{Theorem}
\newtheorem{lemma}[theorem]{Lemma}
\newtheorem{corollary}[theorem]{Corollary}
\newtheorem{claim}[theorem]{Claim}
\newtheorem{proposition}[theorem]{Proposition}
\theoremstyle{definition}
\theoremstyle{remark}
\newtheorem{remark}[theorem]{Remark}
\numberwithin{equation}{section}
\newcommand\RR{{\mathbb R}}
\newcommand\EE{{\mathbb E}}
\newcommand\PP{{\mathbb P}}
\newcommand\ev{{\mathrm E}}
\newcommand\pr{{\mathrm P}}
\def\M{\mathcal{ M}}
\def\A{\mathcal{ A}}
\def\S{\mathcal{S}}
\def\W2{W^{1,2}({\cal O}(M))}
\def\E{\mathcal{ E}}
\def\G{\mathfrak{G}}
\newcommand\one{{\mathbf 1}}
\newcommand{\MM}{\mathcal M}
\newcommand{\stb}{\mbox{\small $\frac{2}{\sqrt{\beta}}$}}
\newcommand{\tb}{\mbox{\small $\frac{2}{\beta}$}}
\newcommand{\mm}{\mathfrak m}
\def\ss{\mathfrak s}
\newcommand{\tr}{\mathrm{tr}}
\title{Spiking the random matrix hard edge} 
\author{{Jos\'e A. Ram\'irez}\footnote{Department of Mathematics,  Universidad de Costa Rica. e-mail:
{\tt{alexander.ramirez$\_$g@ucr.ac.cr.}}},  { Brian Rider}\footnote{Department of Mathematics, Temple University. e-mail: {\tt{brian.rider@temple.edu.}}}}
\date{} 
\begin{document}
\maketitle

\begin{abstract}
We characterize the limiting smallest eigenvalue distributions (or hard edge laws) for sample covariance type matrices drawn from a spiked population. In the case of a single spike, the results are valid in the context of the general $\beta$ ensembles. For multiple spikes, the necessary construction restricts matters to real, complex or quaternion ($\beta=1,2,$ or $4$) ensembles.  The limit laws are described in terms of a random  integral operators, and partial differential equations satisfied by the corresponding distribution functions are derived as corollaries. We also show that, under a natural limit, all spiked hard edge laws derived here degenerate to the critically spiked soft edge laws (or deformed Tracy-Widom laws). The latter were first described at $\beta=2$ by Baik, Ben Arous, and Pech\'e \cite{BBP}, and from a unified $\beta$ random operator point of view by Bloemendal and Vir\'ag \cite{BV1, BV2}.
\end{abstract}


\section{Introduction}

A basic problem in mathematical statistics is to describe the law of the largest eigenvalue of sample covariance (or Wishart) matrices of the form $X \Sigma X^{\dagger}$ in which   $X$ is $n \times m$ and comprised  independent unit Gaussians in $\mathbb{F} = \RR, \mathbb{C}, $ or $\mathbb{H}$ and $\dagger$ is the associated conjugate transpose. One is typically interested in the limit as $n$ and $m$ tend to $\infty$ with $\Sigma$ some deterministic sequence of symmetric population matrices. When $\Sigma$ is the identity, this so-called soft-edge limit is well known to be given by the $\beta=1,2$ or $4$ Tracy-Widom laws  \cite{TW1, TW2} (for the case of   $\RR, \mathbb{C}, $ or $\mathbb{H}$ entries respectively). Moving toward the more general problem, the spiked ensembles introduced by Johnstone in which $\Sigma = \Sigma_r \oplus I_{m-r}$ and $r$ remains fixed as $n$ and $m$ grow have generated considerable interest.  Here the subscripts specify the dimensions of the diagonal matrix $\Sigma_r$ and identity matrix $I_{m-r}$.

Using the determinantal framework at $\beta=2$, \cite{BBP} proved there exists a phase transition (with respect to the entries of $\Sigma_r$).  Below criticality one sees Tracy-Widom in the limit, above criticality there  are Gaussian effects (the limit given by the law of the largest eigenvalue of an $r \times r$ GUE), with a new one parameter family of spiked soft-edge laws in the crossover regime.    Subsequent analytic work was carried at $\beta = 1$ and $\beta = 4$ in \cite{Mo} and \cite{Wang}.

 In another direction, Bloemendal-Vir\'ag \cite{BV1, BV2} proved that the $\beta=1,2,$ or $4$ soft-edge (critically) spiked laws can  be characterized in a unified way
through the eigenvalue problem for the random operator $\mathcal{H}$ acting on functions $f \in L^2[ [0,\infty), \mathbb{F}^r]$ defined by
\begin{equation}
\label{SAOr}
  \mathcal{H} = - \frac{d^2}{dx^2} + r x + \sqrt{2} \mathcal{B}_x^{\prime}, \qquad f^{\prime}(0) = W f(0).
\end{equation}
Here $\mathcal{B}_t$ is the standard $\mathbb{F}$-invariant Brownian motion, that is, for $U \in U_r(\mathbb{F}) =
\{ V \in \mathbb{F}^{r \times r} : V V^\dagger = I \}$ it holds that $U \mathcal{B}_t U^{\dagger} \sim  \mathcal{B}_t$ (equality in law),  and $W$ is the appropriate scaling limit of the matrix $\Sigma_r$. At $r=1$ the result holds for all $\beta > 0$. In that case the noise term reduces to $\frac{2}{\sqrt{\beta}} b_x^{\prime} $ for $x \mapsto b_x$ a standard Brownian motion, and $\mathcal{H}$ is recognized as the Stochastic Airy Operator from \cite{RRV}, but with a simple Robin $f'(0) = w f(0)$ rather than Dirichlet boundary condition at the origin. Hence, for $r=1$ the single  parameter $c$ interpolates between the general beta Tracy-Widom  law at $w =+\infty$ (which forces $f(0)=0$) and the Gaussian law in the supercritical regime, $w =-\infty$ (which requires a little perturbation argument to make precise, see \cite{BV1}).

The results of \cite{BV1, BV2} follow upon those of \cite{RRV} in that they hinge on tridiagonal representations for the spiked Wishart ensembles. For $r=1$ these are the same random tridiagonal matrices introduced by Dumitriu-Edelman \cite{DE}, which subsequently formed the basis for the conjectured Stochastic Airy Operator formulation of the Tracy-Widom($\beta$) laws in \cite{ES,Sutton} (and again proved in \cite{RRV}). In particular consider the $X \Sigma^{1/2}$ for $X$ as above (comprised of iid real/complex/quaternion Gaussians) but specify  
$\Sigma = \sigma \oplus I_{m-1}$ for a scalar $\sigma$. Then the classical Householder procedure can be carried out, taking $X \Sigma^{1/2}$ into 
\begin{equation}
\label{B} 
   B_{\sigma} = \frac{1}{\sqrt{\beta}} \left[ \begin{array}{ccccc}
\sqrt{\sigma} \chi_{m \beta} & \chi_{(n-1)\beta} & & & \\
& \chi_{(m-1)\beta} & \chi_{(n-2)\beta} & & \\
& & \ddots & \ddots & \\
& & & \chi_{(m-n+2)\beta} & \chi_{\beta} \\
& & & &\chi_{(m -n +1)\beta}  \\
\end{array}
\right],
\end{equation}
up to a unitary conjugation. That is, $B_{\sigma}^{} B_{\sigma}^T$ has the same eigenvalues as $X \Sigma X^{\dagger}$ in the real/complex/quaternion  ($\beta = 1/2/4$) cases, and afterwards this ensemble may be declared the general one-spiked model. For $r >1$, one of the key contributions of \cite{BV2} is a block tridiagonalization procedure (which we revisit in Section 3) that commutes with $r$-fold spiking.  Unfortunately, this model does not immediately generalize to all $\beta > 0$.

Our goal here is to establish an analogous set of results for the critically spiked ``hard edge". The hard edge refers to the limit laws for the minimal eigenvalues of Wishart type ensembles in the regime $m = n+a$ (which forces a non-trivial interaction of the random spectrum with the origin). Classically - that is, for $\beta=1,2,4$ -  these laws were also identified by Tracy and Widom in terms of Painlev\'e functions \cite{TW2}. They come indexed by the parameter $a$ and degenerate to the more well known Tracy-Widom($\beta = 1,2,4$) laws of \cite{TW1,TW3} in the limit $a \rightarrow \infty$. This hard-to-soft transition in the classical setting appears to have first 
been worked out in \cite{BF}. For general $\beta >0$ we introduced a random operator for the hard edge in \cite{RR} and demonstrated that, as expected, its ordered eigenvalues went over into those for the  Stochastic Airy Operator for all $\beta$.
 
To critically spike the hard edge the entries of $\Sigma_r$ must be tuned to zero as $n \rightarrow \infty$. We show that if $n \Sigma_r$ has an appropriate limit then there is a new family of random operator limits indexed by these spiking parameters. These spiked hard edge operators produce diffusion/PDE descriptions for the new limit laws (in the manner familiar from \cite{BV1, BV2, RR, RRV}), and, again as expected, recapture all known spiked Tracy-Widom laws in the $a \rightarrow \infty$ limit.

We mention that for $\beta =2 $ and at the level of correlation functions the hard edge spiking was worked out in \cite{DF}.

The next few articles of the introduction detail: (i) the limiting spiked hard edge operators, (ii) diffusion and partial differential equation descriptions of the corresponding spiked hard edge laws,  (iii) the transition between the spiked hard edge laws and the spiked/deformed Tracy-Widom laws (for all $\beta$ at $r=1$, and for $\beta=1, 2, 4$ otherwise), and last (iv) a description of the supercritical regime for the hard edge via a recent matrix generalization of the Dufresne identity.

\subsection*{{Operator limits}}

While structurally similar we state the limiting operator characterizations separately for $r=1$ and $r>1$. In addition to the restriction of the multispike limit operator to $\beta =1,2,$ or $4$, the $r=1$ operator has a secondary diffusion description not available for $r>1$.

We denote the $r$-spiked Wishart ensemble by $W_{n, \Sigma_r}$, or simply  $W_{n, \sigma}$ when $r=1$ (in which case it has the representation $B_{\sigma}^{} B_{\sigma}^T$ with $B_{\sigma}$ as in \eqref{B} and $m = n+a$).

\begin{theorem}  
\label{thm1}
Given a standard Brownian motion $ x \mapsto b(x)$, define: for any $a >-1$ and $\beta > 0$, 
\begin{equation}
\label{speedscale}
    \mm(dx) = e^{-(a+1) x - \stb b(x) }  dx,  \quad {\ss}(dx) = e^{ a x   + \stb b(x) }  dx, 
\end{equation}
along with the (random) integral operator
\begin{equation}
\label{IntegralForm}
 ( \G  f  )(x)  = \int_0^{\infty}  \int_0^{x \wedge y} \ss(dz)   f(y) \mm(dy) + \frac{1}{c} \int_0^{\infty} f(y) \mm(dy).
\end{equation}
For $c \in  (0, \infty]$, $\G =  \G_{\beta, a, c}$ is almost surely compact on $L^2 (\RR_+, \mm)$ with inverse eigenvalues $0 < \Lambda_0 < \Lambda_1 < \cdots$
corresponding to the problem $f  = \Lambda   \G  f   $.
 Further, as $n \rightarrow \infty$ with
$n \sigma \rightarrow c$, the ordered eigenvalues of the scaled one-spiked matrix ensemble 
  $n W_{n, \sigma}$ converge to the collection $\{ \Lambda_k \}$ in the sense of finite dimensional distributions.
  \end{theorem}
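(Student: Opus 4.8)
The plan is to establish the operator limit by first identifying a suitable tridiagonal (or Jacobi-matrix) model for the one-spiked Wishart ensemble $W_{n,\sigma} = B_\sigma B_\sigma^T$, then rescaling the associated eigenvalue problem and showing that the discrete resolvent converges — in an appropriate Hilbert-space sense — to the integral operator $\G_{\beta,a,c}$. Concretely, with $B_\sigma$ as in \eqref{B}, the eigenvalue equation $B_\sigma B_\sigma^T v = \lambda v$ can be recast, after the standard substitution that turns a bidiagonal-squared problem into a three-term recursion, into a discrete Sturm–Liouville problem governed by a random speed measure and a random scale measure built from the $\chi$ variables appearing in $B_\sigma$. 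The single spike enters only in the top-left entry $\sqrt{\sigma}\,\chi_{m\beta}$, and under the scaling $n\sigma \to c$ this produces exactly the extra rank-one term $\tfrac{1}{c}\int_0^\infty f(y)\,\mm(dy)$ in \eqref{IntegralForm}; the un-spiked part of the recursion is precisely the hard-edge model analyzed in \cite{RR}, whose continuum limit yields the double-integral kernel $\int_0^{x\wedge y}\ss(dz)$ against $\mm(dy)$.

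The key steps, in order: (1) Write down the inverse operator $\G_n$ for the discrete problem $n W_{n,\sigma} v = \Lambda^{-1} v$ explicitly as a discrete Green's function; because $W_{n,\sigma}$ is a product of (essentially) bidiagonal matrices with positive entries, this Green's function has a clean product/sum form in terms of partial products of the $\chi$'s, which is the discrete analogue of \eqref{IntegralForm}. (2) Embed the discrete $\ell^2$ space into $L^2(\RR_+,\mm)$ via the usual step-function embedding at lattice spacing $1/n$, and show that the random partial sums of $\log\chi$-type increments, suitably centered, converge (by Donsker / a martingale FCLT, exactly as in \cite{RRV,RR}) to the Brownian exponentials defining $\mm(dx)$ and $\ss(dx)$ in \eqref{speedscale}. (3) Upgrade this to convergence of $\G_n \to \G$ in a sense strong enough to force convergence of eigenvalues: the natural route is to prove that $\G_n$ converges to $\G$ in a Hilbert–Schmidt-type norm on compact sets together with a tightness/tail bound near $\infty$ ensuring no mass escapes, so that the spectra converge. (4) Conclude that $\G$ is a.s. compact — this follows from the explicit kernel, since $\ss$ puts finite mass on compacts and $\mm$ has exponentially decaying total mass when $a > -1$, making the kernel square-integrable against $\mm\otimes\mm$ — and that its ordered inverse eigenvalues $\Lambda_0 < \Lambda_1 < \cdots$ are the limits, in finite-dimensional distribution, of the ordered eigenvalues of $n W_{n,\sigma}$.

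The main obstacle I anticipate is step (3): controlling the operators \emph{uniformly} near the ``soft'' end $x \to \infty$. On compact intervals the convergence is an FCLT plus continuity argument, but the integral operator lives on all of $\RR_+$, and one must rule out spurious low-lying eigenvalues coming from the tail — equivalently, show that the discrete operators $\G_n$ are a tight family and that any subsequential limit is supported on (not merely dominated by) the kernel \eqref{IntegralForm}. The cleanest way to handle this is to exploit the monotonicity/positivity structure: the Green's function is built from positive increments, so one can sandwich $\G_n$ between deterministic comparison operators with controlled exponential decay (coming from the drifts $-(a+1)x$ and $ax$), getting a uniform a.s. bound on $\|\G_n\|$ and on the tail contribution. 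A secondary technical point is the boundary behavior at $x=0$: the spike term $\tfrac1c\int_0^\infty f\,\mm(dy)$ must be shown to emerge as the exact limit of the discrete top-corner contribution — this is a one-dimensional computation with the $\chi_{m\beta}$ variable, $\chi_{m\beta}^2/m \to 1$, and the scaling $n\sigma\to c$, but it is where the precise constant $1/c$ (rather than some other function of $c$) gets pinned down, and it also explains why $c = +\infty$ recovers the Dirichlet/un-spiked hard edge of \cite{RR}.
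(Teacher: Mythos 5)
Your high-level picture is right: the spike enters only through the top-left entry of $B_\sigma$, and under $n\sigma\to c$ it contributes exactly the rank-one term $\frac1c\int f\,\mm$, while the rest of $(nW_{n,\sigma})^{-1}$ is the unspiked hard-edge operator of \cite{RR}. But there is a real gap in how you set up the embedding, and it interacts with the ``main obstacle'' you flag in a way that makes your route harder than the paper's.

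You propose to embed the discrete $\ell^2$ space into $L^2(\RR_+,\mm)$ at lattice spacing $1/n$. That space is built from the random \emph{limiting} measure $\mm$, which is not available at finite $n$; and the natural range of the discrete indices under a $1/n$ lattice is $[0,1]$, not $\RR_+$. The paper instead embeds all matrices into the fixed, deterministic space $L^2[0,1]$, explicitly inverts the bidiagonal $B_\sigma$ (no detour through a tridiagonal/three-term recursion — the bidiagonal inverse has a closed product form, kernel $k_n(x,y)$ in \eqref{r1Kernel2}), and writes $(nW_{n,\sigma})^{-1}=K_n^T K_n+\frac1n(\frac1\sigma-1)\,k_n(0,\cdot)\langle k_n(0,\cdot),\,\cdot\,\rangle$ as a rank-one perturbation of the unspiked Green's operator. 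The entire convergence argument then happens on $[0,1]^2$: the pointwise convergence $k_n\to k_{\beta,a}$ is combined with an iterated-logarithm dominating kernel $K_C(x,y)\propto (1-x)^{-\frac{1+a}{2}}(1-y)^{a/2}e^{C(\phi(x)+\phi(y))}$ with $\phi(z)=(1+\log\frac1{1-z})^{1/2+}$, which is square-integrable on $[0,1]^2$. This gives Hilbert--Schmidt convergence $\|k_n-k\|_{L^2[0,1]^2}\to 0$ almost surely along subsequences, hence norm (and therefore spectral) convergence of the operators. Only after all that is the logarithmic change of variables $x\mapsto 1-e^{-x}$ applied, which is a unitary conjugation carrying the limit $G$ on $L^2[0,1]$ to $\G$ on $L^2(\RR_+,\mm)$. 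So the ``tail at $x\to\infty$'' you worry about never appears as such; it is the behavior of $k_n$ near $y=1$ in $[0,1]$, and that is handled by the dominating kernel, not by comparison operators with exponential drifts.

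Two smaller points. First, you should not need to re-derive the FCLT machinery: the unspiked kernel convergence and domination are quoted verbatim from Lemma~6 of \cite{RR} (Proposition~\ref{PropForThm1} here); the genuinely new content of Theorem~\ref{thm1} is the rank-one term and verifying that $k_n(0,\cdot)\to k(0,\cdot)$ in $L^2$, which falls out of the same estimates. Second, be careful with the constant in the spike term: the paper's decomposition gives coefficient $\frac1n(\frac1\sigma-1)$, and it is the $\frac1{n\sigma}\to\frac1c$ piece that survives while the $-\frac1n$ piece is absorbed into the $\sigma=1$ diagonal of $K_n^TK_n$; if you set this up via a tridiagonal recursion you will need to track where that $-1$ goes.
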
 

What is attractive is that $\G$ is an explicit rank one perturbation of the hard-edge limit operator found in \cite{RR} for the unspiked  case $-$ which is recovered by setting $c = + \infty$ in \eqref{IntegralForm}. Just as in that case, pretending that everything in sight can be differentiated at will, $\G$ is seen to represent the Green's operator for the generator\footnote{In \cite{RR} the generator is denoted by $-\mathfrak{G}$, with $\mathfrak{G}^{-1}$ reserved for the Green's operator. This should't cause confusion here.}  
\begin{equation}
\label{differentialform}
   \mathfrak{L} =  e^{x}  \left( \frac{d^2}{dx^2} - (a + \frac{2}{\sqrt{\beta}} b'(x) ) \frac{d}{dx} \right),
\end{equation}
or $\mathfrak{L} = - \mathfrak{G}^{-1}$.
This can be made precise by recognizing $\mm(dx)$ and $\int^x \ss(dz)$ as the (random) speed measure and scale function for the associated diffusion $t \mapsto X_t$.   In particular, $\mathfrak{L} = \frac{d}{d \mm} \frac{d}{d \ss}$.

Fully specifying $\mathfrak{L}$ requires assigning a boundary condition at the origin.
It is easy to see that any  solution $f \in L^2[\mm]$ of $f(x) = \lambda \G f(x)$ is actually $C^{3/2-}$. Taking one derivative throughout this eigenvalue/eigenfunction identity produces
$$
 f'(0) = \lambda \int_0^{\infty} f(y) \mm(dy) 
$$
upon evaluating both sides at zero. And as we also have that $ f(0) = \frac{\lambda}{c} \int_0^{\infty} f(y) \mm(dy)$, we conclude that
\begin{equation}
\label{1dboundary}
   f'(0) =  c f(0),
\end{equation}
for any $L^2$ eigenfunction.
That is to say, just as in the case soft-edge spiking there is a Robin boundary condition at the origin of the (differential) operator limit.

From a probabilistic point of view  this boundary condition entails adjoining a killing measure ${c} \delta_0(x)$ to the speed-scale description.  Pathwise, $X_t$ is constructed from the more familiar process $ t \mapsto X_t^o$ with the same speed and scale, but with simple reflection at the origin.   With $L_t$ the local time of $X_t^o$ at the origin, $X_t$ is defined to agree with those of $X_t^o$ up to the killing time $\tau$ defined by the conditional distributions
$
           \PP ( \tau > t \, | \, X_{\cdot}^o)  = e^{ - c  L_t},
$ 
at which point the path is absorbed.  From this standpoint $c = +\infty$ denotes instantaneous killing, which is another way to recognize that the standard hard edge diffusion is absorbed on its first passage to the origin. In all cases, the behavior at infinity can be read off from the speed and scale and
depends on whether $a \ge 0$ or $ a < 0$.  For $a \ge 0$, $+\infty$ is an entrance, but not exit, point for the process.  For $a \in (-1, 0)$,
$+\infty$ is both entrance and exit.  In the latter case, it is appropriate to view the process as reflected at infinity.   Detailed background on this sort of construction and boundary classification can be found in Chapter 4 of \cite{IM}.

Moving to $r > 1$, the limit is still a random integral operator, but now with an $r \times r$ matrix kernel, acting on the appropriate $L^2$ space of vector valued functions. Again, as we use the block tridiagonalization procedure of Bloemendal-Vir\'ag, the operator itself has no natural extension off of  $\beta=1, 2,$ or $4$.

 Denote by $\mathbb{F}$ the field of real, complex, or quaternion numbers. By the corresponding $\beta=1, 2,$ or $4$ Brownian motion we mean the $r \times r $ matrix of independent unit $\mathbb{F}$-valued Brownian motions at all off-diagonals entries, and  independent real Brownian motions with diffusion coefficient $\frac{1}{\beta}$ along the diagonal. 
 

\begin{theorem}
\label{thm2}
Take $r > 1$ and $\beta =1, 2$, or $4$.  Define the $r \times r$ matrix processes:  
\begin{equation}
\label{Adefinition}
    d \A_x = 
                   \A_x dB_x + ( - \frac{a}{2 } + \frac{1}{2\beta  } ) \A_x dx,   \quad  \A_0 = I,
\end{equation}
with the appropriate $\beta = 1,2, 4$ matrix Brownian motion ${x \mapsto B_x}$.  Set 
\begin{equation}
\label{MandS}
\mathcal{{M}}_x =   e^{-rx} \mathcal{A}_x^{\phantom{l}} \mathcal{A}_x^{\dagger}, \quad
  \mathcal{{S}}_x  =     (  \A_x^{\phantom{l}} \A_x^{\dagger} )^{-1}
\end{equation}
Then, if $(n \Sigma_r)^{-1}$ converges in norm to a non-negative definite  $C^{-1}$, the limiting  $r$-spiked hard edge operator  $\mathfrak{G}_{r} =   \mathfrak{G}_{r, \beta, a, C}$
reads 
\begin{align}
\label{rIntegralForm}
  \mathfrak{G}_{r} f(x)  =  \int_0^{\infty} \left(  \int_0^{x \wedge  y} \mathcal{{S}}_z dz \right)    \mathcal{{M}}_y f(y) dy 
     +    {C}^{-1} \int_0^\infty  \mathcal{{M}}_y f(y) dy,
\end{align}
and is positive compact on $L^2 [\mathcal{M}] = \{ f \in \mathbb{F}_r: \int_0^{\infty} f^{\dagger}(x) \mathcal{M}_x f(x) dx < \infty \}$ to itself. In particular, the ordered eigenvalues of $n W_{n, \Sigma}$ converge (again in the sense of finite dimensional distribution) to the $L^2$-spectrum $\Lambda_{r,0} < \Lambda_{r,1} < \cdots $ for the problem $f = \Lambda \mathfrak{G}_r f$.
\end{theorem}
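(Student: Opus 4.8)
The argument splits into two essentially independent parts: (i) that $\mathfrak{G}_r$ as written in \eqref{rIntegralForm} is a bona fide positive, compact, self-adjoint operator on $L^2[\mathcal{M}]$ with discrete, simple spectrum; and (ii) that the inverse eigenvalue problem for $n W_{n,\Sigma}$ converges to $f=\Lambda\mathfrak{G}_r f$. The plan is to dispatch (i) by direct stochastic estimates on the multiplicative functional $\mathcal{A}_x$ of \eqref{Adefinition}, and (ii) by the block tridiagonalization of Bloemendal--Vir\'ag (see Section~3), which commutes with $r$-fold spiking, combined with the convergence-of-Green's-operators scheme of \cite{RR,RRV,BV2}, adapted to track the rank-$\le r$ perturbation carried by the spike.

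For part (i), write $\int_0^{x\wedge y}\mathcal{S}_z\,dz=\int_0^\infty \one_{\{z<x\}}\one_{\{z<y\}}\mathcal{S}_z\,dz$; this and $C^{-1}$ are Hermitian and symmetric in the pair $(x,y)$, so $\mathfrak{G}_r$ is self-adjoint on $L^2[\mathcal{M}]$. Setting $g(z)=\int_z^\infty \mathcal{M}_y f(y)\,dy$, one finds
\[
 \langle f,\mathfrak{G}_r f\rangle_{L^2[\mathcal{M}]}\;=\;\int_0^\infty g(z)^\dagger \mathcal{S}_z\, g(z)\,dz\;+\;g(0)^\dagger C^{-1} g(0)\;\ge\;0 .
\]
Since the linear SDE \eqref{Adefinition} keeps $\mathcal{A}_x$ a.s.\ invertible for all $x$, we have $\mathcal{M}_x,\mathcal{S}_x\succ 0$ and $C^{-1}\succeq 0$, so the right-hand side is strictly positive unless $f\equiv 0$; thus $\mathfrak{G}_r$ is strictly positive. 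Compactness follows once the kernel of $\mathfrak{G}_r$, weighted so as to act on $L^2[\mathcal{M}]$, is shown to be a.s.\ square-integrable over $[0,\infty)^2$, and this reduces to quantitative control of $\mathcal{A}_x$ and $\mathcal{A}_x^{-1}$. Applying It\^o's formula to $\log\|\mathcal{A}_x v\|$ and $\log\|\mathcal{A}_x^{-1}v\|$ and invoking the law of the iterated logarithm, the a.s.\ exponential growth rates of $\|\mathcal{A}_x\|^2$ and $\|\mathcal{A}_x^{-1}\|^2$ are finite deterministic Lyapunov exponents of \eqref{Adefinition}; in the hard-edge range $a>-1$ these render $\mathcal{M}_x$ a.s.\ integrable at $+\infty$ (the $e^{-rx}$ prefactor wins) and keep the growth of $\int_0^{x\wedge y}\mathcal{S}_z\,dz$ slow enough for the (weighted) kernel to be a.s.\ Hilbert--Schmidt. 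The behavior at $+\infty$ is handled according to the sign of $a$, exactly as recorded after Theorem~\ref{thm1} (entrance but not exit for $a\ge 0$; both entrance and exit, so reflecting, for $a\in(-1,0)$), the latter demanding the sharper tail bound. Simplicity of the spectrum is standard for this Sturm--Liouville-type structure (uniqueness for the associated first-order system), yielding $0<\Lambda_{r,0}<\Lambda_{r,1}<\cdots$.

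For part (ii), the block tridiagonalization of \cite{BV2} represents $W_{n,\Sigma_r}$, in law and up to conjugation, as $\mathsf{B}\mathsf{B}^\dagger$ with $\mathsf{B}$ block-bidiagonal, its diagonal blocks built from $\chi$-variables and its top-left block alone carrying $\Sigma_r^{1/2}$, generalizing \eqref{B}. Rescaling the minimal eigenvalues by $n$ and embedding the block coordinates on the appropriate lattice in $[0,\infty)$ as step functions in $L^2$ (as in \cite{RR}), the eigenvalue problem for $nW_{n,\Sigma}$ becomes a discrete second-order difference problem whose inverse is an \emph{explicit} discrete Green's operator: a double partial sum built from the $\chi$-blocks --- the lattice analogue of $\int_0^{x\wedge y}\mathcal{S}_z\,dz$ paired against $\mathcal{M}_y$ --- plus a rank-$\le r$ boundary term obtained by inverting the spiked top block, of size governed by $(n\Sigma_r)^{-1}$. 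Realizing the $\chi$-blocks on one probability space by a strong-approximation (Skorokhod) coupling, the discrete matrix functional converges uniformly on compacts to $\mathcal{A}_x$ solving \eqref{Adefinition}, hence to $\mathcal{M}_x$ and $\mathcal{S}_x$; together with $(n\Sigma_r)^{-1}\to C^{-1}$ the boundary term converges to $C^{-1}\int_0^\infty\mathcal{M}_y f(y)\,dy$. A version of the part-(i) tail estimates that is uniform in $n$ then upgrades this to Hilbert--Schmidt convergence in probability of the discrete Green's operators to $\mathfrak{G}_r$; since $\mathfrak{G}_r$ is compact with simple spectrum, the min-max principle forces each ordered eigenvalue of $nW_{n,\Sigma}$ to converge, and the coupling delivers convergence of the whole spectrum to $\{\Lambda_{r,k}\}$ in the sense of finite-dimensional distributions.

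The main obstacle is the scaling-limit step. Because the off-diagonal $\chi$-blocks do not commute, one cannot --- as in the scalar $r=1$ case --- express the relevant discrete product as a product of independent scalars converging to a geometric Brownian motion; instead one needs a matrix martingale central limit theorem for the multiplicative increments of the discrete functional, and the drift $-\tfrac a2+\tfrac1{2\beta}$ in \eqref{Adefinition} is precisely the hard-edge shift $-a/2$ plus an It\^o/noncommutativity correction $1/(2\beta)$ that this limit theorem must reproduce exactly. Interlocked with this is the need for tail bounds that are uniform in $n$ and strong enough at the hard end $x\to\infty$, where the decay of $\mathcal{M}_x$ competes with the Lyapunov growth of $\|\mathcal{A}_x\|^2$ and the $a\ge 0$ versus $a\in(-1,0)$ dichotomy surfaces, together with uniform control of the inverse of the spiked top block. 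Once these estimates are secured, the remaining steps --- Hilbert--Schmidt convergence, then eigenvalue convergence via min-max --- are routine adaptations of \cite{RR,BV2}.
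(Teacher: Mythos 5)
Your overall architecture matches the paper's: block tridiagonalization from \cite{BV2}, embedding the scaled bidiagonal inverse into $L^2$ as a discrete Green's operator, a coupling in which the discrete kernel converges in Hilbert--Schmidt norm, and then eigenvalue convergence from operator-norm convergence. Your explicit quadratic-form identity
$\langle f,\mathfrak{G}_r f\rangle_{L^2[\mathcal{M}]}=\int_0^\infty g^\dagger\mathcal{S}\,g\,dz+g(0)^\dagger C^{-1}g(0)$
is a clean way to see self-adjointness and positivity that the paper leaves implicit; that part is a nice addition.

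There is, however, a genuine gap in your compactness argument. You propose to control the Hilbert--Schmidt norm by combining the separate a.s.\ exponential rates of $\|\mathcal{A}_x\|^2$ and $\|\mathcal{A}_x^{-1}\|^2$. Those rates are respectively $\sim e^{(r-1-a)x}$ and $\sim e^{(r-1+a)x}$ (top Lyapunov exponents of the forward and inverse flows), so the submultiplicative bound gives $\|\mathcal{A}_s^{-1}\mathcal{A}_t\|^2\lesssim e^{(r-1+a)s}e^{(r-1-a)t}$. Feeding this into the Hilbert--Schmidt integral $\int_0^\infty\!\int_s^\infty e^{-rt}\|\mathcal{A}_s^{-1}\mathcal{A}_t\|^2\,dt\,ds$ leaves a factor $\int_0^\infty e^{(r-2)s}ds$, which diverges for every $r\ge 2$ — i.e., for \emph{all} cases covered by the theorem. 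The estimate that actually works must track the two-parameter family $\|\mathcal{A}_s^{-1}\mathcal{A}_t\|$ as a function of the \emph{increment} $t-s$: the paper's Lemma~\ref{lem:ContinuumHSBound} writes the Gram-matrix eigenvalues of $\mathcal{A}_s^{-1}\mathcal{A}_t$ as a Dyson-type interacting diffusion \eqref{eigsfornorm}, bounds the log-eigenvalue gaps path-wise, and deduces $\|\mathcal{A}_s^{-1}\mathcal{A}_t\|^2\lesssim e^{(r-1-a)(t-s)}$ up to sub-exponential corrections. Only with the $(t-s)$-dependence does the $e^{-rt}$ weight win for all $a>-1$, $r\ge 1$.

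A second, milder issue is the discrete-to-continuous step. Your ``matrix martingale CLT'' phrasing is not by itself enough because the discrete kernel must be \emph{dominated}, not just shown to converge pointwise, and the paper's domination deliberately only holds on $\{x,y\le 1-c\log n/n\}$ (Corollary~\ref{cor:DiscreteBound}, built on the explicit Euler representation of Lemma~\ref{lem:Euler}); the remaining $O(\log n/n)$-tail has to be shown negligible separately (Lemma~\ref{lem:ThrowOutTail}). You acknowledge the need for uniform-in-$n$ tail estimates in the abstract, but the specific mechanism — a dominating kernel that breaks down near the hard end, combined with a separate crude estimate for the last $O(\log n)$ discrete steps — is the technical heart of the proof and is not supplied by a generic martingale CLT argument.

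Finally, a small caveat: for $r>1$ the simplicity of the spectrum $\Lambda_{r,0}<\Lambda_{r,1}<\cdots$ is not a generic consequence of ``Sturm--Liouville structure'' (matrix-valued Sturm--Liouville problems can have degenerate spectra); an almost-sure simplicity argument is needed, though the paper itself leaves this implicit as well.
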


Putting $r=1$ in the equations \eqref{Adefinition} and \eqref{MandS} which define the matrix kernel, both $\mathcal{M}_x$ and  $\mathcal{S}_x$ reduce to the speed and scale $(\mm(dx) /dx, \ss(dx) /dx)$ functions from \eqref{speedscale}, as they must. 
Further, the eigenfunctions of $\mathfrak{G}_r$ will again be $C^{3/2-}$ entry-wise, and the inherited boundary condition at zero is
now $f(0) = C f'(0)$ in complete analogy to \eqref{1dboundary}.
On the other hand, for $r >1$ there is no apparent interpretation of $\mathfrak{G}_r^{-1}$ as a diffusion generator.

Note that the statement allows for some number eigenvalues of $C^{-1}$ to vanish. This translates into a Dirichlet condition at the origin for the corresponding coordinates of $f$.

Theorems \ref{thm1} and \ref{thm2} are proved in Sections \ref{section:1spike} and \ref{section:multispike}, respectively.
The proof of Theorem \ref{thm1} is a re-working of the main result of \cite{RR}. By virtue of having to deal with matrix rather than scalar kernels the proof of Theorem \ref{thm2} is  more elaborate, but  the overall strategy remains the same.
 In each case the 
matrix $(n W_n)^{-1}$, with $W_n = W_{n,\sigma}$ or $W_{n, \Sigma}$,  is embedded as an operator  into $L^2[0,1]$ (of functions taking values in 
$\mathbb{R}$ or $\mathbb{F}^r$).   After this embedding, the proofs demonstrate a coupling for which $(n W_n)^{-1}$ converges to an integral operator ($G$ or $G_r$, defined below in \eqref{eq:LimitingOperator1} or \eqref{eq:LimitingOperator2}, respectively) in strong operator norm. Thus, we have convergence of any finite number of eigenvalues and their corresponding eigenvalues (as elements of $L^2[0,1]$) to the eigenvalues/eigenvectors of these ($r$ dependent) $G$ operators. The half-line operators  $\mathfrak{G}$ and $\mathfrak{G}_r$ are related to the $G$ and 
$G_r$  by an explicit change of variables.

\subsection*{Riccati correspondence and PDEs}

The classical Riccati substitution was used in \cite{RRV} to provide a description of  the Tracy-Widom($\beta$) laws in terms of the explosion times of a certain diffusion process.   Applying the same ideas to the spiked soft edge operators, \cite{BV1, BV2} concluded that the space-time generator of that diffusion  yields a completely analytic characterization of the family of spiked/deformed soft edge laws. In brief, the introduction of the variable boundary condition produces the previously missing space-like variable in the corresponding PDE(s).  These PDEs have subsequently been used to confirm a conjectured Painlev\'e formula for the spiked $\beta=4$ Tracy-Widom law form \cite{Wang}, and more recently has led to  Painlev\'e formulas at $\beta =6$ \cite{Rumanov1} (the first appearance of such outside of $\beta=1,2,4$).

Working with the differential forms of $\mathfrak{G}$ or $\mathfrak{G}_r$ yields a similar picture at the spiked hard edge. The rank one  case follows directly from the Riccati diffusion for the ``standard" beta hard edge introduced in \cite{RR}.

\medskip

\noindent
{\bf{Rank one hard edge process}} Define $\PP_{\mu, c}$ to be the measure on paths $q: [\mu, \infty) \mapsto [-\infty, +\infty]$ induced by the (strong) solution of
\begin{equation}
\label{r1q}
    dq_x   =  \frac{2}{\sqrt{\beta}} q_x db_x + \left( (a + \frac{2}{\beta} ) q_x - q^2_x - e^{-x} \right) dx, 
\end{equation}
with initial condition $q_{\mu} =c $ and where $q$ is restarted at $+\infty$  after any explosion to $-\infty$. In particular, the exit and entrance points 
$-\infty$ and $+  \infty$ are joined so that the $q$ paths may be viewed as continuous for all time.

\medskip

The hard-edge($\beta$) laws introduced in \cite{RR}  correspond  to starting  $q$ from the entrance barrier $c=+\infty$.  Note also that the Riccati process there puts the spectral parameter inside the drift: the $e^{-x}$ term in \eqref{r1q} is replaced with $\lambda e^{-x}$. Here we are simply making the change of variable $\lambda = e^{-\mu}$ in order to move this variable into a starting ``time." In any case, it is clear that once $q$ hits zero at any finite time $x$, it immediately becomes and stays negative,
at which point there is a positive probability of exploding to $-\infty$. The spectral counting function is then given by counting the passes to zero of the pieced together process, restarted at $+\infty$ after any explosion.

\begin{theorem} 
\label{r1Riccati}
Let $0 < \Lambda_0 < \Lambda_1 < \cdots$ be the limiting spiked hard edge eigenvalues, the inverse eigenvalues of $\mathfrak{G} = \mathfrak{G}_{\beta, a, c}$, for any $\beta >0, a > -1$ and $c \in (0, +\infty]$. Notate $F_k(\mu, c) = \pr(\Lambda_k > e^{-\mu})$, with $F = F_0$ for simplicity.  Then,
\begin{equation}
 \label{r1F}
      F_k( \mu, c) = \PP_{\mu, c} \left( q_x \mbox{ vanishes at most } k \mbox { times} \right).
 \end{equation}
It follows that $F$ is the unique bounded positive solution of
\begin{equation}
\label{r1PDE}
    \frac{\partial F}{\partial \mu}   +  \frac{2}{\beta} c^2     \frac{\partial^2 F }{\partial c^2}   + ( (a +\frac{2}{\beta}) c - c^2   - e^{-\mu}) 
      \frac{\partial F}{\partial c}  =0,
\end{equation} 
subject to
\begin{equation}
\label{BCs}
   \lim_{\mu \uparrow \infty} F( \mu, c) = 1 \mbox{ for all } c > 0 , \quad  \lim_{c \downarrow 0}  F(\mu, c) = 0 \mbox{ for all } \mu < \infty.
\end{equation}
The higher order distributions $F_k$ solve the same PDE, but with the nested boundary conditions $\lim_{c \downarrow -\infty} F_k(\mu,c) = F_{k-1}(\mu, +\infty)$ in place of $\lim_{c \downarrow 0} F(\mu,c) =0$.
\end{theorem}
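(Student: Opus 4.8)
The plan is to run the classical Sturm--Riccati correspondence for the half‑line operator $\mathfrak{L} = -\mathfrak{G}^{-1}$ and then recognize \eqref{r1PDE} as the Kolmogorov equation of the associated diffusion. First I would pass to the differential form: by Theorem~\ref{thm1} and the discussion around \eqref{differentialform}--\eqref{1dboundary}, the $\Lambda_k$ are the eigenvalues of $\mathfrak{L}\phi = -\Lambda\phi$ under $\phi'(0) = c\,\phi(0)$, equivalently of $\phi'' - (a + \tfrac{2}{\sqrt\beta}b')\phi' + \Lambda e^{-x}\phi = 0$ with that Robin datum. For $\lambda > 0$ let $\phi_\lambda$ be the ($C^{3/2-}$) solution normalized by $\phi_\lambda(0) = 1$, $\phi_\lambda'(0) = c$, and set $q_x = \phi_\lambda'(x)/\phi_\lambda(x)$. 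A direct computation, converting the $b'$-term from its Stratonovich to its It\^o form (which adds $\tfrac{2}{\beta}q\,dx$ to the drift), shows that $q$ solves \eqref{r1q} with $e^{-x}$ replaced by $\lambda e^{-x}$ and started from $q_0 = c$, the zeros of $\phi_\lambda$ being precisely the passages of $q$ to $-\infty$ (after which $q$ is continued at $+\infty$). Substituting $\lambda = e^{-\mu}$ and shifting $x \mapsto x + \mu$ then identifies the law of $x \mapsto q_{x-\mu}$ with $\PP_{\mu,c}$.

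The one genuinely analytic input is the oscillation identity
\[
 \#\{ k : \Lambda_k \le \lambda \} = \#\{ \text{zeros of } \phi_\lambda \text{ in } (0,\infty)\},
\]
together with the elementary observation that between consecutive zeros of $\phi_\lambda$ the process $q$ vanishes exactly once (each critical point of $\phi_\lambda$ is a strict local extremum of the correct sign, since $\lambda e^{-x} > 0$), so that the right-hand side equals the number of times the pieced-together $q$ passes through $0$. For $c = +\infty$ (Dirichlet at the origin) this is the content of \cite{RR}, and its proof there --- monotonicity in $\lambda$ via Sturm comparison, together with the analysis of the singular endpoint at $+\infty$, whose nature depends on whether $a \ge 0$ or $a \in (-1,0)$ --- carries over once the entrance datum $q_0 = +\infty$ is replaced by the finite Robin value $q_0 = c$. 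I expect this adaptation to be the point requiring most care, although, given \cite{RR}, it is largely a matter of tracking the boundary value. Granting it, $F_k(\mu,c) = \pr(\Lambda_k > e^{-\mu}) = \pr\big(\#\{ j : \Lambda_j \le e^{-\mu}\} \le k\big) = \PP_{\mu,c}(q \text{ vanishes at most } k \text{ times})$, which is \eqref{r1F}.

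For \eqref{r1PDE}, note that under $\PP_{\mu,c}$ the map $x \mapsto q_x$ is a time-inhomogeneous diffusion on $[\mu,\infty)$ whose generator at time $x$, acting on functions of $q$, is $\tfrac{2}{\beta}q^2\partial_q^2 + ((a+\tfrac{2}{\beta})q - q^2 - e^{-x})\partial_q$ --- exactly the spatial part of \eqref{r1PDE}. On $\{ c > 0\}$ this operator is non-degenerate with smooth coefficients, so a routine argument (the Markov property yields the dynamic programming identity, which interior parabolic regularity upgrades) shows that $F = F_0$ is a classical solution of \eqref{r1PDE} there and that $x \mapsto F(x,q_x)$ is a bounded martingale until the first vanishing of $q$; Dynkin's formula then gives the equation. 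The condition $\lim_{\mu\uparrow\infty} F(\mu,c) = 1$ is immediate from $\Lambda_0 > 0$ almost surely (Theorem~\ref{thm1}), while $\lim_{c\downarrow 0} F(\mu,c) = 0$ follows because from a small positive start the strictly negative drift ($\approx -e^{-\mu}$ near $q=0$) together with the vanishing diffusion coefficient $\tfrac{2}{\sqrt\beta}q$ drives $q$ through $0$ with probability tending to one. Uniqueness among bounded positive solutions is the usual verification argument: any such $G$ is automatically smooth on $\{c>0\}$ by interior regularity, $x\mapsto G(x,q_x)$ is then a bounded martingale, and stopping it at the first vanishing --- respectively sending $\mu\to\infty$ along the paths that never vanish --- together with the two boundary conditions identifies $G$ with $\PP_{\cdot}(q \text{ never vanishes}) = F$. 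Finally, for the higher $F_k$ the same argument applies, the nested boundary condition reflecting the spectral interlacing: decomposing $\{ q \text{ vanishes at most } k \text{ times}\}$ at the first excursion of $q$ to $-\infty$ reduces matters, on that excursion, to the count for $F_{k-1}$ restarted from the entrance state $+\infty$, which is precisely $\lim_{c\downarrow -\infty} F_k(\mu,c) = F_{k-1}(\mu,+\infty)$ in place of $\lim_{c\downarrow 0}F(\mu,c)=0$ (the $\mu\to\infty$ condition being unchanged), exactly as in the soft-edge treatment of \cite{BV1}.
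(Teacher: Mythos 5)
Your proposal follows essentially the same route as the paper's own (sketch of a) proof: pass to the differential form $\mathfrak{L} = -\mathfrak{G}^{-1}$, obtain the Riccati diffusion $q = \phi_\lambda'/\phi_\lambda$ with the Stratonovich-to-It\^{o} correction, defer the oscillation/counting statement to \cite{RR} and its erratum, read off \eqref{r1PDE} as the space-time generator of $(x,q_x)$, prove uniqueness by the bounded local-martingale argument with the boundary data \eqref{BCs}, and obtain the nested condition for $F_k$ by decomposing at the first explosion to $-\infty$. The paper simply states that the identification of the $q$ diffusion "can be taken verbatim from Theorem 2 of \cite{RR}," so your more explicit description of the substitution and its consequences is a fair expansion of what the paper leaves implicit.

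One caution at the step you yourself flagged as delicate: as literally written, the two intermediate identities --- that $\#\{k : \Lambda_k \le \lambda\}$ equals the number of zeros of $\phi_\lambda$ on $(0,\infty)$, and that this in turn equals the number of vanishings of $q$ --- are not each correct for all $a > -1$. Zeros of $\phi_\lambda$ correspond to explosions of $q$; each explosion is preceded by a vanishing, but a trailing vanishing (a critical point of $\phi_\lambda$ after its last zero) need not be followed by an explosion, so the two counts can differ by one. That off-by-one is exactly the distinction between the Dirichlet-type ($a \ge 0$) and Neumann-type ($a \in (-1,0)$) endpoint behavior at $+\infty$, cf.\ Corollary \ref{cor:Large_a}, and the theorem as stated is pinned to \emph{vanishings}, which is the count that is correct in both regimes. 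The composite identity you use is the right one, and this is precisely the content supplied by the erratum to \cite{RR}; just be aware that neither of your two displayed intermediate equalities should be asserted individually without that case distinction.
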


As a spiked parameter $c$ is to be taken positive, but $q$ can obviously be started from any  $c \in \RR$. In addition, when counting  $k \ge 1$ vanishings it is natural to consider starting points $c \le 0$, since subsequent vanishes can only occur after an explosion to $-\infty$. In this way the definition of $F_k$ is extended to negative $c$ by the right hand side of \eqref{r1F} in a consistent way. For example, $F(\mu, c) := 0$ for $c \le 0$. 

The PDE characterization of $F$ (not spelled out in \cite{RR}) is more or less  immediate from the passage time description \eqref{r1F}.  Rumanov \cite{Rumanov} has already used \eqref{r1PDE} to find formulas for $F$ at $\beta=2,4$ in terms of Painlev\'e III.  That same paper provides new proofs of the $\beta=2,4$ spiked soft edge laws using the analog of the PDE \eqref{r1PDE} derived in \cite{BV1}. It is further worth mentioning that more recently Rumanov \cite{Rumanov1} has found a Painlev\'e II formula at the $\beta = 6$ soft edge (the first known outside $\beta=1,2,4$) once more  using the spiked/PDE picture of \cite{BV1}.

For the multi-spiked case, we introduce the following.

\medskip

\noindent
{\bf{Rank $r>1$ hard edge process }} Define $\PP_{\mu, (c_1, c_2, \dots c_r)}$ to be the measure on (non-intersecting) paths ${\mathbf{q}}: [\mu, \infty) \mapsto [-\infty, \infty]^r$ governed by
\begin{equation}
\label{rrq}
      d q_{i,x} =  \stb q_{i,x} db_i +  \left(  (a + \frac{2}{\beta}) q_{i,x} - q_{i,x}^2 - e^{-r x} +  q_{i,x}  \sum_{j \neq i}  \frac{ q_{i,x} + q_{j,x}}{ q_{i,x}- q_{j,x} }  \right) dx,
\end{equation}
with initial condition $q_{i, \mu} = c_i $, $i=1,\dots, r$ (and convention $c_1\ge c_2 \ge \cdots$).  The explosion/restart now pertains to the lowest 
 particle, and  $\mathbf{q}$ is made continuous on the Weyl chamber for all time by following any  such restart by a cyclic relabeling of the indices.

\medskip

\begin{theorem} 
\label{thm:MultiRiccati}
Now let  $\beta=1,2,$ or $4$ and denote by $0 < \Lambda_{r,0} < \Lambda_{r,1} < \cdots$  the inverse eigenvalues of $\mathfrak{G}_r = \mathfrak{G}_{r, \beta, a, C} $ with $a > -1$ and $C$ itself possessing eigenvalues $c_1, \dots, c_r  \in (0,\infty]^r$.  Then
$F_k(\mu, c_1, c_2, \dots c_r) = \pr(\Lambda_{r,k} > e^{-\mu})$ has the representation: 
$$
  F_{k}(\mu, c_1, \dots, c_r) = \PP_{{\mu}/{r}, (c_1 , \dots, c_r)} \left(\mbox{at most } k \mbox{ zeros among the coordinates of } \mathbf{q}_x  \right).
$$
And so, with  $F = F_0$ and $\psi(c) = (a + \frac{2}{\beta}) c - c^2 - e^{-r \mu} $,
\begin{equation}
\label{rrPDE}
\frac{\partial F }{ \partial \mu}   + \sum_{i=1}^r \left( \frac{2}{\beta}  c_i^2  
\frac{\partial^2 F}{\partial c_i^2}    + \psi(c)  \frac{\partial F}{\partial c_i} \right)
+
   \sum_{i < j}   \left( \frac{c_{i}+ c_j}{ c_{i} - c_{j}} \right) (c_i  \frac{\partial F}{\partial c_i}    -  c_j  \frac{\partial F}{\partial c_j} ) = 0,      
\end{equation}
subject to the appropriate boundary conditions analogous to \eqref{BCs}. The solution is unique up to permutations of the variables
$c_1, c_2, \dots, c_r$. The higher order distributions solve the same PDE, linked together inductively as in $\lim_{c_r \downarrow -\infty} F_k(\mu, c_1, \dots, c_r)$ $ = $ $ F_{k-1}(\mu, +\infty, c_1, \dots, c_{r-1})$.
 \end{theorem}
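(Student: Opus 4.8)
The plan is to mirror, in the matrix setting, the Riccati argument used for Theorem~\ref{r1Riccati}, which itself descends from \cite{RRV, RR, BV1, BV2}. The starting point is the eigenvalue problem $f = \Lambda \mathfrak{G}_r f$ with $\Lambda = e^{-\mu}$, which (upon differentiating the integral identity twice, exactly as in the discussion following Theorem~\ref{thm2}) is equivalent to the matrix Sturm--Liouville problem $\frac{d}{d\mathcal{M}}\frac{d}{d\mathcal{S}} f = -e^{-\mu} f$ on $\RR_+$ with the Robin condition $f(0) = C f'(0)$. Changing variables $x \mapsto rx$ (which accounts for the $\mu/r$ appearing in the statement) and introducing the matrix Riccati variable $p_x = (f' f^{-1})(x)$ along a fundamental matrix solution, one finds that $p_x$ satisfies a matrix SDE whose diagonalization yields the system \eqref{rrq}: the eigenvalues $q_{i,x}$ of $p_x$ diffuse with the stated drift, the repulsion term $\sum_{j\neq i} q_{i,x}(q_{i,x}+q_{j,x})/(q_{i,x}-q_{j,x})$ coming from the standard Dyson-type computation for eigenvalues of a matrix-valued diffusion driven by $\beta=1,2,4$ Brownian motion, and the constant $-a/2 + 1/(2\beta)$ in \eqref{Adefinition} feeding the $(a+2/\beta)q - q^2$ part. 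The boundary condition $f(0) = Cf'(0)$ becomes $p_{\mu/r} = C^{-1}$... and here one must be careful: the eigenvalues $c_i$ of $C$ enter as the initial data $q_{i,\mu/r} = c_i$ for the Riccati eigenvalues, which is consistent since $C$ and $C^{-1}$ are simultaneously diagonalizable and the relabeling $c_1 \ge \cdots \ge c_r$ matches the Weyl-chamber ordering.

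Second, I would establish the oscillation-theoretic dictionary: the number of eigenvalues of $\mathfrak{G}_r$ exceeding $e^{-\mu}$ equals the number of zeros of $\det f$ on $(0,\infty)$ for the solution of the initial value problem, and each such zero of $\det f$ corresponds to exactly one eigenvalue $q_{i,x}$ of $p_x = f'f^{-1}$ passing through $-\infty$ (equivalently one coordinate of $\mathbf{q}$ blowing down and being restarted at $+\infty$). This is the matrix Sturm oscillation theorem; in the $\beta$-ensemble random-operator literature it is exactly the mechanism in \cite{BV2} for the spiked soft edge and in \cite{RR} for the scalar hard edge. Combined with the spectral convergence already furnished by Theorem~\ref{thm2} (finite-dimensional distribution convergence of the eigenvalues of $nW_{n,\Sigma}$ to those of $\mathfrak{G}_r$), this yields the probabilistic representation
$$
F_k(\mu, c_1, \dots, c_r) = \PP_{\mu/r, (c_1,\dots,c_r)}\bigl(\text{at most } k \text{ zeros among the coordinates of } \mathbf{q}_x\bigr).
$$
One subtlety to handle here is that zeros of $\det f$ need not be simple as zeros of individual coordinates — two eigenvalues of $p_x$ could in principle explode simultaneously — but the non-intersection of the $\mathbf{q}$ paths (the repulsion drift is singular and attractive toward collision only through an $O((q_i-q_j)^{-1})$ term that almost surely prevents coincidence, just as for Dyson Brownian motion at $\beta \ge 1$) rules this out almost surely; this is where I expect the most technical bookkeeping, together with justifying that the restart/cyclic-relabeling convention makes $\mathbf{q}$ a well-defined continuous Markov process on the Weyl chamber for all time.

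Third, the PDE \eqref{rrPDE} follows by writing down the backward (space-time) generator of the diffusion $(\mu, \mathbf{q}) \mapsto (\mu, \mathbf{q}_\mu)$ — recall that in the variable $\mu$ we have $q_{i,\mu} = c_i$ as \emph{initial} data, so $\mu$ plays the role of time and differentiating $F_k(\mu, \mathbf{c}) = \EE_{\mathbf{c}}[\cdots]$ in $\mu$ against the generator of \eqref{rrq} gives precisely the displayed equation, with the $\frac{2}{\beta}c_i^2 \partial_{c_i}^2$ from the diffusion coefficient $\frac{2}{\sqrt\beta}q_i$, the $\psi(c)\partial_{c_i}$ from the scalar drift, and the interaction sum from the repulsion. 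Uniqueness up to permutation of the $c_i$ is a maximum-principle argument on the Weyl chamber: $F$ is bounded, the boundary data at $\mu \uparrow \infty$ (all eigenvalues $\to 0$, so $F \to 1$) and at $c_r \downarrow 0$ (a Dirichlet coordinate forcing an extra eigenvalue below any threshold, so $F \to 0$) pin down the solution, exactly analogous to \eqref{BCs}; the higher $F_k$ are then determined inductively by the nested condition $\lim_{c_r \downarrow -\infty} F_k(\mu, c_1,\dots,c_r) = F_{k-1}(\mu, +\infty, c_1, \dots, c_{r-1})$, which reflects that once the lowest particle has explored past $-\infty$ once it reenters at $+\infty$ and the remaining count is governed by $F_{k-1}$ with one fewer ``fresh'' threshold. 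The main obstacle, as noted, is the careful pathwise construction of $\mathbf{q}$ through explosions and the verification that the oscillation count is robust under this restart convention; everything downstream — the generator computation and the maximum principle — is routine given that.
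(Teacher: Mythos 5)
Your oscillation dictionary is the wrong one, and this is a genuine gap. You identify eigenvalues of $\mathfrak{G}_r$ with zeros of $\det f$ and hence with explosions of the $q_i$'s to $-\infty$. But the theorem asserts a count of \emph{passages through zero} of the coordinates of $\mathbf{q}$, and these are distinct events: $q_i \to -\infty$ when the fundamental matrix $F$ becomes singular, whereas $q_i = 0$ when $F'$ becomes singular. The paper's argument goes through the truncated operator $\mathfrak{G}_L$ on $[0,L]$, whose eigenfunctions satisfy $f'(0)=Cf(0)$ and $f'(L)=0$; lifting to the matrix initial value problem $F(0)=I$, $F'(0)=C$, the criterion for $\lambda$ to be an eigenvalue of $\mathfrak{G}_L$ is that $F'(L)$ (not $F(L)$) is singular, equivalently that an eigenvalue of the Riccati variable vanishes at $L$. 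Together with the path-wise monotonicity in $\lambda$ coming from the $-\lambda e^{-rx}$ drift term, this gives the eigenvalue count as the number of zeros, not explosions, among $\{q_{i,x}\}$. The two counts coincide only when $a \ge r-1$; that equivalence is precisely the content of Corollary~\ref{cor:Large_a}, which requires its own proof. As written, your derivation would yield the probabilistic representation from that corollary, not the one in the theorem, and would be false for $a < r-1$.

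A second, smaller omission: the raw Riccati variable $P_x = F'(x)F(x)^{-1}$ is not Hermitian (nor equal in law to its adjoint), so one cannot simply ``diagonalize'' it and appeal to a Dyson-type SDE. The paper salvages this by conjugating, $Q_x = \mathcal{A}_x^{\dagger} P_x \mathcal{A}_x^{-\dagger}$, which \emph{is} symmetric in law and therefore has $r$ real, a.s.\ distinct eigenvalues; the non-isotropy introduced by this conjugation is exactly what makes the subsequent eigenvalue-SDE computation (the derivation of \eqref{eigdiffusion}) nontrivial. Your sketch treats this step as the standard Dyson calculation without accounting for the twist. The remaining ingredients you describe — the space-time generator yielding the PDE, the martingale/maximum-principle uniqueness, the inductive boundary condition for $F_k$ — do match the paper's strategy once the correct oscillation criterion and the conjugated Riccati process are in hand.
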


As indicated above, the value $a=0$ is critical for the $r=1$ operator in terms of the interpretation of the boundary condition at infinity. For $a \in (-1, 0)$ the condition is Neumann, and for $a \ge 0$ one can view the condition as Dirichlet (the diffusion generated by $\mathfrak{G}^{-1}$ has a natural boundary condition there). In the Riccati picture, the difference between Neumann and Dirichlet is whether eigenvalues are counted by passages to $0$ or to $-\infty$. One might expect the same dichotomy at $a=0$ in the case $r >1$. What we can show is the following.

\begin{corollary}
\label{cor:Large_a} 
 In either Theorem \ref{r1Riccati} or  Theorem \ref{thm:MultiRiccati}, if $a \ge r-1$ we can replace the characterization of the $k$-th eigenvalue distribution function in terms of the
probability of (at most) $k$ zeros with that of (at most) $k$ passages to $-\infty$.  
\end{corollary}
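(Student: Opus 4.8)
Proof plan for Corollary~\ref{cor:Large_a}.

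The plan is to reduce the corollary to a pathwise statement about the Riccati diffusions of \eqref{r1q}--\eqref{rrq} and then establish that statement by local drift estimates near the level $0$ and near $-\infty$. Fix $a\ge r-1$ and spike parameters $\vec c\in(0,\infty]^r$, and regard the $r$ coordinates of $\mathbf q$ as continuous (unlabelled) paths on the circle obtained from $[-\infty,+\infty]$ by identifying $-\infty$ with $+\infty$, so that each restart becomes a continuous passage through the point $\infty$; each path then starts on the open arc $(0,\infty)$. The two characterizations in Theorems~\ref{r1Riccati} and~\ref{thm:MultiRiccati} count, respectively, the visits of these paths to the point $0$ and to the point $\infty$, and a.s.\ the number of zeros is finite (by the theorems, together with compactness of $\mathfrak G_r$). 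So it suffices to show that a.s.\ every path visits $0$ exactly as often as it visits $\infty$.

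Two observations hold for all $\beta>0$, $a>-1$, $r\ge1$. First, when a coordinate $q_i$ equals $0$ the interaction term in \eqref{rrq} vanishes (it carries a factor $q_i$, and a.s.\ no second coordinate is at $0$), the diffusion coefficient $\tfrac{2}{\sqrt\beta}q_i$ vanishes, and the drift equals $-e^{-rx}<0$; hence $q_i$ is strictly decreasing through $0$, so every visit of a path to $0$ is a downcrossing from $(0,\infty)$ into $(-\infty,0)$. Second, each visit of a path to $\infty$ is preceded by such a downcrossing, and distinct visits by distinct downcrossings, so the number of visits to $\infty$ never exceeds the number of zeros; thus $\{\text{at most }k\text{ zeros}\}\subseteq\{\text{at most }k\text{ passages to }-\infty\}$ for free, and only the reverse inequality between the two counts is at issue.

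By the first observation, it remains to rule out, for $a\ge r-1$, the two ways a visit to $0$ can fail to be matched by a visit to $\infty$: (I) a path returns to $0$ after entering $(-\infty,0)$, or (II) a path stays in $(-\infty,0)$ forever. For (I), write $\tfrac{q_i+q_j}{q_i-q_j}=-1+\tfrac{2q_i}{q_i-q_j}$, so the interaction term equals $-(r-1)q_i+2q_i^2\sum_{j\ne i}(q_i-q_j)^{-1}$; then the \emph{lowest} coordinate currently lying in a small left-neighbourhood $(-\delta,0)$ of $0$ has drift at most $\big[(a+\tfrac{2}{\beta})-(r-1)\big]q_i+O(q_i^2)-e^{-rx}$ there, since the coordinates below it (being outside $(-\delta,0)$) contribute only a bounded $O(q_i^2)$ term while those above contribute favourably. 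For $a\ge r-1$ the bracket is $\ge\tfrac{2}{\beta}>0$, so this drift is strictly negative on $(-\delta,0)$, and since the diffusion coefficient vanishes at $0$ the coordinate cannot reach $0$ --- compare with the solution of $d\bar q=\tfrac{2}{\sqrt\beta}\bar q\,db+\tfrac12\big[(a+\tfrac{2}{\beta})-(r-1)\big]\bar q\,dx$, which started negative stays strictly negative. A short additional argument (a near-collision of two coordinates moves their midpoint with bounded drift, so if the higher coordinate of the pair approached $0$ so would the lower one, contradicting the previous step) upgrades this to: no coordinate ever returns to $0$ from below. For (II), a.s.\ only finitely many zeros and passages to $\infty$ occur, so past some time each coordinate keeps a fixed sign, the negative ones being the lowest; were any coordinate negative forever, so would be the bottom one $q_r$, and $q_r-q_j<0$ for all $j\ne r$ gives, from the same decomposition, that the drift of $q_r$ is at most $\big[(a+\tfrac{2}{\beta})-(r-1)\big]q_r-q_r^2-e^{-rx}$; comparing $q_r$ from below with the scalar Riccati diffusion carrying this drift --- essentially \eqref{r1q} with $a$ replaced by $a-(r-1)\ge0$, which started negative explodes to $-\infty$ in finite time a.s.\ --- forces $q_r$ to explode, contradicting the absence of further passages to $\infty$.

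Given (I) and (II), along each path the visits to $0$ and the visits to $\infty$ strictly alternate, beginning with a visit to $0$ (the path starts positive) and, by (II), ending --- if the alternation is finite --- with a visit to $\infty$; so the counts agree path by path, and summing over the $r$ paths completes the proof. (For $r=1$ both (I) and (II) collapse to the remark that the drift in \eqref{r1q} is strictly negative throughout $(-\infty,0)$ once $a\ge0$, recovering the sharp threshold.) The main obstacle is step (I): the interaction term is singular when two coordinates are simultaneously near $0$, so the naive comparison must be replaced by the ``lowest coordinate in $(-\delta,0)$'' estimate together with the near-collision argument, and it is precisely the condition $(a+\tfrac{2}{\beta})-(r-1)>0$ extracted there --- met via the clean hypothesis $a\ge r-1$ --- that obstructs reaching the presumably sharp $a\ge0$ when $r>1$.
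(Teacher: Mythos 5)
Your step (II) is the paper's argument, essentially verbatim: once negative, the lowest coordinate $q_r$ has drift bounded above via $\left|\frac{q_r+q_j}{q_r-q_j}\right|\le1$ (valid since $q_j>q_r$), which dominates it by the scalar Riccati process carrying $a-r+1$ in place of $a$, and the $r=1$ explosion argument (Feller's test after the substitution $u=\log(-q)$) then applies provided $a-r+1\ge0$. Your step (I) goes beyond the paper's written proof, which notes ``once at zero, stays negative'' only for the lowest coordinate (immediate, since nothing lies below $q_r$) and does not address whether a non-lowest $q_i$ can return to zero; for such $q_i$ the interaction $\frac{2q_i^2}{q_i-q_j}$ with a close lower $q_j<q_i<0$ is a large positive push toward zero, so the bound $\left|\frac{q_i+q_j}{q_i-q_j}\right|\le1$ is unavailable and the equality of counts is not automatic. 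Flagging this is a correct and useful observation.

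However, the near-collision remark does not yet close step (I). The ``lowest coordinate in $(-\delta,0)$'' comparison is valid only while that identity persists, and a lower coordinate can enter the window; and ``midpoint has bounded drift'' does not keep the midpoint from zero, since its diffusion coefficient $\frac{1}{\sqrt\beta}\sqrt{q_i^2+q_j^2}$ need not vanish. What is really needed is a quantitative statement that $q_i\to0$ forces the gap $q_i-q_j\lesssim q_i^2$, hence $q_j\to0$, together with a recursion down the cluster of near-zero coordinates; as written the sketch leaves this open. One clean way to finish step (I), still using your telescoping identity $\frac{q_i+q_j}{q_i-q_j}=-1+\frac{2q_i}{q_i-q_j}$, is to track $\Gamma=\sum_{i:\,q_i<0}\log(-q_i)$: the singular Dyson contributions between pairs of negative coordinates cancel in $d\Gamma$, the cross terms (one negative, one positive coordinate) contribute at least $-(r-1)$ per negative summand, and the remaining drift contributions $-q_i$ and $-e^{-rx}/q_i$ are positive, so $\Gamma$ has drift $\ge\#\{i:q_i<0\}\cdot(a-r+1)\ge0$ plus a martingale of finite quadratic variation over finite time; localizing on $\{q_r>-M\}$ bounds the largest summand above and hence each $\log(-q_i)$ from below, completing (I).
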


Theorems \ref{r1Riccati} and \ref{thm:MultiRiccati} along with the corollary are proved in Section \ref{section:Riccati}.

\subsection*{The hard-to-soft transition}

In \cite{BV1, BV2} the outcome of the Riccati map is that the one-spiked soft edge distributions are described in terms of the probability that the process
\begin{equation}
\label{softdiffusion1}
  d p_x = \stb db_x + (x - p_x^2) dx,
\end{equation}
   begun at $w$,  
has a given number of explosions to $-\infty$ (after subsequent restarts at $+\infty$).  In particular, with $\lambda_0(\beta, w) < \lambda_2 (\beta, w) < \cdots $ the eigenvalues of the ``deformed" Stochastic Airy Operator, in \cite{BV1} it is proved that
$$
   \pr (\lambda_k(\beta, w) \le \lambda) = \PP_{\lambda, w} ( p_x \mbox{ explodes at most $k$ times}).
$$
Here $\PP_{\lambda, w}$ indicates that $p$ is started from the time/space point $(\lambda, w)$, as should be compared with Theorem \ref{r1Riccati}. Again recall that $-\lambda_0(\beta, +\infty)$  has the Tracy-Widom($\beta$) distribution \cite{RRV}.
 
 Likewise, for $r >1$, the process \eqref{softdiffusion1} is replaced by the joint diffusion $\mathbf{p} = (p_1, \dots,  p_r)$:
 \begin{equation}
 \label{softdiffusion2}
   dp_{i, x} =   \stb db_{i,x} +  \left(rx - p_{i,x}^2 + \sum_{k \neq i} \frac{2}{p_{i,x} -p_{k,x}} \right) dx.
 \end{equation}
 And with now $ \{ \lambda_{r,k}(\beta, \mathbf{w} )\}$, $\mathbf{w} = (w_1, \dots, w_r)$ the eigenvalues of the operator \eqref{SAOr}, among the results of \cite{BV2} is the fact that: for $\beta =1,2,4$,
 $$
    \pr (\lambda_{r,k}(\beta, \mathbf{w}) \le \lambda) = \PP_{\lambda/r, (w_1, \dots, w_r)} ( \mathbf{p} \mbox{ explodes at most $k$ times}).
 $$
 Here again, the process at started from $p_i = w_i$, $i=1, \dots r$ (with the convention that $w_r \le \dots \le w_1$) at the time-like parameter $x = \lambda/r$.  Compare  Theorem \ref{thm:MultiRiccati}.
 
 Back in \cite{RR} we proved that, for all $\beta$, the rescaled (non-spiked) minimal hard-edge eigenvalue converges in distribution to the general Tracy-Widom law as $a \rightarrow \infty$. This was accomplished by showing that the law of passage time to $-\infty$ for a suitably rescaled hard-edge process \eqref{r1q}, begun at $+\infty$, converges to that for the soft-edge process \eqref{softdiffusion1}, also begun at $+\infty$.  Previously, this ``hard-to-soft transition" had been verified for $\beta=1,2,4$ using the explicit Bessel/Airy correlation functions \cite{BF}. In all cases, the intuition is that $a$ is a dimension-difference parameter in the classical Wishart ensemble. Exchanging limits freely, $a \rightarrow \infty$ would correspond to an asymptotically rectangular Wishart, for which the smallest eigenvalue pulls away from the origin and has ``regular" Tracy-Widom fluctuations.
 
Given the above,  it should be no surprise that we can re-capture all the various spiked soft edge laws by rescaling our spiked hard-edge laws. Here we prove the convergence in the sense of finite dimensional distributions of the hard and soft edge point processes (rather than just for the first point as was done in \cite{RR}).


\begin{theorem} 
\label{thm:HardSoft}
Again denote by $\Lambda_0, \Lambda_1, \dots$ the (potentially one-spiked) hard edge eigenvalues. For $a \rightarrow \infty$ and  $c = c(a) \rightarrow \infty$ with $\lim_{a \rightarrow \infty}  a^{-2/3}({c}(a) - a) = w \in (-\infty, \infty]$, there is the convergence in law,
$$
        \left\{  \frac{a^2 - \Lambda_i(\beta, 2a , {c}(a) )} {a^{4/3}} \right\}_{i=0, \dots,k} 
        \Rightarrow \{ \lambda_i(\beta, w) \}_{i=0, \dots, k},        
$$
for any finite $k$. Note the shift $a \mapsto 2a $ in the ``dimension discrepancy" parameter in the $\Lambda_i$.

The same appraisal holds for the $r$-spiked laws. Any finite collection of points from the family $\{a^{2/3} -  a^{-4/3}\Lambda_{r,i}(\beta, 2a, \mathbf{c}(a)) \}$ converges in law to the corresponding $\{ \lambda_{r,k}(\beta, \mathbf{w})\}$, assuming $(\mathbf{c}(a))_i =c_i(a)$ is such that
$\lim_{a \rightarrow \infty}  a^{-2/3}({c}_i(a) - a) = w_i \in (-\infty, \infty]$, $\mathbf{w} = (w_1, \dots, w_r)$.
\end{theorem}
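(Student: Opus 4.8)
The plan is to carry out the convergence at the level of the Riccati diffusions of Theorems~\ref{r1Riccati} and~\ref{thm:MultiRiccati}, upgrading the single--point, $c=+\infty$ argument of \cite{RR} to an arbitrary finite number of points and to a spiked starting value. Since $a\to\infty$, eventually $2a\ge r-1$, so Corollary~\ref{cor:Large_a} (applied with dimension parameter $2a$) lets us read off $\pr\!\big(\Lambda_i(\beta,2a,c(a))>e^{-\mu}\big)=F_i(\mu,c(a))$ as the probability that the hard--edge diffusion \eqref{r1q} (resp.\ \eqref{rrq}), started from $(\mu,c(a))$, makes at most $i$ passages to $-\infty$. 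This is exactly the shape of the soft--edge description of \cite{BV1,BV2}, $\pr(\lambda_i(\beta,w)\le\lambda)=\PP_{\lambda,w}(p\text{ explodes at most }i\text{ times})$, so it suffices to produce a rescaling of the hard--edge diffusion that converges, together with its count of passages to $-\infty$, to the soft--edge diffusion \eqref{softdiffusion1} (resp.\ \eqref{softdiffusion2}).

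Take $r=1$ first. Fix $\xi\in\RR$ and choose $\mu_a$ with $e^{-\mu_a}=a^2-a^{4/3}\xi$, so that $\pr\!\big(\Lambda_i(\beta,2a,c(a))>a^2-a^{4/3}\xi\big)=F_i(\mu_a,c(a))$. In \eqref{r1q} with dimension parameter $2a$ substitute
\[
x \;=\; \mu_a + a^{-2/3}(t-\xi), \qquad q_x \;=\; a + a^{2/3}\,p_t,\qquad t\ge\xi .
\]
The parameter must be $2a$ because this is what places the starting value $c(a)=a+a^{2/3}w+o(a^{2/3})$ at distance $O(a^{2/3})$ from the fixed points of the drift, which at $x=\mu_a$ are $a+\tfrac1\beta\pm\sqrt{(a+\tfrac1\beta)^2-e^{-\mu_a}}=a\pm O(a^{2/3})$ and therefore nearly coalesce; this near--degeneracy is the source of the Airy scaling. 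Taylor expanding $e^{-x}=(a^2-a^{4/3}\xi)e^{-a^{-2/3}(t-\xi)}$, together with the Brownian scaling $b_{\mu_a+a^{-2/3}s}-b_{\mu_a}\overset{d}{=}a^{-1/3}\hat b_s$, shows that the rescaled coordinate obeys $dp_t=\stb\,d\hat b_t+(t-p_t^2)\,dt+o(1)\,dt$ with $p_\xi=(c(a)-a)a^{-2/3}\to w$; that is, $p$ converges to the soft--edge diffusion \eqref{softdiffusion1} started from $(\xi,w)$. For $r>1$ the same substitution $q_{i,x}=a+a^{2/3}p_{i,x}$ collapses the repulsion $q_i\sum_{j\ne i}\frac{q_i+q_j}{q_i-q_j}$ to $\sum_{k\ne i}\frac{2}{p_i-p_k}$ in the limit, and the remaining factors of $r$ match the $[\,\cdot/r,\infty)$ normalizations of \eqref{rrq} and \eqref{softdiffusion2}, so that $\mathbf p$ converges to \eqref{softdiffusion2}.

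From this pathwise statement one extracts convergence of the passage counts, hence of the eigenvalue point processes in the sense of finite--dimensional distributions, by the now--standard arguments of \cite{RRV,RR,BV1,BV2}. Two ingredients are required: (i) convergence in law of the rescaled hard--edge diffusion to the soft--edge diffusion on every compact time interval $[\xi,L]$ -- a routine SDE--stability fact, once one works in the path space of $[-\infty,\infty]$--valued trajectories with explosions glued to restarts at $+\infty$, the coefficients being smooth (linear, resp.\ quadratic) in the state; and (ii) a tail estimate, \emph{uniform in large $a$}, that with probability tending to one as $L\to\infty$ the rescaled process makes no further passage to $-\infty$ after time $L$ -- equivalently, in the original variables, that once $x$ is past the Airy window $q_x$ is trapped near the stable fixed point $q\approx 2a$. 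Given (i), (ii), and the corresponding (already known) tail bound for \eqref{softdiffusion1}/\eqref{softdiffusion2}, the number of passages to $-\infty$ of the rescaled diffusion converges in law to that of the limit, jointly over finitely many starting times $\xi$; since these counts are precisely the eigenvalue counting functions governing $\{\Lambda_i\}$ (resp.\ $\{\Lambda_{r,i}\}$) and $\{\lambda_i(\beta,w)\}$ (resp.\ $\{\lambda_{r,i}(\beta,\mathbf w)\}$) via \eqref{SAOr}, the claimed convergence follows.

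It remains to treat the boundary value $w_i=+\infty$ and to pass from $F_0$ to $F_k$. For $w=+\infty$ one brackets $c_i(a)$ between $a+a^{2/3}w'$ and $a+a^{2/3}w''$ with finite $w'<w''$, squeezes using the monotonicity of the hard--edge eigenvalues in the boundary datum $c$ (resp.\ $C$) and of the soft--edge eigenvalues in $\mathbf w$, both inherited from the operators, and then lets $w',w''\uparrow\infty$; the higher--order laws are obtained by propagating the convergence through the nested boundary conditions relating $F_k$ to $F_{k-1}$ in Theorems~\ref{r1Riccati} and~\ref{thm:MultiRiccati}. The main obstacle is ingredient (ii): establishing the no--late--explosion estimate \emph{uniformly} as $a\to\infty$. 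Heuristically $e^{-x}$ becomes negligible and $q$ is swept to $2a$, but excluding a rare late downward excursion of $q$ through the unstable fixed point near the origin requires a barrier or Lyapunov argument that does not degenerate with $a$ -- the hard--edge counterpart of the tail control underlying the soft--edge Riccati picture, and where the bulk of the work lies.
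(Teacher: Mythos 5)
Your proposal follows the same strategic plan as the paper's proof: reduce to explosion counts via Corollary~\ref{cor:Large_a}, rescale the Riccati diffusion \eqref{r1q} (resp.\ \eqref{rrq}) so that it converges to the soft-edge diffusion \eqref{softdiffusion1} (resp.\ \eqref{softdiffusion2}), argue convergence of the martingale problems, and then convert path convergence into convergence of passage-time counts. Your space scaling $q=a+a^{2/3}p$ and the time substitution (shifting by $\mu_a$ with $e^{-\mu_a}=a^2-a^{4/3}\xi$) are exactly the paper's change of variables \eqref{pathsub}, and your explanation of why the dimension shift $a\mapsto 2a$ is the right normalization (so that the fixed points of the drift near $q\approx a$ nearly coalesce) is correct and illuminating.

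However, there are two places where the proof is genuinely incomplete, not just compressed.

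First, the ``ingredient (ii)'' that you flag as the main obstacle is not proven, and it is not a throwaway technicality. The relevant estimate is not primarily the one you state (no further passage to $-\infty$ after a large time $L$) but rather the approximation of $\tau_{-\infty}$ by the passage time to a finite barrier $b$, \emph{uniformly} as $a\to\infty$. Because $\tau_{-\infty}$ is a discontinuous functional of the path, the weak convergence $\eta\Rightarrow p$ alone does not give convergence of explosion times; one needs
\[
\lim_{b\to-\infty}\Bigl[\PP_{\lambda,w}\bigl(\tau_{-\infty}(p)-\tau_b(p)>\epsilon\bigr)\vee\sup_{a\gg1}\PP_{x_0,\eta_0}\bigl(\tau_{-\infty}(\eta)-\tau_b(\eta)>\epsilon\bigr)\Bigr]=0,
\]
which the paper proves by applying It\^o's formula with a smooth function $f$ agreeing with $1/x$ for $x<-10$, together with optional stopping, to show that once the scaled process falls below $b$ it explodes to $-\infty$ within time $O(1/|b|)$, uniformly in large $a$. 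Your proposal gestures at a Lyapunov/barrier argument but does not carry it out, and without it the passage from weak path convergence to convergence of passage counts does not go through.

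Second, for $r>1$ you remark that the interaction term $q_i\sum_{j\ne i}\frac{q_i+q_j}{q_i-q_j}$ ``collapses'' to $\sum_{j\ne i}\frac{2}{p_i-p_j}$, but this pointwise convergence is not uniform near the boundary of the Weyl chamber, so the coefficient convergence needed for Stroock--Varadhan-type stability is not immediate. The paper handles this by working up to the stopping time $\tau_\epsilon=\inf\{x:\eta_{i,x}-\eta_{j,x}\le\epsilon\text{ for some }i<j\}$, showing $\tau_\epsilon\to\infty$ in probability as $\epsilon\to0$ for both the scaled hard-edge process and the soft-edge limit (using non-crossing, hence restricting to $\beta\ge1$), and then invoking the localized version of the martingale-problem convergence theorem. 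Your draft does not mention this; as written, your step (i) is not ``routine'' when $r>1$.

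On the positive side, your treatment of $w=+\infty$ by bracketing and monotonicity, and the inductive propagation to the higher-order laws $F_k$, are consistent with the paper's argument (the paper phrases the latter via convergence of the iterated passage-time measures $\kappa$, but the content is the same). The bottom line is that the architecture matches the paper's, but the two technical closure steps above are where the real work is, and both are left open in the proposal.
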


\begin{remark} 
\label{rem:HtoS}
Taking the point of view that $\{\Lambda_{r, i}\}$
and $\{\lambda_{r,i} \}$ are defined via the diffusions \eqref{rrq} and \eqref{softdiffusion2}, that is, without reference to the limit operators, the hard-to-soft convergence in the $r$-spiked case takes place for all $\beta \ge1$. 
\end{remark}

The proof of Theorem \ref{thm:HardSoft} makes up Section \ref{sec:HardSoft}.

\subsection*{Supercritical scaling}

It is now well understood that, at the soft edge, if the spikes are too large (or ``supercritical") the fluctuations of $\lambda_{max}$ are effectively Gaussian.  More precisely, for $r=1$ there is a normal limit law for the largest eigenvalue, while for $r > 1$ the rescaled largest eigenvalue converges to that for an $r \times r$ copy of the corresponding invariant Gaussian ensemble. For $\beta=2$ this phenomena is included in the results of \cite{BBP}. It can also be derived by a perturbation argument in the deformed soft-edge operators.

While there is no critical point for the hard edge, it is clear that the analog of supercritical spiking is, at least for $r=1$,  to consider the $c \rightarrow 0$ limit of the operator $\G_{\beta,a, c}$ after the fact.  In the multi-spiked case we take the simplest version of this, setting $C = cI$ with $c \rightarrow 0$. For $r=1$ we find:

\begin{corollary} 
\label{supercrit}
As $c \rightarrow 0$,  
$  \frac{1}{c} \Lambda_0(\beta, a, c) \Rightarrow \frac{1}{\beta}\chi^2_{{\beta}(a+1)}  $
in distribution.
\end{corollary}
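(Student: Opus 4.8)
The plan is to isolate the rank-one part of $\G = \G_{\b,a,c}$, which dominates as $c\downarrow 0$, and then invoke the Dufresne identity to identify the surviving random variable. First I would write $\G_{\b,a,c} = \G_{\b,a,\infty} + \tfrac1c P$, where $P$ is the bounded, rank-one, positive, self-adjoint operator $Pf = \one\,\nip{\one,f}_{\mm}$ on $L^2(\RR_+,\mm)$ and $\one$ is the constant function. Note $\|P\| = \|\one\|_{\mm}^2 = \int_0^\infty \mm(dx)$, which is finite and strictly positive almost surely because $a>-1$ makes $e^{-(a+1)x - \stb b(x)}$ integrable on $\RR_+$; in particular $P$ is genuinely bounded on $L^2(\mm)$ with its only nonzero eigenvalue equal to $\|\one\|_{\mm}^2$. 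Since $\G_{\b,a,\infty}$ is compact (Theorem \ref{thm1} with $c=+\infty$), Weyl's perturbation bound for the top of the spectrum gives
\begin{equation*}
  \Big| \mu_0(\G_{\b,a,c}) - \tfrac1c \|\one\|_{\mm}^2 \Big| \;\le\; \| \G_{\b,a,\infty}\| \;<\; \infty \quad \text{a.s.},
\end{equation*}
where $\mu_0(\cdot)$ denotes the largest eigenvalue and $\mu_0(\tfrac1c P) = \tfrac1c\|\one\|_{\mm}^2$. Because $\Lambda_0(\b,a,c) = 1/\mu_0(\G_{\b,a,c})$, multiplying through by $c$ and letting $c\downarrow0$ on the common probability space carrying $b$ yields
\begin{equation*}
   \frac1c \,\Lambda_0(\b,a,c) \;\longrightarrow\; \Big( \int_0^\infty e^{-(a+1)x - \stb b(x)}\,dx \Big)^{-1} \qquad \text{almost surely,}
\end{equation*}
hence also in distribution.

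It then remains to identify the law of $J^{-1}$ with $J := \int_0^\infty e^{-(a+1)x - \stb b(x)}\,dx$. Using the symmetry $b \stackrel{d}{=} -b$ and the Brownian rescaling $\stb b(x) \stackrel{d}{=} 2\,\hat b(x/\b)$ for a standard Brownian motion $\hat b$, the change of variables $x = \b u$ gives
\begin{equation*}
  J \;\stackrel{d}{=}\; \b \int_0^\infty e^{\,2(\hat b(u) - \mu u)}\,du, \qquad \mu := \tfrac{(a+1)\b}{2} > 0 .
\end{equation*}
The classical Dufresne identity states $\int_0^\infty e^{2(\hat b(u)-\mu u)}\,du \stackrel{d}{=} (2\gamma_\mu)^{-1}$ with $\gamma_\mu$ a $\mathrm{Gamma}(\mu,1)$ variable, so $J \stackrel{d}{=} \b/(2\gamma_\mu)$ and $J^{-1}\stackrel{d}{=} \tfrac2\b\,\gamma_\mu = \tfrac1\b\,(2\gamma_\mu)$. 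Since $2\gamma_\mu \stackrel{d}{=}\chi^2_{2\mu}$ and $2\mu=\b(a+1)$, combining with the previous display gives $\tfrac1c\Lambda_0(\b,a,c)\Rightarrow \tfrac1\b\,\chi^2_{\b(a+1)}$, as claimed.

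There is no substantive obstacle here beyond bookkeeping; the two points that warrant a word of care are (i) confirming that $P$ is bounded on $L^2(\mm)$ with $\|\one\|_{\mm}^2>0$ almost surely, so that the perturbation bound legitimately applies and the limiting random variable is a.s.\ finite and positive (both follow from $a>-1$), and (ii) tracking the constants through the Brownian rescaling so the Dufresne parameter emerges as exactly $\mu = \b(a+1)/2$; an off-by-$\b$ or off-by-$2$ slip here would alter the degrees of freedom in the limiting $\chi^2$. One could also remark that this $r=1$ statement is the scalar specialization of the matrix-Dufresne picture used for the general supercritical regime.
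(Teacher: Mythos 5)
Your proof is correct and follows essentially the same route as the paper: isolate the rank-one perturbation $\tfrac{1}{c}P$ (integration against $\mm$), observe that it dominates the compact $\G_{\beta,a,\infty}$ as $c\downarrow 0$, and identify $(\int_0^\infty\mm)^{-1}$ via Dufresne. The only cosmetic difference is that you state the perturbation step via Weyl's inequality applied to $\G_{\beta,a,\infty}+\tfrac1c P$, while the paper rescales and sends $c\,\G_{\beta,a,c}\to P$ in operator norm---these are trivially the same bound---and your constant-tracking in the Dufresne step (arriving at $\mu=\beta(a+1)/2$, hence $\chi^2_{\beta(a+1)}$) is exactly right.
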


Of course the $a \rightarrow \infty$ scaling limit of a $\chi^2_{\beta (a+1)}$ is Gaussian, recovering the soft-edge supercritical limit law.
The proof is standard perturbation theory: as $c \rightarrow 0$, plainly  $c \, \G_{\beta, a,c}$  tends (with probability one) to the rank one operator defined by integration against $\mm(dx)$. That is, the ground state is the constant function with corresponding eigenvalue $\int_0^{\infty} \mm(dx)$, and the only question then is to understand that law.  But an identity due to Dufresne \cite{Duf} shows that
$$
    \int_0^{\infty}   e^{ 2( b_x -  \mu x) } dx  \sim \frac{1}{ 2 \gamma_{\mu}}, 
$$
for $\gamma_{\mu}$ a gamma random variable of the indicated parameter, which is exactly what is needed. 

For $r > 1$ an identical perturbation argument gives: as $c \rightarrow 0$, 
\begin{equation}
\label{multisupcrit}
  c^{-1}  \Lambda_{r,0}(\beta, a, cI) \Rightarrow \lambda_{min} \left(  \int_0^{\infty} \M_x dx \right)^{-1} , 
\end{equation}
with  $\M_x$ is defined in \eqref{MandS}.  Recognizing $ \frac{1}{\beta}\chi^2_{{\beta}(a+1)} $ as a $1\times 1$ Wishart law prompts 
a conjectured matrix Dufresne identity: $\mathrm{spec} ( \int_0^{\infty} \M_x dx)$ should have joint law corresponding to the $r \times r$ inverse  $\beta =1,2,$ or $4$ Wishart distribution.  
This has subsequently been proved \cite{RV}, and we can state the following.

\begin{corollary} (See Corollary 10 of \cite{RV})
\label{cor:multisupcrit1}
 As $c \rightarrow 0$, $  c^{-1}  \Lambda_{r,0}(\beta, a, cI_r)$ converges in distribution to the law of the minimal eigenvalue of the $(r, r+a)$ 
 real, complex, or quaternion Wishart ensemble.
\end{corollary}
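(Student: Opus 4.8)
The plan is to obtain the corollary from the perturbation principle already underlying \eqref{multisupcrit}, fed into the matrix Dufresne identity of \cite{RV}. Abbreviate $\mathfrak{G}_r = \mathfrak{G}_{r,\beta,a,cI}$ and read off from \eqref{rIntegralForm} the splitting
\[
  \mathfrak{G}_r \;=\; G_0 \;+\; \tfrac1c\, T_0, \qquad
  G_0 f(x) = \int_0^\infty\!\Big(\int_0^{x\wedge y}\!\S_z\,dz\Big)\M_y f(y)\,dy,\qquad
  T_0 f = \int_0^\infty \M_y f(y)\,dy ,
\]
so that $G_0$ is exactly the $C^{-1}=0$ specialization of the operator of Theorem \ref{thm2}, and $T_0$ is the $c$-independent, finite-rank remainder. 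Theorem \ref{thm2} (with $C^{-1}=0$) guarantees that $G_0$ is almost surely a bounded — indeed compact — positive operator on $L^2[\M]$, and $T_0$ is plainly bounded there too, so $c\,\mathfrak{G}_r = c\,G_0 + T_0 \to T_0$ in operator norm, almost surely, as $c\downarrow 0$. Both $c\,\mathfrak{G}_r$ and $T_0$ are self-adjoint and non-negative on $L^2[\M]$, so $\lambda_{\max}$ is $1$-Lipschitz in operator norm and therefore $\lambda_{\max}(c\,\mathfrak{G}_r)\to\lambda_{\max}(T_0)$ a.s.; since $\Lambda_{r,0}(\beta,a,cI)$ is by definition the reciprocal of $\lambda_{\max}(\mathfrak{G}_r)$, we conclude $c^{-1}\Lambda_{r,0}(\beta,a,cI)\to 1/\lambda_{\max}(T_0)$ almost surely, hence in distribution.

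Next I would pin down $\lambda_{\max}(T_0)$. Put $A = \int_0^\infty \M_x\,dx$, which is almost surely positive definite because $\M_x\succ 0$ for every $x$. The range of $T_0$ is the $r$-dimensional space of $\M$-constant functions; decomposing $L^2[\M]$ into that subspace and its $\langle\cdot,\cdot\rangle_{\M}$-orthogonal complement, one verifies directly that $T_0$ kills the complement and sends a constant $f\equiv v$ to the constant $f\equiv Av$. Hence the nonzero spectrum of $T_0$ coincides with $\mathrm{spec}(A)$, so $1/\lambda_{\max}(T_0) = 1/\lambda_{\max}(A) = \lambda_{\min}(A^{-1})$, which both re-derives \eqref{multisupcrit} and reduces the corollary to identifying the distribution of $\lambda_{\min}\big((\int_0^\infty\M_x\,dx)^{-1}\big)$.

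The last step is to invoke Corollary 10 of \cite{RV}: the joint law of the eigenvalues of $A = \int_0^\infty\M_x\,dx$ equals the joint law of the eigenvalues of the inverse of an $(r,r+a)$ real, complex, or quaternion ($\beta=1,2,4$) Wishart matrix. Equivalently, $A$ has the same ordered spectrum, in law, as $V^{-1}$ for such a Wishart matrix $V$, whence $\lambda_{\min}(A^{-1}) = 1/\lambda_{\max}(A)$ has the law of $1/\lambda_{\max}(V^{-1}) = \lambda_{\min}(V)$, the minimal eigenvalue of the $(r,r+a)$ real/complex/quaternion Wishart ensemble. Chaining this with the almost sure (hence distributional) limit of the first two steps gives the corollary. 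As a consistency check one recovers Corollary \ref{supercrit} at $r=1$, since the $(1,1+a)$ ensemble has sole eigenvalue $\frac1\beta\chi^2_{\beta(a+1)}$.

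The one genuinely heavy ingredient is the matrix Dufresne identity of \cite{RV}; reproving it would be the real obstacle, but the corollary is explicitly stated as a consequence of that reference, so it suffices to cite it. Everything else is light: the needed operator-norm control of $G_0$ is the $C^{-1}=0$ case of Theorem \ref{thm2}, the $1$-Lipschitz dependence of $\lambda_{\max}$ on a self-adjoint operator is elementary, and the rank-$r$ structure of $T_0$ that yields $\lambda_{\max}(T_0)=\lambda_{\max}(\int_0^\infty\M_x\,dx)$ is a one-line computation. The only mild care is to ensure the convergence $c\,\mathfrak{G}_r\to T_0$ holds in a topology strong enough to carry finitely many eigenvalues across, for which operator-norm convergence is more than enough.
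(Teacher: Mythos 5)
Your proposal is correct and follows essentially the same route as the paper: the paper deduces \eqref{multisupcrit} by the "identical perturbation argument" (writing $c\,\mathfrak{G}_r = cG_0 + T_0 \to T_0$ in norm, identifying the nonzero spectrum of the rank-$r$ limit $T_0$ with $\mathrm{spec}(\int_0^\infty \M_x\,dx)$), and then obtains the corollary by citing the matrix Dufresne identity of \cite{RV}, exactly as you do. You have merely spelled out the details (Lipschitz dependence of the top eigenvalue on the operator, the self-adjoint action of $T_0$ on the $\M$-constants) that the paper leaves to the reader; the paper's Section~\ref{sec:Appendix} also gives an independent Fredholm-determinant verification at $\beta=2$ with integer $a$, but that is supplementary.
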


For completeness, in Section \ref{sec:Appendix} we include a proof of Corollary \ref{cor:multisupcrit1} for $\beta=2$ and integer $a$ 
using the Fredholm determinant approach of \cite{BBP}.

\section{Single-spike operator limit}
\label{section:1spike}

After a conjugation by the diagonal matrix of alternating signs, the matrix model has the form $n W_{n, \sigma} = n B_{\sigma} B_{\sigma}^T$
in which the upper-bidigaonal $B_{\sigma}$ has entries
$$
   \sqrt{\sigma} \frac{\chi_{\beta(n+a)}}{\sqrt{\beta}}, \, \frac{\chi_{\beta(n-1+a)}}{\sqrt{\beta}}, \cdots, \mbox{ and } 
       - \frac{\chi_{\beta(n-1)}}{\sqrt{\beta}}, \, - \frac{\chi_{\beta(n-2)}}{\sqrt{\beta}}, \cdots, 
$$
along the main, and first off-diagonal, respectively.\footnote{In \cite{RR} we also flipped the indexing by conjugating as in 
$B \mapsto S B S^{-1}$ with $S_{ij} = (-1)^i \delta_{i+j - n-1}$. With the benefit of hindsight it is more convenient not to make this extra move.}
Keep in mind that, despite possibly repeated indices, all variables are independent.

Next, any matrix $A = a_{i,j} \in \RR^{n \times n}$ can be embedded 
into $L^2[0,1]$ without changing the spectrum by setting: for $x_i = i/n$ for $i = 0, 1, \dots, n$ and $f \in L^2[0,1]$,
$$
     (A f) (x)  :=  \sum_{j=1}^n   a_{i,j} n \int_{x_{j-1}}^{x_{j}}  f(x) dx, \quad  \mbox{when }  x_{i-1} \le x < x_{i}.
$$
Then, by the inversion formula for bidiagonal matrices, as an integral operator,
\begin{equation}
\label{r1Kernel1}
   (\sqrt{n} B_{\sigma})^{-1}(x,y) =  \left( \one_{x > x_1}(x)+\one_{x \le x_1}(x) \frac{1}{\sqrt{\sigma}} \right) k_n(x,y),
\end{equation}
with the discrete (upper triangular) kernel
\begin{equation}
\label{r1Kernel2}
   k_n(x,y) = \frac{  \sqrt{\beta n }}{\chi_{\beta(n+a - i)}} \prod_{k=i+1}^{j} \frac{\chi_{\beta(n-k)}}{\chi_{\beta(n+a-k)}}  \,
   \mathbf{1}_{\Gamma_{ij}},
\end{equation}
in which   
\begin{equation}
\label{r1Kernel3}
    \quad \Gamma_{ij} = \{ 0 \le  x \le y  \le 1:  x \in (x_{i-1}, x_i], \,   y \in (y_{j-1}, y_j]  \}.
\end{equation}
Note that on diagonal  (if $i=j$) the product in \eqref{r1Kernel2} is understood to equal one.

Putting \eqref{r1Kernel1} together with its transpose we can write out  the action of $(n W_{n,\sigma})^{-1}$
on an $f\in L^2[0,1]$:
\begin{align}
\label{fullr1operator}
 (n W_{\sigma})^{-1}f(x)  & = \, \int_{x_1}^{1} \int_0^1  k_n(y,x)  k_n(y,z)   f(z) \,dz \, dy  \ +  \ 
  \frac{1}{n\sigma} k_n(0,x) \int_0^1 k_n(0,y) f(y) \, dy  \\
  &   :=      K_n^T K_n f (x) +  \frac{1}{n}\left(\frac{1}{\sigma}- 1 \right) k_n(0,x) \langle k_n(0,\cdot), f\rangle.  \nonumber
\end{align}
Here $K_n$ is the integral operator with kernel $k_n$ (defined in \eqref{r1Kernel2}, \eqref{r1Kernel3}) and $\langle \cdot, \cdot \rangle$ is just the $L^2$ inner product.

The point is that \eqref{fullr1operator} represents $(n W_{n,\sigma})^{-1}$ as a rank one perturbation of $K_n^T K_n$. As the latter is exactly the non-spiked model (just set $\sigma =1$), the relevant convergence/compactness properties of these operators have already been studied in \cite{RR}. Everything we need  to take norm limits throughout the right hand side of \eqref{fullr1operator} is summarized in the following.

\begin{proposition} (Lemma 6 of \cite{RR}) 
\label{PropForThm1}
Let $K_{\beta,a}$ be the integral operator on $L^2[0,1]$ with kernel
\begin{equation}
\label{limitkernel}
   k_{\beta,a}(x,y) := (1-x)^{-\frac{1+a}{2}}  \exp{ \left[  \int_x^y \frac{db_z}{\sqrt{\beta (1-z)}} \right]} 
      (1-y)^{a/2} \, \one_{x< y}.
\end{equation}
Here $z \mapsto b_z$ is a standard Brownian motion. Then, for any sequence of the operators $K_n$, $n \rightarrow \infty$, there is a subsequence $n' \rightarrow \infty$ 
and suitable probability space on which $K_{n'} \rightarrow K_{\beta, a}$ almost surely  in Hilbert-Schmidt norm.
\end{proposition}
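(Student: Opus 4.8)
\noindent\textit{Proof proposal.} The matrices are already embedded into $L^2[0,1]$, so the content is purely the operator convergence $K_n\to K_{\beta,a}$. Granting (as the argument will show) that $K_{\beta,a}$ is almost surely Hilbert--Schmidt, it suffices to produce a subsequence $n'$ and a common probability space on which
$$
  \iint_{0<x<y<1}\bigl|k_{n'}(x,y)-k_{\beta,a}(x,y)\bigr|^2\,dx\,dy\;\longrightarrow\;0 .
$$
I would split $[0,1]^2$ into the bulk $\{y\le 1-\delta\}$ and the hard-edge tail $\{y>1-\delta\}$, treating the bulk by a coupling plus dominated convergence and the tail by a uniform-in-$n$ integrability estimate, letting $n\to\infty$ first and $\delta\downarrow 0$ last.

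\textbf{Coupling and pointwise convergence on the bulk.} For $x\in(x_{i-1},x_i]$ and $y\in(y_{j-1},y_j]$, take logarithms in \eqref{r1Kernel2} and write $\log k_n(x,y)=D_n(i,j)-\widetilde{\log\chi}_{\beta(n+a-i)}+\sum_{k=i+1}^{j}\xi_k$, where $D_n(i,j)$ is the mean, tildes denote centering, and the variables $\xi_k=\log\chi_{\beta(n-k)}-\log\chi_{\beta(n+a-k)}-\EE[\,\cdot\,]$ are independent and mean zero (all chi-variables in \eqref{B} being independent). From $\EE\log\chi_m^2=\log 2+\psi(m/2)$ and $\psi(m/2)=\log(m/2)-m^{-1}+O(m^{-2})$ one gets, by a Riemann-sum estimate, $D_n(i,j)\to-\tfrac{1+a}{2}\log(1-x)+\tfrac a2\log(1-y)$, which reproduces the deterministic prefactors of \eqref{limitkernel}; the centered prefactor term has variance $\asymp\tfrac1{\beta n(1-x)}\to 0$ and drops out. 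Since $\mathrm{Var}(\log\chi_m)=\tfrac14\psi'(m/2)\sim\tfrac1{2m}$, the quadratic variation $\sum_{k=i+1}^{j}\mathrm{Var}(\xi_k)$ converges to $\int_x^y\frac{dz}{\beta(1-z)}$, exactly the variance of the Gaussian integral in \eqref{limitkernel}. Embedding the partial-sum process $z\mapsto\sum_{k\le zn}\xi_k$ into a single Brownian motion $b$ by a Skorokhod embedding and checking a Lyapunov condition on the $\xi_k$, I would obtain --- after passing to a subsequence --- a probability space on which $\sum_{k=i+1}^{j}\xi_k\to\int_x^y\frac{db_z}{\sqrt{\beta(1-z)}}$ uniformly on compact subsets of $\{x<y<1\}$, hence $k_n\to k_{\beta,a}$ pointwise a.s.\ there.

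\textbf{From pointwise to Hilbert--Schmidt on the bulk.} On $\{y\le 1-\delta\}$ the deterministic prefactors are bounded, and, using the exact identity $\EE[(\chi_{\beta(n-k)}/\chi_{\beta(n+a-k)})^{2}]=\tfrac{\beta(n-k)}{\beta(n+a-k)-2}$ together with its higher-moment analogues (whose products telescope up to harmless $O(1/n)$ corrections), the remaining product factors have moments of some order $>2$ bounded uniformly in $n$. Uniform integrability combined with the a.s.\ pointwise limit upgrades this to $\iint_{y\le 1-\delta}|k_n-k_{\beta,a}|^2\to 0$ a.s., by a vector-valued dominated convergence argument.

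\textbf{The tail, and the main obstacle.} It remains to show that $\iint_{y>1-\delta}|k_{\beta,a}|^2$ and $\sup_n\iint_{y>1-\delta}|k_n|^2$ both tend to $0$ as $\delta\downarrow 0$; this is precisely where the hypothesis $a>-1$ enters. For $k_{\beta,a}$, the time change $T=\tfrac1\beta\log\tfrac{1-x}{1-y}$ turns the integral into a constant times $\int\!\!\int e^{2b_{T(y)}-(a+1)\beta\,T(y)}(\cdots)$, and the strong law of large numbers (resp.\ law of the iterated logarithm) for Brownian motion makes $\int_0^\infty e^{2b_T-(a+1)\beta T}\,dT$ almost surely finite exactly when $a>-1$; this in particular certifies that $K_{\beta,a}$ is a.s.\ Hilbert--Schmidt. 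For $k_n$ I would bound $\iint_{y>1-\delta}|k_n|^2$ by the corresponding double sum $n^{-2}\sum_{i<j,\,y_j>1-\delta}$ of products of the ratios $\tfrac{\beta(n-k)}{\beta(n+a-k)-2}$ --- controlled in expectation when $(a+1)\beta>2$, and pathwise via a law-of-large-numbers estimate on $\log$ of the chi-products in general --- and show that it is $o_\delta(1)$ uniformly in $n$, weighing the $(1-x)^{-(1+a)}(1-y)^{a}$-type growth against the counting measure $n^{-2}\sum_{i<j}$. Establishing that no Hilbert--Schmidt mass escapes to the hard edge $x,y\to 1$ uniformly along the sequence is the step I expect to be delicate; the remainder is the routine Skorokhod-coupling-plus-dominated-convergence package.
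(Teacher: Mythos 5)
Your coupling step is essentially the paper's first step (compare the process-level convergence in \eqref{discrete_conv}), and your variance bookkeeping for the centered log-chi sums is correct; the limiting quadratic variation $\tfrac1\beta\log\tfrac{1-x}{1-y}$ matches that of $\int_x^y\frac{db_z}{\sqrt{\beta(1-z)}}$. But the argument has a genuine gap, and it sits exactly where you flag ``delicacy.''

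On the bulk $\{y\le 1-\delta\}$, moment bounds of order $>2$ give uniform integrability in expectation, but what is needed for \emph{almost sure} Hilbert--Schmidt convergence is a pathwise dominating kernel on a fixed realization; ``uniform integrability'' in the probabilistic sense does not, by itself, license the pathwise dominated-convergence step you invoke. More seriously, the tail $\{y>1-\delta\}$ is the heart of the lemma and is left open. The moment route you propose requires $(a+1)\beta>2$, which fails over most of the allowed range $a>-1$, $\beta>0$. The pathwise alternative you sketch --- a law of large numbers for $\log$ of the chi-products --- only recovers the deterministic drift $\tfrac a2\log\tfrac{1-y}{1-x}$; it says nothing about the fluctuation of the centered sum $\sum_k\xi_k$ as $y\uparrow 1$, and that fluctuation is precisely what decides whether $\iint k_n^2$ stays finite pathwise near the hard edge. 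The missing ingredient is a \emph{quantitative law of the iterated logarithm} for the partial sums $\sum_{nx\le k\le ny}\log\tfrac{\chi_{\beta(n-k)}}{\chi_{\beta(n+a-k)}}$, uniform in the spatial endpoints.

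The paper's proof avoids the bulk/tail split entirely. It establishes a single dominating estimate of the form
\begin{equation*}
   k_n(x,y)\ \le\ \frac{C_n}{\sqrt{1-x}}\left(\frac{1-y}{1-x}\right)^{a/2}e^{C_n(\phi(x)+\phi(y))}\,\one_{x<y},\qquad \phi(z)=\Bigl(1+\log\tfrac{1}{1-z}\Bigr)^{1/2+},
\end{equation*}
valid on all of $\{x<y\}$ with $C_n$ a \emph{tight} random sequence (this is the quantitative LIL). Two facts close the argument: (i) the limit kernel $k_{\beta,a}$ satisfies the same bound with a fixed a.s.\ finite $C$, by the Brownian LIL; (ii) the bounding kernel is square-integrable on $[0,1]^2$ for every $a>-1$ and $\beta>0$, since $e^{C\phi(z)}$ grows slower than any power of $(1-z)^{-1}$. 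One then extracts a subsequence along which $C_{n'}$ converges, realizes the convergence almost surely on a suitable space, and applies genuine (pathwise) dominated convergence with this one dominating kernel. Your plan identifies the correct obstacles (the coupling, the role of $a>-1$, the hard edge) but substitutes an LLN-plus-moment bound for the LIL-plus-tight-constant mechanism, and that substitution does not carry the proof through.
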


\begin{proof}[Proof of Theorem \ref{thm1}] Since it serves as a model for the multi-spike case, we first sketch the main steps behind
the above proposition. Here we drop the $(\beta,a)$ subscripts on $k$ and $K$.

It is straightforward to show that there is the convergence
\begin{equation}
\label{discrete_conv}
   \frac{  \sqrt{\beta n }}{\chi_{\beta(n+a - i)}}  \Rightarrow \frac{1}{\sqrt{1-x}},
   \qquad  \prod_{k=i+1}^{j} \frac{\chi_{\beta(n-k)}}{\chi_{\beta(n+a-k)}} \Rightarrow \left(\frac{1-y}{1-x} \right)^{a/2} e^{\int_x^y 
   \frac{db_z}{\sqrt{\beta (1-z)}} } 
\end{equation}
as processes. So  $k_n(x,y) \rightarrow k(x,y)$ pointwise on $[0,1]^2$. With a bit more effort it is shown in \cite{RR} that there
is the estimate
\begin{align}
\label{discrete_tightness}
   k_n(x,y) & \le  \frac{C_n}{\sqrt{1-x}}   \left(\frac{1-y}{1-x} \right)^{a/2}  e^{C_n  (\phi(x) + \phi(y))}  \one_{x< y}  \\
                 & := K_{C_n}(x,y) 
    \quad \mbox{ with } \ \phi(z) = \left(1+\log\frac{1}{1-z}\right)^{1/2+},   \nonumber
\end{align}
for $C_n$ a tight random constant.\footnote{\cite{RR} reports an exponent of $3/4$ in the bounding function 
$\phi$, but the proof there shows that this can be replaced by anything greater than $1/2$.}
This comes down to a quantitative version of the law of the iterated logarithm for the random sum $\sum_{nx \le k \le ny} \log  \frac{\chi_{\beta(n-k)}}{\chi_{\beta(n+a-k)}}$.  And  then by the law of the iterated logarithm for Brownian motion, we also have that the limit kernel $k$ satisfies the same bound,
but with $C_n$ replaced by a fixed ({\em i.e.}, independent of $n$) random constant $C< \infty$

Now we can choose a subsequence $n' \rightarrow \infty$ over which $C_{n'}$ converges, and then realize this convergence as almost sure convergence on some probability space. With this set-up we have both $k_{n'}(x,y) \le K_C(x,y)$ and $k(x,y) \le K_C(x,y)$ after possibly adjusting $C$. But $ \int_{[0,1]^2} K_C^2(x,y) dx dy < \infty$, and this supplies the domination needed to conclude that 
$\| k_n - k \|_{L^2[0,1]^2} \rightarrow 0$.

The new object appearing in the spiked operator \eqref{fullr1operator} is the projection onto the function $k_n(0,x)$. But the estimates just employed show this also converges almost surely (in the same sub-sequential coupling) to the projection onto $k(0,x)$. Hence, 
if  $\sigma n \rightarrow c$, we have that $(n' W_{n', \sigma})^{-1}$  converges in norm to $G$ defined by
\begin{align}
\label{eq:LimitingOperator1}
G f (x)  = &   (K^T K f)(x)  + \frac{1}{c} \, k(0,x) \langle k(0,\cdot), f\rangle  \\
   = & \int_0^1 (1-x)^{a/2} e^{- \int_0^x  \frac{db_t}{\sqrt{ \beta (1-t)}}} \left(
                                              \int_0^{x \wedge y}  \frac{  e^{ 2 \int_0^z  \frac{db_t}{\sqrt{ \beta (1-t)}}}}{(1-z)^{a+1}} dz \right)
                                               (1-y)^{a/2} e^{- \int_0^y  \frac{db_t}{\sqrt{ \beta (1-t)}}} f(y)  dy  \nonumber \\
       &     +   \frac{1}{c}  (1-x)^{a/2} e^{- \int_0^x  \frac{db_t}{\sqrt{ \beta (1-t)}}} \int_0^{\infty}    (1-y)^{a/2} e^{- \int_0^y  \frac{db_t}{\sqrt{ \beta (1-t)}}} f(y)  dy, \nonumber                               
\end{align}
yet again on the chosen probability space. Norm convergence implies that we have convergence of finite parts of the spectrum
(this is standard, but Theorem 1 of \cite{RR} includes a proof in the present context). 
Finally, since any such extracted subsequence produces the same norm limit, we have that any finite collection of eigenvalues and eigenvectors of 
$n W_{n,\sigma}$ (the latter as elements of $L^2[0,1]$) converge jointly in law over the full sequence $n \rightarrow \infty$.

Just to indicate the map from $G$ to $\mathfrak{G}$, one checks by Brownian time change that
$$
    k(0,1-e^{-x}) e^{-x/2} = e^{ - \frac{1}{\sqrt{\beta}} b_x  - \frac{a+1}{2} x}  =   \sqrt{\frac{\mm(dx)}{dx }},
$$
where the equalities are in law and $b_x$ is a new Brownian motion. After the same change of variables, the kernel of 
$K^T K$ satisfies
$$
 ( k^T k)(1-e^{-x}, 1-e^{-y}) e^{-x/2} e^{-y/2} = \int_0^{x \wedge y} \ss(dz) \,  \sqrt{\frac{\mm(dx)}{dx } } \sqrt{\frac{\mm(dy)}{dy }}.
$$
Together this means that $G$  acting on functions $f \in  L^2[0,1]$ possesses the same eigenvalues as  
$\G$ acting functions  on $\mathfrak{f}(x) = $ $ f(1-e^{-x}) e^{ \frac{a}{2}x + \frac{1}{\sqrt{\beta}} b_x} \in L^2[ [0,\infty), \mathfrak{m}]$.
\end{proof}

\section{Multi-spiked operator limit}
\label{section:multispike}

The starting point is the block bidiagonalization procedure of 
Bloemendal-Vir\'ag, introduced in \cite{BV2} to analyze multiple spikes at the soft edge.  
Let  $\Sigma = \Sigma_r \oplus I_{n-r}$ with  $\Sigma_r$ an $r \times r$ positive definite matrix, 
and consider  $ X \Sigma^{1/2} $  with $X$ an $n \times (n+a)$ Gaussian matrix  with independent unit real, complex, or quaternion entries.
 The key ingredient from \cite{BV2} is the existence  of  $U$, $V$ with $UU^{\dagger}= VV^{\dagger} = I$  so that $ U X  V$ is ``upper block bidiagonal" with $r \times r$ blocks. Denoting by $D_k$, $k=1, \dots \lceil n/r \rceil $ and $O_k$, $k = 1, \dots, \lceil n/r \rceil-1$ the diagonal and immediately above diagonal blocks, it holds that
\begin{equation}
\label{ondiag}
     (D_k)_{ij} = \left\{ \begin{array}{ll}  \frac{1}{\sqrt{\beta}} \chi_{\beta (n + a -r(k-1) -i+1)} & \mbox{if } i=j, \\
                                                                    g_{ij} & \mbox {if } j> i,  \\
                                                                      0        & \mbox{otherwise},  \end{array} \right.
\end{equation}
and
\begin{equation}
\label{offdiag}
     (O_k)_{ij} = \left\{ \begin{array}{ll}  \frac{1}{\sqrt{\beta}} \chi_{\beta (n-rk -i +1)} & \mbox{if } i=j, \\
                                                                    g_{ij} & \mbox {if } j< i,  \\
                                                                      0        & \mbox{otherwise}. \end{array} \right.
\end{equation}
The $\chi's$ are again $\chi$ random variables of the indicated parameter, but now the $g$'s are $\mathbb{F}$-valued unit Gaussians (which prevents a natural extension to general beta). 
 As usual, despite repeated notation, all random variables in sight are independent. Note also that the final blocks may need to be truncated in the obvious way (whenever $r$ fails to divide $n$).  
 
 What is important is that the outcome of applying the above procedure to $ X \Sigma^{1/2}$ rather than to just $X$ is simply to replace $D_1$ with $ D_1 \Sigma_r^{1/2}$.  This defines a block bidiagonal random matrix $B = B_{\Sigma^{1/2}}$  such that $W_{n,\Sigma} = B B^{\dagger}$ has the same eigenvalues as  $X \Sigma X^{\dagger}$.

\subsection{Identifying the limit}

The program follows the $r=1$. The scaled matrix $(\sqrt{n} B_{\Sigma^{1/2}})^{-1}$ is imbedded into 
the appropriate $L^2$ space.  The block structure dictates that this should be the vector valued functions $f$: $[0,1] \mapsto 
\mathbb{F}_r$  with square-norm $\| f \|^2 = \int_{[0,1]}   f(x)^{\dagger} f(x)  dx < \infty$.

One can again use the (block) bi-diagonal structure to compute the inverse explicitly. In conjunction with a conjugation by a matrix of alternating signed $r\times r$ identity matrices (which has the effect of replacing each $O_k$ with $-O_k$), the action of   $(\sqrt{n} B_{\Sigma^{1/2}})^{-1}$ can be identified with that of the discrete matrix kernel:
\begin{equation}
\label{discreterkernel}
   k_{n,r}(x,y) =  \frac{\sqrt{n}}{r}  {D}_i^{-1}   \left( \prod_{k=i+1}^j O_{k} D_k^{-1} \right) \,  \mathbf{1}_{i\le j}, \quad
        \mbox{ for } x \in [x_{i-1}, x_i), \, y \in [y_{j-1}, y_j).
\end{equation}
Here $x_i = \frac{ri}{n}$, $y_j =\frac{rj}{n}$ and $j \le [ n/r ] $.\footnote{Throughout  $[\cdot]$ denotes the (appropriate) integer part.}
 The point being that the discretization scale is now $\triangle = \frac{r}{n}$, which accounts for the $\frac{1}{r}$ prefactor on \eqref{discreterkernel}.
This object left multiplies functions $f(y) \in \mathbb{F}_r$ (prior to integration in the $y$ variable). Compare the $r=1$ kernel \eqref{r1Kernel2}.

Next, setting 
\begin{equation}
{A}_n(x) = \prod_{k= 1}^{  [ nx/r ]}   O_k D_{k+1}^{-1},
\label{eq:An}
\end{equation} 
we can write the kernel \eqref{discreterkernel} in a streamlined fashion, 
\begin{equation}
\label{discreterkernel2}
  k_{n,r}(x,y) =  \frac{ \sqrt{ n}}{r} {D}_{ [ \frac{n}{r} x ] }^{-1} {A}_n(x)^{-1} {A}_n(y)  \mathbf{1}_{0\le x<y\le1},
\end{equation}
granted some ambiguity in the indexing. This matrix process $x \mapsto A_n(x)$ satisfies the discrete equation
\begin{equation}
\label{discreteAnEq0}
   A_n( x + \Delta) - A_n(x) = A_n(x)  \left( {O}_{[ n x / r ] +1} {D}_{ [ n x / r ] + 2}^{-1}   - {I} \right),
\end{equation}
with the convention that $A_n(0) = I$, the $r\times r$ identity. Estimating moments of the entries of the $(O D^{-1} - I)$
multiplier, it is not difficult to convince oneself  that $x \mapsto A_n(x), x \in [0,1]$ converges as a process to
 $x \mapsto {A}_x$ defined as the (unique) solution of the It\^o equation:
\begin{equation}
\label{Aeq}
    d A_x =  \frac{1}{\sqrt{r (1-x)}} A_x dB_x +
    \left( -  \frac{a}{2 r(1-x)} + \frac{1}{2\beta r(1-x) } \right) A_x dx, \quad A_0 = I.
\end{equation}
Here $B_x$ is an $r \times r$ matrix Brownian motion introduced above. Each diagonal is a real Brownian motion with diffusion coefficient $\frac{1}{\beta}$, each
off-diagonal is a unit $\mathbb{F}$-valued Brownian motions, and all coordinate processes are independent. This structure is simply a consequence of the blocks $(D_k, O_k)$ having (real) $\chi$-variables along the diagonal and $\mathbb{F}$-valued Gaussians otherwise.

Moving along, it is easier still to prove the process level convergence $\frac{1}{\sqrt{n}} {D}_{ \lceil \frac{n}{r} x \rceil } \Rightarrow \sqrt{1-x} I$. The (preliminary) conclusion would be that $k_{n,r}$ converges in law to
\begin{equation}
\label{limitrkernel}
   k_r(x,y) =  \frac{1}{r \sqrt{1-x}} {A}_x^{-1} {A}_y^{\phantom{1}}   \mathbf{1}_{0\le x<y\le1},
\end{equation}
in the uniform-on-compacts topology of $[0,1) \times [0,1)$. Hence, granting the existence of a positive definite
${C} = \lim_{n \rightarrow \infty} n \Sigma_r$, the candidate operator limit of $(n W_{\sigma})^{-1}$ is:
\begin{align}
\label{eq:LimitingOperator2}
G_r f(x)   = & \frac{1}{r^2}   \int_0^1 \int_0^{x \wedge  y} {A}_x^{\dagger}  \left( \frac{1}{1-z} {A}_z^{-\dagger} {A}_z^{-1}  \right) {A}_y f(y) dz dy  + \frac{1}{r}
   {A}^{\dagger}_x  {C}^{-1}   \int_0^1 {A}_y f(y) dy,
\end{align}
again acting on $f \in L^2([0,1] \mapsto \mathbb{F}_r)$. This is the analog of \eqref{eq:LimitingOperator1}. 

Finally, to go from
$G_r$ to the representation of $\mathfrak{G}_r$ in the statement of Theorem \ref{thm2} is through a similar change of variables employed at $r=1$:
now $1-x \mapsto e^{-rx}$. In particular, defining $\A_x = A_{1-e^{-rx}},$ $x \in [0, \infty)$ we have that
\begin{equation}
\label{NAeq}
    d \A_x =  
                   \A_x dB_x + ( - \frac{a}{2 } + \frac{1}{2\beta  } ) \A_x dx, \quad \A_0 = I,
\end{equation}
with a new matrix $\mathbb{F}$-Brownian motion $B_x$, compare  \eqref{Adefinition}. Then, similar to the one-dimensional case, after this substitution one may symmetrize with respect to $L^2[\mathcal{{M}}]$. Recall that $\mathcal{M} = e^{-rx} \A_x \A_x^{\dagger}$ and
 $\mathfrak{f} \in  L^2[\mathcal{{M}}]$ if  $\int_0^\infty ( \mathfrak{f}^\dagger \M \mathfrak{f})(x) dx < \infty$.  In particular, view the transformed
 \eqref{eq:LimitingOperator2} as acting of functions $\mathfrak{f}$ of the form
$ \A_x^{-\dagger} f(1-e^{-rx}) e^{rx/2}$ which reside in $L^2[\mathcal{{M}}]$ by construction.
This sends eigenvectors $f$ of $G_r$ into eigenvectors $\mathfrak{f}$ of $\mathfrak{G}_r$ with the corresponding eigenvalues unchanged.



\subsection{Outline of the proof}

The convergence of the spectrum requires some form of norm convergence  of the operators $K_{n,r}$ to $K_r$ (these denoting the integral operators associated with the kernels $k_{n,r}$ and $k_r$ of \eqref{discreterkernel2} and \eqref{limitrkernel}). As in the proof of Theorem  \ref{thm1} (recall the argument surrounding Proposition \ref{PropForThm1}) things follow readily given that we can show:

\begin{theorem}
\label{thm2crux} The exists a probability space on which all $k_{n,r}$  and $k_r$ are defined and, given any sequence $n \uparrow \infty$, a subsequence $n' \uparrow \infty$ such that 
\begin{equation}
    \int_0^1 \int_0^1 \| k_{n',r}(x,y) - k_r(x,y) \|^2 dx dy \rightarrow 0,
\end{equation}
with probability one. 
Here $\| \cdot \|$ is the (matrix) operator norm.
\end{theorem}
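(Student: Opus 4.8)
The plan is to mirror the scalar argument behind Proposition~\ref{PropForThm1} step for step, the new difficulty being that the multiplicative structure in \eqref{discreterkernel2} and \eqref{limitrkernel} is now an ordered product of non-commuting $r\times r$ matrices rather than a product of scalar ratios. First I would record the pointwise (in fact uniform-on-compacts) convergence $k_{n,r}(x,y)\to k_r(x,y)$: this follows from the process-level convergence $A_n(\cdot)\Rightarrow A_\cdot$ of the discrete recursion \eqref{discreteAnEq0} to the It\^o equation \eqref{Aeq}, together with $\tfrac{1}{\sqrt n}D_{[nx/r]}\Rightarrow\sqrt{1-x}\,I$, both obtained by standard moment estimates on the entries of $O_kD_{k+1}^{-1}-I$ (a functional CLT for a triangular array). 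By Skorokhod embedding, pass to a probability space where a chosen subsequence converges almost surely in the uniform-on-compacts topology of $[0,1)^2$.

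The heart of the matter is the domination. The analogue of the scalar bound \eqref{discrete_tightness} must control $\|A_n(x)^{-1}A_n(y)\|$ for $x<y$, and the key point is that the multiplicative noise in \eqref{Aeq} (or its discrete version) has covariance $\tfrac{1}{r(1-z)}$, so that $\log\|A_n(y)A_n(x)^{-1}\|$ behaves like a time-changed matrix Brownian motion with clock $\int_x^y\frac{dz}{r(1-z)}\sim\log\frac{1}{1-y}$ near the endpoint. The plan is to extract, via It\^o's formula applied to $\|A_x^{-\dagger}A_x^{-1}\|$ (equivalently the top and bottom Lyapunov-type exponents of $\A$), a submartingale/supermartingale sandwich and then a Burkholder--Davis--Gundy plus maximal-inequality argument giving, for a tight random constant $C_n$,
\begin{equation}
\label{eq:matrixbound}
  \|k_{n,r}(x,y)\| \;\le\; \frac{C_n}{\sqrt{1-x}}\Bigl(\frac{1-y}{1-x}\Bigr)^{a/2}\,
  e^{\,C_n(\phi(x)+\phi(y))}\,\one_{x<y},\qquad
  \phi(z)=\Bigl(1+\log\tfrac{1}{1-z}\Bigr)^{1/2+},
\end{equation}
exactly as in \eqref{discrete_tightness}, and with the same bound holding for the limit $k_r$ (with $C_n$ replaced by a fixed random $C$, via the matrix law of the iterated logarithm / a Levy-type modulus for $B_x$). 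The one genuinely new ingredient relative to \cite{RR} is controlling the inverse $A_n(x)^{-1}$ uniformly in $n$ and $x\in[0,1)$; here I would use that $\det A_n(x)$ is itself a product of scalar quantities with good lower-tail bounds (its log is a sum with explicit drift), so that $\|A_n(x)^{-1}\|\le \|A_n(x)\|^{r-1}/|\det A_n(x)|$ reduces the inverse estimate to norm and determinant estimates already in hand.

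Once \eqref{eq:matrixbound} is established, the conclusion is routine: along the almost-surely-convergent subsequence both $\|k_{n',r}(x,y)\|$ and $\|k_r(x,y)\|$ are dominated by the square-integrable (on $[0,1]^2$) function on the right of \eqref{eq:matrixbound} after adjusting $C$, so dominated convergence upgrades the pointwise convergence to $\int_0^1\!\int_0^1\|k_{n',r}(x,y)-k_r(x,y)\|^2\,dx\,dy\to 0$ almost surely, which is the assertion of Theorem~\ref{thm2crux}. I expect the main obstacle to be precisely the uniform two-sided (norm and inverse-norm) control of the non-commutative matrix product $A_n(\cdot)$ near the singular endpoint $x=1$, i.e.\ proving the tightness of the constant $C_n$ in \eqref{eq:matrixbound}; the pointwise convergence and the dominated-convergence wrap-up are comparatively standard adaptations of the $r=1$ case.
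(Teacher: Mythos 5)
Your proposal captures the correct high-level architecture (pointwise/process convergence of $A_n$ to $A$, a tight dominating kernel, Skorokhod coupling, dominated convergence), and this is indeed the skeleton of the paper's argument. But there are two concrete gaps, both at the heart of the domination step, and both are exactly the places where the matrix case genuinely departs from $r=1$.

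First, the claimed dominating exponent is wrong for $r>1$. Your bound \eqref{eq:matrixbound} asserts $\|A_n(x)^{-1}A_n(y)\|\lesssim\bigl(\tfrac{1-y}{1-x}\bigr)^{a/2}$ "exactly as in \eqref{discrete_tightness}." This is the scalar exponent and it is not achievable once $r>1$. The eigenvalues of $\A_t^{\dagger}\A_s^{-\dagger}\A_s^{-1}\A_t$ satisfy the repelling system \eqref{eigsfornorm}, and the repulsion term $\lambda_i\sum_{j\ne i}\frac{\lambda_i+\lambda_j}{\lambda_i-\lambda_j}$ contributes an additional drift of order $(r-1)\lambda_1$ to the top eigenvalue. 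Working this through (Lemma \ref{lem:ContinuumHSBound} and Corollary \ref{cor:Norms}) gives $\|A_x^{-1}A_y\|\le C\bigl(\tfrac{1-y}{1-x}\bigr)^{-\frac12+\frac{a+1}{2r}-\epsilon}$, with exponent strictly smaller than $a/2$ for $r>1$ — the top singular value of the non-commutative product genuinely grows faster than the scalar. This is not a looseness a sharper BDG/LIL argument can recover; it is a real effect of the Lyapunov-exponent spreading, and the paper correspondingly uses the weaker, $r$-dependent bound and checks the square-integrability condition $\epsilon<\frac{a+1}{2r}$ by hand.

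Second, and more seriously as a structural omission, no uniform-in-$n$ dominating kernel exists up to $y=1$, and your proposal does not account for this. The paper's Corollary \ref{cor:DiscreteBound} only establishes the domination for $y\le 1-c\,\tfrac{\log n}{n}$, and a separate argument (Lemma \ref{lem:ThrowOutTail}) shows that the contribution of the tail $\{y>1-c\,\tfrac{\log n}{n}\}$ to $\iint\|k_{n,r}\|^2$ vanishes in probability. The reason the scalar strategy breaks is that the discrete factors $O_kD_{k+1}^{-1}$ involve $(\tilde d-\tilde G)^{-1}$ where the diagonal $\tilde d$ is built from $\chi_{\beta(n+a-r(k-1)-i+1)}$ and the off-diagonals $\tilde G$ are $O(1)$ Gaussians; when $n-rk=O(1)$ these blocks can be nearly singular and $\|O_kD_{k+1}^{-1}\|$ has no uniform control. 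In the scalar case the corresponding object is the ratio $\chi_{\beta(n-k)}/\chi_{\beta(n+a-k)}$, which remains well-behaved all the way to $k=n$, which is why \eqref{discrete_tightness} holds on all of $[0,1]^2$ and a cutoff was never needed at $r=1$. Your suggestion to control $A_n(x)^{-1}$ via $\det A_n(x)$ runs into exactly this problem in the tail and does not sidestep it. Both the corrected exponent and the cutoff-plus-tail estimate are needed before the dominated-convergence wrap-up you describe can be run.
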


\medskip

{\em Step 1} is to prove  (Lemma \ref{lem:ContinuumHSBound} of Section \ref{sec:LimitOperator}) that the limit kernel $k_r(x,y) =  \frac{1}{r\sqrt{1-x}} A_x^{-1} A_y \one_{x \le y}  $ is almost surely Hilbert-Schmidt  (showing that $\mathfrak{G}_r$ has the properties claimed in Theorem \ref{thm2}).

\medskip

{\em Step 2} constructs a (random) dominating kernel, on a restricted domain. We show that, for large enough $c$ 
there is a kernel $k_{r,c}(x,y)$ such that: with a tight random sequence $C_n$, 
\begin{equation}
\label{step21}
     k_{n,r}(x,y) \le C_n k_{r,c}(x,y)  \mbox{ for } 0 \le x \le y \le 1-  c \frac{\log n}{n},
\end{equation}
where $k_{r,c}(x,y)$ satisfies
\begin{equation}
\label{step22}
     \int_0^1 \int_0^1 || k_{r,c}(x,y) ||^2 dx dy < \infty.  
 \end{equation}
See Corollary \ref{cor:DiscreteBound} of Section \ref{sec:drectattempt}. This step also relies heavily on Lemma \ref{lem:Euler} of Section \ref{sec:Euler} which represents the product $A_n(x)$ as a fairly explicit (though approximate) Euler scheme for the limiting $x \mapsto A_x$. Along the way we conclude that $k_{n,r}(x,y) \rightarrow k_r(x,y)$ pointwise.

\medskip

{\em Step 3} uses Step 2 as input to show the rest of the range of integration can be neglected:
\begin{equation}
\label{step3}
 \iint\limits_{ {0 \le x \le y \le 1},  \, { y \ge 1- c \frac{\log n}{n} } } \| k_{n,r}(x,y) \|^2 \, dy dx  \rightarrow 0
\end{equation}
 in probability (Lemma \ref{lem:ThrowOutTail} of Section \ref{sec:ThrowOutTail}). 
 
From here the proof follows the course of the $r=1$ case. The only real wrinkle  is the requirement of the cutoff (off the last $O(\log n)$ discrete time steps) in the domination \eqref{step21} (compare this to the bound \eqref{discrete_tightness} used at $r=1$). Tracking the growth of the matrix kernels is just more involved (we also make due with a suboptimal bounding kernel $k_{r,c}$ in terms of its behavior near $x,y = 1$, again compare \eqref{discrete_tightness}). 

Anyway, one is  now free to choose a subsequence and a probability space on which \eqref{step3} takes place almost surely and the bound \eqref{step21} holds almost surely with the tight sequence $C_n$ replaced by a deterministic $c$ (which bounds the chosen sub-sequencial limit of $C_n$). Together with Step 1 this provides the necessary domination for the conclusion of Theorem \ref{thm2crux}. (We have already mentioned we will have that $k_{n,r} \rightarrow k_r$ pointwise, and this can also be realized almost surely by choice of a further subsequence.)

\subsection{The limit operator}
\label{sec:LimitOperator}

As for  $r=1$, the limit kernel $k_r$ is Hilbert-Schmidt (and so $\mathfrak{G}_r$ is compact).
The advantage of the continuum setting is that one can readily derive an sde for the singular values of the family $(x,y) \mapsto A_x^{-1} A_y$. The 
 analysis is similar to that for the Lyapunov exponents for Brownian motion on the general linear group, see for example \cite{Norris}.

\begin{lemma}
\label{lem:ContinuumHSBound}
With probability one,
\begin{equation}
\label{HSr}
       \int_0^1 \int_x^1 \frac{1}{1-x} ||  A_x^{-1} A_y ||^2 dy dx = r^2 
        \int_0^{\infty} \int_s^{\infty} e^{-rt} || \A_s^{-1} \A_t^{} ||^2 dt ds < \infty,
\end{equation}
recall (\ref{Aeq}) and (\ref{NAeq}).  Again $|| \cdot ||$ is the matrix operator norm.
\end{lemma}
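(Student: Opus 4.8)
The plan is to establish the finiteness of the integral in \eqref{HSr} by controlling the operator norm $\| \A_s^{-1} \A_t \|$ through the largest singular value of the matrix process $(s,t) \mapsto \A_s^{-1} \A_t$, or equivalently the top Lyapunov-type exponent of the linear SDE \eqref{NAeq}. First I would note that the change of variables $1-x = e^{-rs}$, $1-y = e^{-rt}$ in the left integral produces the right-hand side of \eqref{HSr}, so it suffices to work with the half-line process $\A_x$ solving \eqref{NAeq} and to show $\int_0^\infty \int_s^\infty e^{-rt} \| \A_s^{-1}\A_t \|^2 \, dt\, ds < \infty$ almost surely. Fix $s$ and set $M_{s,t} = \A_s^{-1}\A_t$ for $t \ge s$; by Itô's formula applied to \eqref{NAeq}, $M_{s,t}$ itself satisfies an SDE of the same type (with a new matrix Brownian motion independent of $\mathcal{F}_s$), $dM_{s,t} = M_{s,t}\, d\tilde B_t + (-\tfrac a2 + \tfrac1{2\beta}) M_{s,t}\, dt$, $M_{s,s} = I$. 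So the inner integral $\int_s^\infty e^{-rt}\|M_{s,t}\|^2 dt$ has the same law as $\int_0^\infty e^{-r(s+u)} \| N_u\|^2 du = e^{-rs}\int_0^\infty e^{-ru}\|N_u\|^2 du$, where $N_u$ solves \eqref{NAeq} from $N_0 = I$.

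The key estimate is then a deterministic exponential decay bound: I would show that $\tfrac1u \log \|N_u\| \to \gamma$ almost surely for some finite constant $\gamma = \gamma(\beta, a, r)$ (the top Lyapunov exponent), and crucially that $\gamma < r/2$ so that $e^{-ru}\|N_u\|^2$ is integrable on $[0,\infty)$ with probability one. To get a handle on $\gamma$, apply Itô to $\|N_u\|^2_{HS} = \tr(N_u N_u^\dagger)$ or, more precisely, track the logarithm of the largest singular value: writing $N_u N_u^\dagger$ and using the SDE for $N_u$, one finds $d\,\tr(N_u N_u^\dagger) = 2\,\tr(N_u N_u^\dagger)(-\tfrac a2 + \tfrac1{2\beta})\,du + (\text{noise}) + (\text{quadratic variation terms})$; the quadratic variation of the off-diagonal $\mathbb{F}$-valued entries contributes $\sim \tr(N_u N_u^\dagger)\cdot(r-1)\,du$ (each of the $r-1$ off-diagonal directions in a row contributes a unit rate, times the appropriate $\mathbb{F}$-dimension normalization) and the diagonal contributes $\sim \tfrac1\beta \tr(N_uN_u^\dagger)\,du$. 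Collecting the drift of $\log \tr(N_u N_u^\dagger)$, one gets a constant that must be checked to be strictly less than $r$; under the hypothesis $a > -1$ this should come out as something like $(r-1) + \tfrac1\beta - a - (\text{strictly positive correction from the martingale part by Jensen})$, and the margin against $r$ is what forces $a>-1$ to enter. An alternative, cleaner route: bound $\|N_u\| \le \|N_u\|_{HS}$ and compute $\mathbb E \|N_u\|_{HS}^2 = \mathbb E \tr(N_uN_u^\dagger)$ exactly, since this expectation solves a linear ODE $\tfrac{d}{du}\mathbb E\tr(N_uN_u^\dagger) = \kappa\, \mathbb E\tr(N_uN_u^\dagger)$ with $\kappa = 2(-\tfrac a2 + \tfrac1{2\beta}) + (r-1) + \tfrac1\beta$ (the drift plus the full quadratic-variation contribution), giving $\mathbb E \tr(N_uN_u^\dagger) = r e^{\kappa u}$; then by Fubini $\mathbb E \int_0^\infty e^{-ru}\|N_u\|_{HS}^2 du = r\int_0^\infty e^{(\kappa - r)u}du$, which is finite precisely when $\kappa < r$, i.e. $-a + \tfrac2\beta + r - 1 < r$, i.e. $a > \tfrac2\beta - 1$. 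This is slightly stronger than $a>-1$; to reach the full range $a > -1$ one instead controls $\log\|N_u\|$ pathwise (the top Lyapunov exponent, governed by the drift of the log rather than of the second moment, is smaller by a strictly positive Jensen gap coming from the martingale part of $\log\|N_u\|^2$), so almost-sure integrability holds on the larger range even when $L^1$ integrability of the second moment fails.

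Having the inner integral $I(s) := \int_s^\infty e^{-rt}\|M_{s,t}\|^2\,dt$ in hand with $I(s) \overset{d}{=} e^{-rs} I(0)$ and $I(0) < \infty$ a.s., I would finish by showing the outer integral $\int_0^\infty I(s)\,ds < \infty$ a.s. This does \emph{not} follow from the one-dimensional marginal law alone (the $I(s)$ are highly correlated), so here I would revert to the pathwise Lyapunov bound: almost surely there is a finite random $T$ and a rate $\rho$ with $\gamma < \rho < r/2$ such that $\|\A_u\| \le e^{\rho u}$ and $\|\A_u^{-1}\| \le e^{\rho u}$ for all $u \ge T$ (the bound on $\A_u^{-1}$ follows from the same analysis applied to $\A_u^{-1}$, which solves an SDE of the same structure, or from $\det \A_u$ having an explicit log-normal law). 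Then for $s, t \ge T$, $\|\A_s^{-1}\A_t\| \le e^{\rho(s+t)}$, so $\int_T^\infty\int_s^\infty e^{-rt}e^{2\rho(s+t)}\,dt\,ds < \infty$ since $2\rho < r$; the remaining region $s \le T$ or $t \le T$ is handled by continuity of $(s,t)\mapsto \|\A_s^{-1}\A_t\|$ on the compact part and the already-established decay in $t$. The main obstacle is the Lyapunov exponent computation — getting a sharp enough bound on the top exponent of \eqref{NAeq} to land strictly below $r/2$ for the full range $a>-1$, which requires the pathwise log-estimate rather than the naive second-moment bound, and carefully tracking how the $\beta=1,2,4$ structure of the matrix Brownian motion (unit $\mathbb F$-valued off-diagonals, variance-$\tfrac1\beta$ diagonal) enters the quadratic variation. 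The correlation structure in the outer $s$-integral is a secondary but real point, resolved by switching from distributional to pathwise control once the uniform exponential decay is available.
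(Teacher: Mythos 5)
Your opening moves match the paper's: reduce via the change of variables to the half-line form, then control $\|\A_s^{-1}\A_t\|$ through the largest singular value, and recognize that a bare second-moment bound ($\mathbb{E}\,\tr(N_uN_u^\dagger) \sim e^{\kappa u}$ with $\kappa = r-1-a+\tfrac{2}{\beta}$) only covers $a > \tfrac{2}{\beta}-1$, so one needs a pathwise (``Jensen gap'') estimate on $\log\|\cdot\|$. That diagnosis is exactly right, and indeed the pathwise rate is $\tfrac{r-1-a}{2}$, smaller than $\kappa/2$ by exactly $\tfrac{1}{\beta}$. However, you never actually prove the pathwise estimate, and the point where you wave at it is precisely where the real work lies. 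The paper derives the joint SDE for the squared singular values $\lambda_1 \geq \dots \geq \lambda_r$ of $\A_s^{-1}\A_t$ (equation \eqref{eigsfornorm}), and the obstruction is the interaction drift $\lambda_i\sum_{j\neq i}\tfrac{\lambda_i+\lambda_j}{\lambda_i-\lambda_j}$: for the \emph{top} log-eigenvalue $\gamma_1 = \log\lambda_1$, this term is bounded \emph{below} by $r-1$ but is unbounded \emph{above} (the gaps can be small), so one cannot directly read off an upper bound on $\gamma_1$. The paper circumvents this by bounding $\gamma_r$ from above (for the bottom particle the interaction term is $\leq -(r-1)$), bounding each gap $\gamma_k - \gamma_{k+1}$ via a scalar comparison SDE, and assembling $\gamma_1 = \gamma_r + \sum(\gamma_k - \gamma_{k+1})$. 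That chain of comparisons, which also yields the random constant $C$ with the $(t-s)^{1/2+}$ correction, is the core of the lemma and is absent from your proposal. ``Governed by the drift of the log'' is not a substitute for this argument, because the drift of $\log\lambda_1$ contains an unbounded term.

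The outer $s$-integral is also handled incorrectly. You propose the product bound $\|\A_s^{-1}\A_t\| \leq \|\A_s^{-1}\|\,\|\A_t\| \leq e^{\rho(s+t)}$, which discards the crucial cancellation between $\A_s^{-1}$ and $\A_t$ (they share the path up to time $s$, so $\A_s^{-1}\A_t$ grows like $e^{\gamma(t-s)}$, not $e^{\gamma(s+t)}$). Beyond being structurally wasteful, your stated condition ``$2\rho < r$'' does not give what you claim: carrying out the integral,
\begin{align*}
\int_T^\infty\!\!\int_s^\infty e^{-rt}e^{2\rho(s+t)}\,dt\,ds
= \int_T^\infty e^{2\rho s}\,\frac{e^{(2\rho-r)s}}{r-2\rho}\,ds
= \frac{1}{r-2\rho}\int_T^\infty e^{(4\rho - r)s}\,ds,
\end{align*}
which converges only if $4\rho < r$. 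Since the best $\rho$ one can hope for is the top Lyapunov exponent $\tfrac{r-1-a}{2}$ of $\log\|\A_t\|$, the condition $4\rho < r$ reads $a > \tfrac{r}{2}-1$, a strictly smaller range than $a > -1$ whenever $r \geq 1$. What is actually needed (and what the paper proves, see Corollary \ref{cor:Norms}) is a bound $\|\A_s^{-1}\A_t\| \leq C\, e^{\frac{r-1-a}{2}(t-s) + C(t-s)^{1/2+}}$ with a single random constant $C$ uniform over the two-parameter family $(s,t)$; the $t-s$ dependence, rather than $s+t$, is what makes $\int_0^\infty\!\int_s^\infty e^{-rt}\|\A_s^{-1}\A_t\|^2\,dt\,ds$ reduce to $\tfrac{1}{r}\int_0^\infty e^{(-1-a)u + Cu^{1/2+}}du < \infty$ for all $a > -1$. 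Both the eigenvalue-diffusion/gap argument and the two-parameter uniform bound are missing from your sketch, and without them the proof does not close.
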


\begin{proof}  
As all norms are equivalent we can reduce this to an estimate on the largest singular value of the two-parameter family 
$  \A_s^{-1} \A_t$.
Thus,  introducing the processes $ t \mapsto U(t; s) = \A_t^{\dagger} \A_s^{-\dagger} \A_s^{-1}  \A_t $ 
subject to $U(s; s) = I$, let  $ 0 \le \lambda_r \le \cdots \le \lambda_1$ denote the corresponding eigenvalues. A computation similar to that performed in the proof of Theorem \ref{thm:MultiRiccati}  below shows that these points perform the joint diffusion process:
\begin{align}
\label{eigsfornorm}
  d \lambda_i & = \frac{2}{\sqrt{\beta}} \lambda_i d b_i + ( - a + \frac{2}{\beta} ) \lambda_i  dt
                            + \lambda_i \sum_{j \neq i} \frac{\lambda_i + \lambda_j}{ \lambda_i  - \lambda_j}   dt.                    
\end{align}
For the stated almost sure conclusion, \eqref{eigsfornorm} is likewise viewed as a two-parameter family of processes $\lambda_i = \lambda_i(t; s)$ for $t \ge s$, all coupled through being run on the same Brownian motions $b_1, \dots, b_r$.

Passing to logarithms $\gamma_i = \log \lambda_i$, the upper and lower paths 
can bounded as in:
\begin{equation}
\label{lowestgamma}
   \gamma_1(t;s)  \ge  \frac{2}{\sqrt{\beta}} | b_1(t) - b_1(s) | + ( - a  + r-1) (t-s),
\end{equation}
and
\begin{equation}
\label{largestgamma}
  \gamma_r(t;s)  \le  \frac{2}{\sqrt{\beta}} | b_r(t) - b_r(s) | +  ( - a  - r+1)(t-s).
\end{equation}
These estimates follow upon substituting  $ \sum_{j \ne 1} \frac{\lambda_1+ \lambda_j }{\lambda_1 - \lambda_j}  \ge r-1$ or  
$ \sum_{j \ne r} \frac{\lambda_r+\lambda_j}{\lambda_r - \lambda_j}  \le -r+1$, respectively, into \eqref{eigsfornorm}.  Next, since
$$ 
\sum_{j \neq k} 
 \frac{\lambda_{k}+ \lambda_j}{\lambda_{k} - \lambda_j} 
 - \sum_{j \neq k+1 } \frac{\lambda_{k+1}+ \lambda_j}{\lambda_{k+1} - \lambda_j} \le 2 \, \frac{\lambda_{k} + \lambda_{k+1}}{\lambda_{k}-\lambda_{k+1}},
$$
we have that $\gamma_{k} - \gamma_{k+1}$ is path-wise bounded by the solution of
\begin{equation*}
  dz_t = db_t + 2 \, dt +  { 4 }{(e^{z_t}- 1)^{-1}} \, dt,  
\end{equation*}
again started from $t=s$.  Here $b_t = \frac{2}{\sqrt{\beta}} ( b_{k}(t)  - b_{k+1}(t))$.  Since $\frac{1}{e^z -1} \ge \frac{1}{z}$, the process $z_t$ is almost surely positive. It is also bounded below by $b_t +2t$, and so there is an almost surely finite time $\tau$ after which $z_t > t$. These observations imply that $\int_0^{\infty} \frac{dt}{e^{z_t}-1}$ is finite with probability one.     
Hence,
\begin{equation}
\label{incrementbound}
   \gamma_{k} - \gamma_{k+1}  \le    \frac{2}{\sqrt{\beta}} | b_k(t) - b_k(s) |  + \frac{2}{\sqrt{\beta}} | b_{k+1}(t) - b_{k+1}(s) |  + 2 (t-s) +  C,
\end{equation}
with an almost surely finite $C = C(b_k, b_{k+1})$.

Putting together  \eqref{lowestgamma} and \eqref{incrementbound}, along with the law of the iterated logarithm, we find that 
$$
 \gamma_r(t;s) \le \gamma_1(t;s) + \sum_{k=1}^{r-1} (\gamma_{k+1}(t;s) - \gamma_k(t;s)) \le (-a +r-1) (t-s) + C (1+ (t-s)^{\frac{1}{2}+})  
$$ 
with another almost surely finite constant $C = C(b_1, \dots, b_r)$. The conclusion is that  the integral
$\int_0^{\infty} \int_s^{\infty} $ $ e^{-rt} || \A_s^{-1} \A_t  ||^2  dt  ds $ converges along with
 $\int_0^{\infty} \int_s^{\infty} e^{-rt} e^{(r-1-a) (t-s)} dt ds$. 
\end{proof}

For later we extract the following from the proof of Lemma \ref{lem:ContinuumHSBound}.

\begin{corollary}  
\label{cor:Norms}
There is a finite random constant $C$ such that, for $s\le t$, 
\begin{equation}
\label{calnorms}
   || \A_s^{-1} \A_t || \le  C  e^{ \frac{ r -a -1}{2} \, (t-s)   + C (t-s)^{\frac{1}{2}+} },
 \end{equation}
holds almost surely. Back in the original coordinates this implies that, also almost surely and with 
$0 \le x \le y \le 1$, 
\begin{equation}
\label{firstnorms}
   || A_x^{-1} A_y ||  \le  C'  \left( \frac{1-y}{1-x} \right)^{-\frac{1}{2} + \frac{a+1}{2r} - \epsilon},
\end{equation}
for $\epsilon > 0$ any  chosen (small, deterministic) constant and (a finite, random) $C' = C'(C, \epsilon)$.
\end{corollary}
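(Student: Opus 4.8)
The plan is to extract \eqref{calnorms} directly from the estimates already assembled in the proof of Lemma~\ref{lem:ContinuumHSBound}, and then recover \eqref{firstnorms} by inverting the change of variables $1-x=e^{-rs}$, $1-y=e^{-rt}$.

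First I would read the proof of Lemma~\ref{lem:ContinuumHSBound} \emph{pointwise} in the pair $(s,t)$ rather than only after the final double integration. Since $\|\A_s^{-1}\A_t\|^2$ is exactly the largest eigenvalue $\lambda_1(t;s)$ of $U(t;s)=\A_t^{\dagger}\A_s^{-\dagger}\A_s^{-1}\A_t$, writing $\gamma_1=\gamma_r+\sum_{k=1}^{r-1}(\gamma_k-\gamma_{k+1})$ and feeding in the one-step increment bound \eqref{incrementbound} together with the bound \eqref{largestgamma} on $\gamma_r$ yields, for every $0\le s\le t$,
\[
   \log\|\A_s^{-1}\A_t\|^2 \;=\;\gamma_1(t;s)\;\le\;(r-a-1)(t-s)+C\bigl(1+(t-s)^{\frac12+}\bigr)
\]
almost surely. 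Taking square roots, absorbing $e^{C/2}$ into the prefactor, and relabeling the random constants is exactly \eqref{calnorms}.

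For \eqref{firstnorms} I would substitute $t-s=\tfrac1r\log\tfrac{1-x}{1-y}$ --- the inverse of the reparametrization $\A_x=A_{1-e^{-rx}}$ from \eqref{NAeq}, under which $0\le x\le y<1$ corresponds to $0\le s\le t<\infty$ --- into \eqref{calnorms}. The linear term exponentiates to $\bigl(\tfrac{1-y}{1-x}\bigr)^{-\frac12+\frac{a+1}{2r}}$, precisely the target power. For the sublinear term, because $(t-s)^{\frac12+}=o(t-s)$ one has, for any prescribed $\epsilon>0$, the bound $C(t-s)^{\frac12+}\le\epsilon(t-s)+C_\epsilon$, which on exponentiation contributes an extra factor $C_\epsilon'\bigl(\tfrac{1-y}{1-x}\bigr)^{-\epsilon/r}$. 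Combining these and renaming $\epsilon/r\mapsto\epsilon$ gives the claimed bound with a finite random $C'=C'(C,\epsilon)$.

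The only genuine obstacle is uniformity: the random constant $C$ in the display above must be good for \emph{all} pairs $(s,t)$ with $s\le t$, not merely almost surely finite for each fixed base point $s$. This I would handle exactly as in the discrete tightness estimate \eqref{discrete_tightness} behind Proposition~\ref{PropForThm1}: a quantitative law of the iterated logarithm controls all the Brownian increments $b_i(t)-b_i(s)$ simultaneously, and the auxiliary gap process $z$ of the proof of Lemma~\ref{lem:ContinuumHSBound}, started from $0$ at the generic time $s$, has $\int_s^{\infty}(e^{z_{t'}}-1)^{-1}\,dt'$ bounded uniformly in $s$, so the constant entering through \eqref{incrementbound} can be chosen independently of $s$. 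Everything else is bookkeeping.
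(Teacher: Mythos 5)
Your reading is faithful to what the paper means by ``extract from the proof of Lemma~\ref{lem:ContinuumHSBound},'' and you have silently corrected what is clearly a typo there. The ordering in that proof is $\lambda_r\le\cdots\le\lambda_1$, so $\|\A_s^{-1}\A_t\|^2=\lambda_1=e^{\gamma_1}$ and the quantity to bound from above is $\gamma_1$; the paper's display writes $\gamma_r\le\gamma_1+\sum(\gamma_{k+1}-\gamma_k)$, but what is intended (and what you write) is $\gamma_1=\gamma_r+\sum_{k=1}^{r-1}(\gamma_k-\gamma_{k+1})$, with \eqref{largestgamma} controlling $\gamma_r$ and \eqref{incrementbound} the gaps, the drift coefficients adding to $(-a-r+1)+2(r-1)=r-a-1$, which after halving is the exponent in \eqref{calnorms}. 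The substitution $t-s=\tfrac{1}{r}\log\tfrac{1-x}{1-y}$ and the absorption $C(t-s)^{\frac12+}\le\epsilon(t-s)+C_\epsilon$ give \eqref{firstnorms} exactly as you describe. This is the paper's route.

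The one place you go beyond the paper is the uniformity paragraph, and there the proposed fix does not work. The bound \eqref{calnorms} cannot hold with a single a.s.\ finite random $C$ for \emph{all} $0\le s\le t<\infty$: taking $s=n$, $t=n+1$, the increments $b(n+1)-b(n)$ are i.i.d.\ standard normals, so $\sup_n |b(n+1)-b(n)|=\infty$ a.s., while the right side of \eqref{calnorms} is then a fixed finite constant. A quantitative LIL controls $|b(t)|$ by $O(\sqrt{t\log\log t})$ from the fixed origin; it does \emph{not} deliver $|b(t)-b(s)|\le C\bigl(1+(t-s)^{\frac12+}\bigr)$ uniformly over all starting points $s$. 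Likewise $\int_s^\infty(e^{z_{t'}}-1)^{-1}\,dt'$ is finite for each $s$ but not bounded uniformly in $s$, for the same sup-of-unbounded-increments reason, so the constant in \eqref{incrementbound} genuinely depends on $s$. The honest version of the Corollary --- and in fact all the paper uses downstream, in Corollary~\ref{cor:DiscreteBound} and Lemma~\ref{lem:ThrowOutTail}, where the range is cut off at $y\le 1-c\log n/n$ --- is \eqref{calnorms} uniformly on compact $(s,t)$-sets with a constant depending on the compact, equivalently \eqref{firstnorms} on $1-y\ge\delta$. The paper itself elides this point, so it is not a discrepancy with the paper's argument, but your uniformity claim as stated is false and should be weakened to the compact/cutoff form.
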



\subsection{Rewriting the matrix product}
\label{sec:Euler}

A bit of convenient preprocessing casts the matrix product $A_n(x)$, see again \eqref{eq:An} and \eqref{discreteAnEq0}, as an approximate Euler scheme for the claimed limiting sde \eqref{Aeq}.

\begin{lemma} 
\label{lem:Euler}
 With
 $\triangle
= \frac{r}{n}$ there is the representation
\begin{equation}
\label{discreteAnEq1}
   A_n(x + \triangle) - A_n(x) = A_n(x) 
    \left( \frac{1}{\sqrt{ n (1-x)}} G_{[ n x/r ]} +  \frac{(- a +\frac{1}{\beta})}{2 n(1-x)} {I}  + \E_{[ n x/r ]} \right).  
\end{equation}
Here $G_k = G_{n,r}$ is an independent sequence of approximate $\mathbb{F}$-Gaussian matrices. All off-diagonal elements are independent unit $\mathbb{F}$-Gaussians, while the diagonals are sums of independent $\chi$ variables: 
\begin{equation}\label{Gdiag}
   (G_k)_{ii} = (D_k)_{ii} + (O_k)_{ii} - \ev [ (D_k)_{ii} + (O_k)_{ii} ],
\end{equation}
recall the definitions \eqref{ondiag} and \eqref{offdiag}.
The error matrices $\E_k = \E_{n,k}$ form another  independent sequence. Each has a deterministic plus noise decomposition $\E_k  = \E_k^0 + \E_k^1$ satisfying:
\begin{equation}
\label{ErrorConditions}
 \| \E_k^0 \| \le \frac{c}{(n - rk)^{3/2}}, \quad \ev \E_{k}^1 =0,  \quad \ev  \| \E_{k}^1 \|^{2p}  \le \frac{c}{(n-rk)^{2p}}
\end{equation}
for integer $p \ge 1$
with constants $c = c(\beta,a, r, c',p)$ granted that $n -rk \ge c' \gg 1$.
\end{lemma}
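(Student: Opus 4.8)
The plan is to start from the exact one--step multiplier in \eqref{discreteAnEq0}, namely $O_{k+1}D_{k+2}^{-1}-I$ with $k=[nx/r]$, and carefully expand it. First I would treat the diagonal parts. Both $(D_{k+2})_{ii}$ and $(O_{k+1})_{ii}$ are $\chi$ variables of parameter of order $n-rk$; writing $D_{k+2}^{-1}$ via the identity $(D_{k+2})^{-1} = \ev[(D_{k+2})_{ii}]^{-1}(I + \text{(centered part)} + \text{(higher order)})$ and using that for a $\chi_m$ variable $\ev \chi_m = \sqrt{m}(1+O(1/m))$ and $\mathrm{Var}(\chi_m)=O(1)$, one extracts a leading scale $\frac{1}{\sqrt{n(1-x)}}$ (since $m \approx \beta(n-rk) \approx \beta n(1-x)$ after the time change $x = rk/n$), a deterministic drift term, and a fluctuation term which, after recentering, is precisely the matrix $G_k$ with the prescribed diagonal \eqref{Gdiag}. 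The $-a/2$ and $\frac{1}{2\beta}$ pieces of the drift come from the two sources: the $-a$ shift in the index of the $\chi$ parameters relative to $n-rk$ (Taylor expanding $\sqrt{\beta(n+a-\dots)}/\sqrt{\beta(n-\dots)}$), and the $\frac{1}{2\beta}$ from the second moment correction $\ev[1/\chi_m^2]$ type terms — this is exactly the classical It\^o--correction bookkeeping already familiar from \cite{RR}.

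Next I would handle the off-diagonal parts. When $j>i$ within a block, $(D_k)_{ij}=g_{ij}$ and $(O_k)_{ij}=0$; when $j<i$, $(O_k)_{ij}=g_{ij}$ and $(D_k)_{ij}=0$; these $g_{ij}$ are $\mathbb{F}$-valued unit Gaussians. Multiplying out $O_{k+1}D_{k+2}^{-1}$, the off-diagonal entries of the product are, to leading order, $(O_{k+1})_{ij}(D_{k+2})_{jj}^{-1}$ plus cross terms; since $(D_{k+2})_{jj}^{-1} \approx \frac{1}{\sqrt{n(1-x)}}\cdot \sqrt{\beta}$ up to lower order, the off-diagonal part of $G_k$ is built from the $g$'s with the correct normalization, and all cross-products of two off-diagonal Gaussians, or of an off-diagonal Gaussian with a diagonal fluctuation, are collected into $\E_k^1$. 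The point is to organize the expansion so that $G_k$ collects exactly the first-order (in $1/\sqrt{n-rk}$) mean-zero terms with the right covariance structure (independent unit $\mathbb{F}$-Gaussians off-diagonal, variance-$\frac{1}{\beta}$ real on the diagonal — matching the matrix Brownian motion in \eqref{Aeq}), while the deterministic second-order terms not already accounted for in the $(-a+\frac1\beta)/(2n(1-x))I$ drift go into $\E_k^0$, and all remaining higher-order random terms go into $\E_k^1$.

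The estimates \eqref{ErrorConditions} then reduce to moment bounds on $\chi$ variables and Gaussians. For $\E_k^0$: a deterministic term is a product of at least ``one and a half'' small factors — e.g.\ a product of the $O(1/(n-rk))$ deterministic diagonal correction with the $O(1/\sqrt{n-rk})$ scale, or a genuine third-order Taylor remainder — each of which is $O((n-rk)^{-3/2})$; one verifies the constant depends only on $\beta,a,r$ once $n-rk\ge c'$ so that the relevant $\chi$ parameters are bounded below. For $\E_k^1$: these are polynomial expressions (of degree $\ge 2$ in the centered quantities, or degree $1$ but multiplied by an extra $1/\sqrt{n-rk}$) in independent centered $\chi$'s and Gaussians; using that $\ev|\chi_m - \ev\chi_m|^{2p} = O(1)$ and $\ev|g_{ij}|^{2p}=O(1)$ uniformly, together with the fact that the inverse $D_{k+2}^{-1}$ has all moments bounded once $m\gtrsim c'$ (small-ball lower bounds on $\chi_m$ giving control of negative moments), Minkowski/H\"older gives $\ev\|\E_k^1\|^{2p}\le c(n-rk)^{-2p}$. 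The mean-zero property $\ev\E_k^1=0$ follows by construction, since we \emph{defined} $G_k$ to absorb all first-order mean-zero pieces and the deterministic remainders to be in $\E_k^0$, so what is left in $\E_k^1$ consists of products of at least two centered independent factors (hence mean zero) or of higher Taylor remainders arranged to be centered. Independence across $k$ is inherited from the independence of the blocks $(D_k,O_k)$.

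The main obstacle, and the step I expect to require the most care, is the treatment of the matrix inverse $D_{k+2}^{-1}$: $D_{k+2}$ is upper triangular with $\chi$'s on the diagonal and Gaussians above, so $D_{k+2}^{-1} = \Lambda^{-1}(I + N)^{-1}$ where $\Lambda$ is the diagonal and $N=\Lambda^{-1}(\text{strictly upper part})$ is nilpotent of index $r$; one must expand $(I+N)^{-1} = I - N + N^2 - \cdots \pm N^{r-1}$ and bound each term's contribution, keeping track of which entries are $O(1/\sqrt{n-rk})$ and which are $O((n-rk)^{-j/2})$ for the $N^j$ term. Getting the bookkeeping right so that exactly the leading linear-in-noise contribution lands in $G_k$, the genuine $O((n-rk)^{-1})$ deterministic term lands in the explicit drift, and everything else is provably $O((n-rk)^{-3/2})$ deterministic or $O((n-rk)^{-1})$-in-$L^{2p}$ random, is where the real work lies — but it is entirely a finite, $r$-dependent algebraic expansion controlled by standard $\chi$- and Gaussian-moment estimates, with no new probabilistic input beyond what \cite{RR} already uses at $r=1$.
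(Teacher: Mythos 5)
Your plan is structurally sound and lands on the same essential mechanics as the paper: expand the one-step multiplier $O\,D^{-1}-I$ to collect the order-$(n-rk)^{-1/2}$ mean-zero noise into $G_k$, the order-$(n-rk)^{-1}$ deterministic piece into the explicit drift, and ship everything else to $\E_k$ with the stated bounds, all controlled by standard $\chi$ and Gaussian moment estimates plus independence across blocks. The one genuine difference is where you attack the inverse. You propose $D^{-1}=\Lambda^{-1}(I+N)^{-1}$ with $\Lambda$ the random diagonal of $\chi$'s and $N=\Lambda^{-1}(\text{strict upper part})$ nilpotent, using the \emph{exact} finite Neumann series. The paper instead centers first: it writes $O_kD_{k+1}^{-1}=(F_0+\tilde H)(F_a-H)^{-1}$ with $F_a,F_0$ the \emph{deterministic} diagonal matrices of $\ev\chi$'s, $H$ upper triangular with centered $\chi$'s on the diagonal (hence \emph{not} nilpotent), and expands $(F_a-H)^{-1}$ by a resolvent identity truncated at cubic order with a remainder $(F_a-H)^{-1}(HF_a^{-1})^3$. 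Your decomposition buys you an exact finite expansion, but at the cost that every term carries random factors $\Lambda^{-1}$, so each piece needs inverse-$\chi$ moment control; the paper's buys deterministic coefficients for all but the single remainder term, which it then handles with one Cramer's-rule/Cayley--Hamilton estimate. Both routes work.

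The place where your sketch is genuinely underdeveloped, and where the paper does real bookkeeping, is the \emph{exact} centering $\ev\E_k^1=0$. Your statement that what remains ``consists of products of at least two centered independent factors (hence mean zero)'' is not quite right: on the diagonal of the second-order term you get squares of a single centered $\chi$ (the analogue of the paper's $(H^2)_{ii}=H_{ii}^2$), which are \emph{not} products of independent factors and have strictly positive mean. This is precisely where the drift constant $\tfrac{1}{\beta}/2$ comes from, and the paper handles it by explicitly adding and subtracting $F_0F_a^{-1}\,\ev(HF_a^{-1})^2$ so that what lands in $\E^1$ is honestly recentered. You gesture at this with ``arranged to be centered,'' and you do correctly attribute the $\tfrac{1}{2\beta}$ to second-moment corrections, but any write-up along your lines must make that recentering explicit; otherwise $\ev\E_k^1\ne 0$. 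A second, smaller gap: the paper shows $F_0\,\ev\!\left[(F_a-H)^{-1}(HF_a^{-1})^3\right]$ is \emph{diagonal} (via Cayley--Hamilton and the observation that $\ev H^p$ is diagonal because $H$ is upper triangular), so that it cleanly joins $\E_k^0$; in your nilpotent expansion you would need an analogous structural argument to see that the deterministic remainder is controllable as $O((n-rk)^{-3/2})$ and to isolate its mean from its fluctuation. These are not fatal problems, only places where ``the bookkeeping is where the real work lies'' needs to be cashed out, and the paper's centered-$H$ decomposition is arguably the cleaner vehicle for doing so.
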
 

One first  byproduct of the above  (coupled with say \cite[Theorem 11.2.3]{SV}) is the advertised convergence of $A_n(x)$ to $A_x$. The representation \eqref{discreteAnEq1} will also be key for the norm control of the associated kernel.

\begin{corollary}
The Markov chain $A_n(x)$ converges pathwise (in the usual Skorohod topology of uniform on compacts in
$[0,1)$) to the diffusion $A_x$ defined by \eqref{Aeq}. 
\end{corollary}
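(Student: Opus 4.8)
The plan is to derive the pathwise convergence of the Markov chain $A_n(x)$ to the diffusion $A_x$ from the Euler-type representation \eqref{discreteAnEq1} in Lemma \ref{lem:Euler}, invoking a standard result on convergence of triangular arrays to It\^o diffusions (e.g. \cite[Theorem 11.2.3]{SV}). First I would rewrite \eqref{discreteAnEq1} in the canonical one-step-increment form: with $\triangle = r/n$ and $x = x_i = ri/n$,
\begin{equation*}
  A_n(x+\triangle) - A_n(x) = A_n(x)\left( M_i + N_i + R_i \right),
\end{equation*}
where $M_i = \frac{1}{\sqrt{n(1-x)}} G_{[nx/r]}$ is the martingale-increment part, $N_i = \frac{(-a+\frac{1}{\beta})}{2n(1-x)} I$ is the predictable drift part, and $R_i = \E_{[nx/r]}$ is the error term. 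The goal is then to verify the three ingredients of the diffusion-approximation theorem: (i) the conditional means of the increments converge, after dividing by $\triangle$, to the drift coefficient $b(x, A) = (-\frac{a}{2r(1-x)} + \frac{1}{2\beta r(1-x)}) A$ of \eqref{Aeq}; (ii) the conditional covariances of the increments, divided by $\triangle$, converge to the diffusion coefficient determined by $\frac{1}{\sqrt{r(1-x)}} A\, dB_x$ with the prescribed $\mathbb F$-Brownian-motion structure; and (iii) a Lindeberg/negligibility condition controlling higher moments holds, so the error terms $R_i$ and the jumps are asymptotically negligible.

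For step (i), $\ev M_i = 0$ since $G_k$ is centered by construction \eqref{Gdiag}, so only the deterministic term $N_i$ and the deterministic part $\E_k^0$ of the error contribute to the conditional mean. One has $N_i = \frac{(-a+1/\beta)}{2 n(1-x)} I$, and dividing by $\triangle = r/n$ gives exactly $\frac{(-a+1/\beta)}{2 r (1-x)} I$, matching the drift in \eqref{Aeq}; meanwhile $\|\E_k^0\| \le c (n-rk)^{-3/2}$ from \eqref{ErrorConditions}, which divided by $\triangle$ is $O(n^{-1/2}(1-x)^{-3/2})$ and hence vanishes uniformly on compact subsets of $[0,1)$. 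For step (ii), the conditional covariance of $M_i$ is $\frac{1}{n(1-x)}$ times the covariance structure of the approximate Gaussian matrix $G_k$; since the off-diagonal entries are unit $\mathbb F$-Gaussians and the diagonal entries are centered sums of two independent $\chi$ variables (hence have variance $\to \frac{1}{2\beta}+\frac{1}{2\beta} = \frac{1}{\beta}$ in the appropriate normalization — one checks $\mathrm{Var}(\chi_{m\beta}/\sqrt\beta) \to \frac{1}{2\beta}$ as $m \to\infty$), dividing by $\triangle$ produces exactly the covariance of $\frac{1}{\sqrt{r(1-x)}}\,dB_x$ described after \eqref{Aeq}. The contribution of $\E_k^1$ to the quadratic variation is $O(\ev\|\E_k^1\|^2) = O((n-rk)^{-2})$, negligible after dividing by $\triangle$. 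For step (iii), the fourth-moment (or $2p$-th moment) bounds on $G_k$ (sub-Gaussian/$\chi$ tails) and the bound $\ev\|\E_k^1\|^{2p} \le c(n-rk)^{-2p}$ together give $\sum_i \ev\|A_n(x_i)^{-1}(A_n(x_{i+1})-A_n(x_i))\|^4 = O(n^{-1})$ on compacts, which is the Lindeberg-type negligibility; combined with a priori tightness (itself following from the second-moment bounds and a Kolmogorov-type criterion, or simply from the uniform control in Corollary \ref{cor:Norms}), this yields relative compactness, and the identification of all subsequential limits via the martingale problem for \eqref{Aeq} — whose solution is unique because the coefficients are Lipschitz on compact subsets of $[0,1)$ — gives the full convergence.

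The main obstacle I expect is the blow-up of the coefficients as $x \uparrow 1$: the drift and diffusion both carry a $\frac{1}{1-x}$ (and the error a $\frac{1}{(1-x)^{3/2}}$), so none of the estimates are uniform on all of $[0,1]$, and one must genuinely restrict to compact subintervals $[0,1-\delta]$ and then let $\delta \downarrow 0$. This is precisely why the corollary is stated with the uniform-on-compacts Skorohod topology on $[0,1)$ rather than on $[0,1]$; the argument is: for each fixed $\delta$, apply the diffusion-approximation theorem on $[0,1-\delta]$ where the coefficients are bounded and Lipschitz, obtaining convergence in $D[0,1-\delta]$, and then note that since this holds for every $\delta$ and the limit diffusion $A_x$ is consistently defined on all of $[0,1)$, one gets convergence in the uniform-on-compacts topology of $[0,1)$. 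A secondary, purely bookkeeping nuisance is the indexing ambiguity already flagged around \eqref{discreterkernel2}–\eqref{discreteAnEq0} (whether one uses $D_{k+1}^{-1}$ or $D_k^{-1}$, and the $\lceil\cdot\rceil$ versus $[\cdot]$ in $[nx/r]$): these off-by-one shifts change $A_n(x)$ only by a single multiplicative factor $D_k^{-1}$ of size $1 + O(n^{-1/2})$ uniformly on compacts, so they do not affect the limit, but this must be remarked on.
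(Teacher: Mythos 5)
Your proposal is correct and takes exactly the approach the paper intends: the paper's entire proof of this corollary is the one-line remark that Lemma~\ref{lem:Euler} together with \cite[Theorem 11.2.3]{SV} gives the result, and you are simply fleshing out the three conditions (drift, covariance, negligibility/Lindeberg) that the Stroock--Varadhan theorem requires, all of which check out against \eqref{Aeq} and \eqref{ErrorConditions}. The observations about restricting to compacts in $[0,1)$ and the harmlessness of the off-by-one indexing are accurate and worth having on the record, even if the paper leaves them implicit.
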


\begin{proof}[Proof of Lemma \ref{lem:Euler}]
We have to show that 
\begin{equation}
\label{IncrementExpansion}
O_{k}^{} D_{k+1}^{-1} = I+   \frac{1}{\sqrt{n - rk}} G  +  \frac{a -\frac{1}{\beta}}{2 (n - rk)} {I} + \E,
\end{equation}
with random matrices $G$ and  $\E$ having the stated properties.  
For convenience we are shifting the index on the $OD^{-1}$ product to $k$ from $(k+1)$.

Denote by $F_{k,b} = F_{n,k,b}$ the $r \times r$ diagonal matrix with nonzero entries $ \frac{1}{\sqrt{\beta}} \ev \chi_{\beta (n + b - r k - i+1)}$, $i =1,2,\dots, r$. Then, recalling the definitions of the $D$ and $O$ blocks \eqref{ondiag}-\eqref{offdiag},  we write
\begin{equation}
\label{IncrementDecomposition}
  O_k^{} D_{k+1}^{-1} := (F_{k,0}  + \tilde{H} ) ( F_{k,a} - H )^{-1},
\end{equation}
making  two more definitions. $H$ is upper triangular with independent $F$-Gaussians off diagonal and centered negative $\chi$ variables (pairing with 
$F_{k,a}$ entry-wise) on diagonal, and $\tilde{H}$ is lower triangular, again with independent $F$-Gaussians off diagonal and  centered $\chi$ variables (pairing  now with 
$F_{k,0}$) on diagonal. Extracting
$$
   G := H + \tilde{H}
$$
is the guiding principle.

By the resolvent identity we have that, 
\begin{equation}
\label{resolvent}
  (F_a - H)^{-1} = F_a^{-1} + F_a^{-1}  H F_a^{-1} + F_a^{-1} (H F_a^{-1})^2 + (F_a - H)^{-1} (H F_a^{-1})^{3},
\end{equation}
where we drop the $k$ indices from here on. Substituting the polynomial approximation inherent in \eqref{resolvent} into \eqref{IncrementDecomposition} gives
\begin{align}
\label{IncrementExpand1}
   (F_0 + \tilde{H}) & ( F_{a} - H )^{-1}  ( I - (H F_a^{-1})^{3})  \\
    &  =   F_0 F_a^{-1} + \tilde{H} F_a^{-1}  + F_0^{} F_a^{-1}  H F_a^{-1} +
       F_0^{} F_a^{-1}  \ev (H F_a^{-1})^2   \nonumber \\
                                                            &  + \tilde{H} F_a^{-1} H F_a^{-1} +  \tilde{H} F_a^{-1} (H F_a^{-1})^2 + 
                                                             F_0^{} F_a^{-1} (  (H F_a^{-1})^2  - \ev (H F_a^{-1})^2 ). \nonumber 
\end{align}
The first line on the right hand side of \eqref{IncrementExpand1} contains the leading order in  \eqref{IncrementExpansion}, while the second line is mean zero ($H$ and $\tilde{H}$ are independent with mean zero entries) and will be absorbed into $\E$.

To deal with the first line we need the estimates
\begin{align}
\label{eq:Fs}
       \|  F_0^{} F_a^{-1} -  (1 - \frac{a}{2(n-k)} )  I  \|  &=  O \left(\frac{1}{(n-rk)^{3/2}} \right),  \\              
      \quad  \|  F_{a}^{-1} - \frac{1}{\sqrt{n-rk}} I \|  & =   O \left(\frac{1}{n-rk} \right),  \nonumber
\end{align}
in the first three terms, and, given that $ \ev (H F_a^{-1})^2  = F_a^{-2} \ev H^2$, 
\begin{align}
\label{eq:HFs}
      \quad  \|  \ev H^2 -  \frac{1}{2 \beta} I \|  & = O \left(\frac{1}{n-rk} \right),
\end{align}
in the last term. Of course, the same estimates would hold for $\tilde{H}$ replacing $H$ in the last display. 
All of this comes down to the simple appraisals
\begin{equation}
\label{chiMeanVar}
   \ev \chi_t = \sqrt{t}  - \frac{1}{4\sqrt{t}} + O \left(  \frac{1}{t^{3/2}} \right), 
\quad
     \mbox{Var} \, \chi_{t}  = \frac{1}{2} + O \left(\frac{1}{t} \right),
\end{equation}
for $t$ large enough. In summary,
\begin{align}
\label{summary1}
  F_0^{} F_a^{-1} + \tilde{H} F_a^{-1}  & + F_0^{} F_a^{-1}  H F_a^{-1} +   F_0^{} F_a^{-1}  \ev (H F_a^{-1})^2   \\
                            & = I  + \frac{1}{\sqrt{n-rk}} (H + \tilde{H})  + \frac{(- a +1/\beta)}{2(n-rk)} \mathcal{I} + \E', \nonumber
\end{align}
with diagonal $\mathcal{I} = I + O( (n-rk)^{-1/2})$ and a mean-zero $\E'$  satisfying the conditions laid out for the 
$\mathcal{E}_k^1$ terms.
For instance, $\E'$ contains terms like $\tilde{H} (F_a^{-1} - \frac{1}{\sqrt{n -rk} } I)$ with 
expected norm powers controlled by the second line of \eqref{eq:Fs} and the fact that the entries of $H$ and $\tilde{H}$ have bounded moments of all orders. For the latter the estimate \eqref{chiMeanVar} is supplemented with
$
    \ev | \chi_t - \ev \chi_t |^{2p} \le (c p)^p,
$
for $t > c'$ and $c = c(c')$.

Returning to the second line on the right hand side of \eqref{IncrementExpand1}, we have already remarked that all terms have mean zero and the expected norm  estimates follow similarly to those we just described. Now use $ \| F_0 F_a^{-1}  \| \le c$ and  $ \| F_a^{-1} \| \le \frac{c}{\sqrt{n-rk}}$ along with the moment control on the entries of $H$ and $\tilde{H}$.

Finally consider the remainder from \eqref{resolvent} and  \eqref{IncrementExpand1}, or 
$$
    \Xi :=   (F_0 + \tilde{H})  ( F_{a} - H )^{-1}   (H F_a^{-1})^{3}.  
$$
We will show that: as always in the regime of large $n-rk$, 
\begin{equation}
\label{lastI}
     \left[  F_0 \ev (F_a - H)^{-1} (H F_a^{-1})^3 \right]_{ij} = O \left( \frac{\delta_{ij}}{(n-rk)^{3/2}} \right), 
\end{equation}
that is, it is diagonal with diagonal entires bounded in kind, and that
 \begin{align}
 \label{lastE}
   \ev \|  \tilde{H}   ( F_{a} - H )^{-1}   (H F_a^{-1})^{3} \|^{2p} & =  O  \left( \frac{1}{(n-rk)^{4p}} \right), \\
   \ev \| F_0  \Bigl( (F_a - H)^{-1} (H F_a^{-1})^3  - E (F_a - H)^{-1} (H F_a^{-1})^3 \Bigr) \|^{2p}  & =  O  \left( \frac{1}{(n-rk)^{3p}} \right). \nonumber
 \end{align}
 With these established, $F_0 \ev (F_a - H)^{-1} (H F_a^{-1})^3$ can be added to $\E^0$ and the mean-zero matrices inside the norms in \eqref{lastE}
 can be absorbed into  to $\E^{1}$ and the proof will be complete.
 
 Starting with \eqref{lastI}, that $\ev [ (F_a - H)^{-1} (H F_a^{-1})^3 ]$ is diagonal follows from the Cayley-Hamilton theorem along with the fact that $E H^p$ is diagonal for any 
 integer power. To see that latter, expand out $(H^p)_{ij} = H_{i_0 i_1} H_{i_1 i_2} \cdots H_{i_{p-1} i_p}$, with $i_0 = i, i_p = j$ and note that unless $i=j$ there is at least one
 non-repeated (mean-zero) $H_{i_k i_{k+1}}$. This is simply because $H$ is upper triangular and so $i_k \le i_{k+1}$ along the expansion. The estimate itself can also be seen term-wise, by say writing out $(F_a - H)^{-1}$ using Cramer's rule. The bounds in \eqref{lastE} are similar. After an application of Cauchy-Schwarz both bounds come down to the inequality $\ev \| (F_a - H)^{-1} \|^{4p}$ $\le \frac{c}{(n-rk)^{2p}}$ which yet again follows from expanding things entry-wise and using  $\ev |H_{ij}|^q = O(1)$ for all $i,j$ and $q$.
 \end{proof}

\subsection{Tightness up to good times}
\label{sec:drectattempt}

We prove that for any $\epsilon > 0$, there is a $c \gg 1$ such that 
\begin{equation}
\label{eq:DiscreteNormBound}
      \|  A_n(x) \|^2 \le C_n (1-x)^{ -1 +  \frac{a+1}{r} - \epsilon}, \quad \mbox{ for } x \le 1 - c \frac{\log n}{n},
\end{equation}
where $C_n = C_n(\epsilon)$ is a tight sequence of random variables. Considering partial sums we will then conclude that

\begin{corollary}
\label{cor:DiscreteBound}
For $ \epsilon > 0$ it holds
\begin{equation}
  \label{eq:DiscreteNormBound2}
   \| A_n(x)^{-1} A_n(y) \| \le C_n  \left( \frac{1-y}{1-x} \right)^{-\frac{1}{2} +  \frac{a+1}{2r} - \epsilon}  \quad \mbox{ for } 0 \le x \le y \le 1 - c \frac{\log n}{n},
\end{equation}
with a  deterministic $c = c(\epsilon)$ and  a tight random sequence $C_n = C_n(\epsilon)$.
\end{corollary}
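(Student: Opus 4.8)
The plan is to prove the two-parameter estimate directly: \eqref{eq:DiscreteNormBound} is the special case $i=0$, and the corollary is precisely what "considering partial sums" yields, since $A_n(x_i)^{-1}A_n(x_j)=\prod_{k=i}^{j-1}(I+\delta_k)$ is again a product of the same type as $A_n(x_j)$, only started from the identity at step $i$ rather than at step $0$, so that every sum over time steps (of drift terms, of martingale increments, of Euler errors) becomes a partial sum from $i$ to $j-1$. Here $\delta_k=\tfrac{1}{\sqrt{n-rk}}\,G_k+\tfrac{-a+1/\beta}{2(n-rk)}\,I+\E_k$ is the one-step increment from Lemma \ref{lem:Euler}. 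Fix $i\le j$, put $P=\prod_{k=i}^{j-1}(I+\delta_k)$, and let $e^{\gamma_1}\ge\cdots\ge e^{\gamma_r}$ be the eigenvalues of $P^\dagger P$, so $\|A_n(x_i)^{-1}A_n(x_j)\|^2=e^{\gamma_1}$. Expanding $(I+\delta_k)^\dagger(\,\cdot\,)(I+\delta_k)$ and using $\ev[G_k]=0$, $\ev[G_kG_k^\dagger]=(r-1+\tfrac1\beta)I+O((n-rk)^{-1})$ and the second-order It\^o correction, the vector $(\gamma_1,\dots,\gamma_r)$ performs an approximate discrete version of the diffusion \eqref{eigsfornorm} — the same calculation carried out in the proof of Theorem \ref{thm:MultiRiccati} — with per-step drift of $\gamma_\ell$ equal to $\tfrac1{n-rk}\big(-a+\sum_{m\ne\ell}\tfrac{e^{\gamma_\ell}+e^{\gamma_m}}{e^{\gamma_\ell}-e^{\gamma_m}}\big)+O((n-rk)^{-3/2})$ and martingale increment of conditional variance $\tfrac{4/\beta}{n-rk}+O((n-rk)^{-2})$. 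The whole point of Lemma \ref{lem:Euler} is that the leftover matrices $\E_k$ obey \eqref{ErrorConditions}, whence $\sum_k\|\E_k^0\|$ and $\sum_k\|\E_k^1\|$ are a.s.\ finite with tight tails, so the $\E_k$'s contribute only an $O(1)$ additive perturbation to $\gamma_1$.

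Next I would bound $\gamma_1$ from above by $\gamma_r+\sum_{\ell=1}^{r-1}(\gamma_\ell-\gamma_{\ell+1})$, following the proof of Lemma \ref{lem:ContinuumHSBound} verbatim. For $\gamma_r$ use $\sum_{m\ne r}\tfrac{e^{\gamma_r}+e^{\gamma_m}}{e^{\gamma_r}-e^{\gamma_m}}\le-(r-1)$ to bound the per-step drift by $\tfrac{-a-r+1}{n-rk}$; for each gap $\gamma_\ell-\gamma_{\ell+1}$ compare with the discrete analog of $dz=db+2\,dt+4(e^z-1)^{-1}dt$ and use that $\sum_k(n-rk)^{-1}4(e^{\gamma_\ell-\gamma_{\ell+1}}-1)^{-1}$ is a.s.\ finite, bounding the gap drift by $\tfrac{2}{n-rk}$ plus an a.s.-summable term. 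Since $\sum_{k=i}^{j-1}(n-rk)^{-1}=\tfrac1r\log\tfrac{1-x_i}{1-x_j}+o(1)$, summing gives
\[
\gamma_1\;\le\;\Big(-1+\tfrac{a+1}{r}\Big)\log\tfrac{1-x_j}{1-x_i}\;+\;M_{i,j}\;+\;R_n,
\]
with $M_{i,j}$ the combined martingale, of quadratic variation $\asymp\log\tfrac{1-x_i}{1-x_j}$, and $R_n$ collecting the a.s.-bounded errors.

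The key — and the step that is "more involved" than at $r=1$ — is uniform control of $M_{i,j}$ by a quantitative law of the iterated logarithm: on an event of probability tending to one (as the bounding level tends to $\infty$) one wants $|M_{i,j}|\le C_n\big(1+(\log\tfrac{1-x_i}{1-x_j})^{1/2+\delta}\big)$ for all $i\le j$ with $x_j\le1-c\tfrac{\log n}{n}$ simultaneously, $C_n=C_n(\delta)$ tight. This is where the cutoff is used: keeping $n-rk\ge c\log n$ throughout the product is exactly the regime in which the Euler-error bounds \eqref{ErrorConditions} hold and in which the discrete martingale increments are small enough for the scalar LIL / maximal-inequality estimates to apply block by block (this is the matrix counterpart of the quantitative LIL for $\sum\log(\chi/\chi)$ used for \eqref{discrete_tightness} at $r=1$). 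Granting it, exponentiating the displayed bound gives
\[
\|A_n(x_i)^{-1}A_n(x_j)\|^2\;\le\;\Big(\tfrac{1-x_j}{1-x_i}\Big)^{-1+\frac{a+1}{r}}\exp\!\Big(C_n\big(1+(\log\tfrac{1-x_i}{1-x_j})^{1/2+\delta}\big)+R_n\Big),
\]
and since $s\mapsto(1+s)^{1/2+\delta}$ is sublinear one has, for any $\epsilon>0$, $C_n\big(1+(1+s)^{1/2+\delta}\big)\le 2\epsilon s+M(C_n,\epsilon,\delta)$ for all $s\ge0$ with $M$ finite and continuous in $C_n$. Taking $s=\log\tfrac{1-x_i}{1-x_j}$ converts the exponential factor into $e^{M(C_n,\epsilon,\delta)+R_n}\big(\tfrac{1-x_i}{1-x_j}\big)^{2\epsilon}$; taking square roots yields \eqref{eq:DiscreteNormBound2} with $C_n':=e^{(M(C_n,\epsilon,\delta)+R_n)/2}$, tight because $C_n$ and $R_n$ are. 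Specializing to $i=0$ recovers \eqref{eq:DiscreteNormBound}, and letting $j\downarrow i$ (together with $\tfrac1{\sqrt n}D_{\lceil n x/r\rceil}\Rightarrow\sqrt{1-x}\,I$ and the pathwise convergence $A_n\to A$, the corollary to Lemma \ref{lem:Euler}) also delivers the promised pointwise convergence $k_{n,r}\to k_r$.

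I expect the LIL step to be the main obstacle. Unlike at $r=1$ there is no linearization by taking logarithms of a matrix product, so the decomposition $\gamma_1=\gamma_r+\sum(\gamma_\ell-\gamma_{\ell+1})$ has to be run at the discrete level, and one must check carefully that, after the Euler rewriting of Lemma \ref{lem:Euler}, the martingale $M_{i,j}$ genuinely has conditional variance matched to $\tfrac{4/\beta}{n-rk}$ up to corrections that are summable right down to the cutoff; it is precisely this matching, together with the uniformity of the resulting LIL bound over all pairs $i\le j$, that forces the suboptimal exponent $-\tfrac12+\tfrac{a+1}{2r}-\epsilon$ rather than the sharp $-\tfrac12+\tfrac{a+1}{2r}$ of the continuum Corollary \ref{cor:Norms}.
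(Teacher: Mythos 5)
Your plan mirrors the continuum Lemma~\ref{lem:ContinuumHSBound}: discretize the eigenvalue diffusion \eqref{eigsfornorm}, write $\gamma_1=\gamma_r+\sum_\ell(\gamma_\ell-\gamma_{\ell+1})$, control $\gamma_r$ by its interaction-drift bound and each gap by comparison with a scalar SDE, then run a quantitative LIL. The paper does something structurally similar (partial products + quantitative LIL + the same cutoff) but deliberately \emph{avoids} the discrete eigenvalue evolution altogether, and I think that avoidance is the point you're missing.

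The paper's device, borrowed from Newman, is to pick a fixed $v\in S^{r-1}$ and exploit the telescoping identity
$v^\dagger \bigl(\prod_k Z_k\bigr)\bigl(\prod_k Z_k\bigr)^\dagger v=\prod_k v_k^\dagger Z_k^{} Z_k^\dagger v_k$
with $v_k$ the $\mathcal F_{k-1}$-measurable unit vectors defined in \eqref{eq:Qdef}. Taking logarithms of the right-hand side gives a genuine scalar sum, $Q_x(v)=\sum_k\log(v_k^\dagger Z_kZ_k^\dagger v_k)-(\mathrm{centering})$, whose additive increments are conditionally Gaussian-like. The paper then bounds this sum directly, via the log-Sobolev/Herbst MGF estimate \eqref{chi_mgf}, the exponential supermartingale \eqref{EXPmartingale}, and the DKL quantitative LIL \eqref{LASTIL}, and finally assembles $\tr(A_n^\dagger A_n)=\sum_\ell e^{Q_x(u_\ell)}$ from finitely many choices of $v$. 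Crucially, this route \emph{does} linearize the matrix product — contrary to your worry that ``unlike at $r=1$ there is no linearization by taking logarithms'' — precisely because the Newman vectors $v_k$ absorb the noncommutativity. You never have to track $r$ coupled eigenvalues.

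Your discrete eigenvalue route, by contrast, has a real gap that the paper never has to face. The continuum gap comparison (the scalar SDE $dz=db+2\,dt+4(e^z-1)^{-1}\,dt$ and the conclusion that $\int_0^\infty (e^{z_t}-1)^{-1}dt<\infty$) relies on the repulsion keeping $z_t>0$ for all $t$, which is a consequence of the diffusion's non-crossing property. A discrete-time Euler-type scheme has no such guarantee: at a single step the gap $\gamma_\ell-\gamma_{\ell+1}$ can close (or cross), the per-step perturbation theory for the eigenvalues of $P^\dagger P$ breaks down, and the term $\sum_k(n-rk)^{-1}4(e^{\gamma_\ell-\gamma_{\ell+1}}-1)^{-1}$ has no a priori reason to be finite. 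You acknowledge this is ``the main obstacle,'' but it is not merely a technical inconvenience to finesse — it is exactly what the Newman trick is designed to bypass. Your auxiliary observations are fine (that the corollary should follow from the $i=0$ bound by considering partial products, that the cutoff $n-rk\gtrsim \log n$ is what keeps the Euler error terms under control, and that the loss of an $\epsilon$ in the exponent comes from the quantitative LIL); the gap is concentrated in the discrete eigenvalue dynamics, and the fix is to replace that analysis with Newman's scalar reduction as the paper does.
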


This should be compared to the estimate in Corollary \ref{cor:Norms} for the continuum case, and finishes  Step 2 of the proof of Theorem \ref{thm2crux}. Note that a tight random upper bound of the form $ \sqrt{n} D_{[nx/r]}^{-1}  \le C_n' (1-x)^{-1/2} I$ poses no difficulty, and also that the righthand side of \eqref{eq:DiscreteNormBound2} is in $L^2 [ \{0 < x < y < 1 \}, \frac{1}{1-x} dx \times dy ]$  as long as $\epsilon < \frac{a+1}{2r}$.

As for \eqref{eq:DiscreteNormBound}, the proof is inspired by \cite{Newman}.
From Lemma \ref{lem:Euler} we can express $A_n(x) = \prod_{k=1}^{[nx/r]}  Z_k$
with
\begin{equation}
\label{eq:Zthing}
     Z_k = I +  \frac{1}{\sqrt{ n- rk}} G_{k} +  \frac{(- a +\frac{1}{\beta})}{2 (n-rk)} I + \E_k,
\end{equation}
where the $G_k$ are  independent approximate $\mathbb{F}$-Gaussian matrices and  the error matrices $\E_k^{}$
possess the decomposition
$
  \E_k = \E_k^0 + \E^1_{k}
$  
in which $\E_k^0$ are (deterministic) diagonal matrices satisfying $\| \E_k^0 \| \le c (n-rk)^{-3/2}$ and $\ev \E_k^1 = 0, \ev \| \E_k^1 \|^{2p}  \le c(n-rk)^{-2p}$, for $n-rk > c'(c)$, $p \ge 1$.

Toward controlling the norm of $A_n(x)$, choose a fixed  $v \in S^{r-1}(\mathbb{F})$ and consider
\begin{equation}
\label{eq:InnerProductThing}
   v^{\dagger} A_n(x) A_n(x)^{\dagger} v = v^{\dagger} ( \prod_{k=1}^{[nx/r]} Z_k ) (\prod_{k=1}^{[nx/r]} Z_k)^{\dagger} v =  \prod_{k=1}^{[nx/r]} v_k^{\dagger} Z_k^{} Z_k^{\dagger} v_k^{},
\end{equation}
where the introduced vectors $v_k \in S^{r-1}(\mathbb{F})$ are defined by
$$
  v_k  =  
  \frac{ (Z_{k-1}^{} Z_{k-1}^{\dagger}) (Z_{k-1}^{} Z_{k-2}^{\dagger} )\cdots (Z_1^{} Z_1^{\dagger}) 
  v}{ \|  (Z_{k-1}^{} Z_{k-1}^{\dagger}) (Z_{k-1}^{} Z_{k-2}^{\dagger} )\cdots (Z_1^{} Z_1^{\dagger})  v \|^{1/2}}
  \mbox{ for } k  > 1, \quad v_1 =v. 
$$  
In particular, each $v_k$ is $\mathcal{F}_{k-1}$ measurable. With this set-up,  \eqref{eq:DiscreteNormBound} will follow from:

\begin{proposition} Set
\begin{equation}
\label{eq:Qdef}
Q_x  = Q_x(v) = \sum_{k=1}^{[nx/r]}   \log \Bigl(  v_k^{\dagger} Z_k^{}  Z_k^{\dagger} v_k^{} \Bigr) - \frac{(r-a-1 +  \frac{1}{2} {\epsilon})}{n-rk},  
\end{equation}
the centered logarithm of \eqref{eq:InnerProductThing},
and  $ h(x) = 
  (1- \log(1-x))^{-1/2+}$. Then, with choice of $c$ depending on $\epsilon > 0$, it holds that
\begin{equation}
\label{OneLineTight}
   0 \le  \sup_{x \le 1- c \frac{\log n}{n}}  h(x) Q_x  \le C_n
\end{equation}
for $C_n$ a tight sequence (dependent on $c$ but independent of $v$).
\end{proposition}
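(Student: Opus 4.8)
The plan is to follow the method of \cite{Newman}: regard $x\mapsto Q_x(v)$ as a discrete-time process in $k=[nx/r]$ adapted to $\{\mathcal F_k\}$, show it is a supermartingale from $Q_0=0$ up to the cutoff with a deterministically bounded predictable quadratic variation, and then extract \eqref{OneLineTight} from a maximal inequality that plays the built-in negative drift against that quadratic variation. The first and central step is the one-step conditional drift. Substituting $Z_k=I+(n-rk)^{-1/2}G_k+\tfrac{-a+1/\beta}{2(n-rk)}I+\E_k$ from Lemma~\ref{lem:Euler} into the factors of \eqref{eq:InnerProductThing}, write $v_k^\dagger Z_k^{}Z_k^\dagger v_k^{}=1+u_k$ with
\[
u_k=\frac{2\,\mathrm{Re}\!\big(v_k^\dagger G_k^{} v_k^{}\big)}{\sqrt{n-rk}}+\frac{v_k^\dagger G_k^{}G_k^\dagger v_k^{}-a+\tfrac1\beta}{n-rk}+(\text{error}),
\]
expand $\log(1+u_k)=u_k-\tfrac12u_k^2+O(u_k^3)$, and take $\EE[\,\cdot\mid\mathcal F_{k-1}]$ using that $v_k$ is $\mathcal F_{k-1}$-measurable while $G_k$ is independent of $\mathcal F_{k-1}$. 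Since $\mathrm{Re}(v_k^\dagger G_kv_k)$ is a real linear combination of the entries of $G_k$, it is exactly Gaussian; invoking the $\chi$-moment expansion \eqref{chiMeanVar} for the (real) diagonal entries of $G_k$ and the unit normalization of its off-diagonal $\mathbb F$-Gaussian entries, the three $O((n-rk)^{-1})$ contributions — the drift $\tfrac{-a+1/\beta}{n-rk}$, the conditional mean $\tfrac1{n-rk}\EE\|G_k^\dagger v_k\|^2$, and the Jensen correction $-\tfrac12\EE[u_k^2\mid\mathcal F_{k-1}]$ — collapse to the clean coefficient $r-1-a$: exactly, and with no residual dependence on the unit vector $v_k$, when $\beta=1,2$, and bounded above by $r-1-a$ when $\beta=4$ (equality only for coordinate $v_k$). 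Hence $\EE[\Delta Q_k\mid\mathcal F_{k-1}]\le-\tfrac{\epsilon/2}{n-rk}+O((n-rk)^{-3/2})\le0$ once $n-rk$ exceeds a threshold $c'(\epsilon)$, so choosing the cutoff constant $c$ large enough (in terms of $\epsilon$) that $n-rk\ge c\log n\ge c'(\epsilon)$ throughout $k\le[nx/r]$, $x\le1-c\log n/n$, makes $Q$ a supermartingale there; the inequality $0\le\sup_x h(x)Q_x$ is immediate from $Q_0=0$ and $h(0)=1$.

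Next I would bound the predictable quadratic variation of the martingale part $M$ of $Q$. The dominant contribution to $\mathrm{Var}(\Delta Q_k\mid\mathcal F_{k-1})$ is that of the Gaussian term $\tfrac{2}{\sqrt{n-rk}}\mathrm{Re}(v_k^\dagger G_kv_k)$, whose conditional variance is $\le\tfrac{\kappa_0}{n-rk}$ with $\kappa_0$ a $v$-independent bound for $\EE[(2\mathrm{Re}\,v^\dagger G_kv)^2]$; summing gives $\langle M\rangle_x\le\tfrac{\kappa_0}{r}\log\tfrac1{1-x}+C$ deterministically. Moreover the increments of $M$ are conditionally sub-Gaussian of parameter $O((n-rk)^{-1})$ off an event of conditional probability $e^{-c(n-rk)}$ (its complement forcing $\|G_k\|\gtrsim\sqrt{n-rk}$), so standard MGF/Doob estimates are available for $|\theta|\le c\sqrt{n-rk}$ — a range covering every constant below, since $n-rk\ge c\log n$ up to the cutoff. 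Writing $L=L(x)=1-\log(1-x)$, so that $h(x)=L^{-1/2+}$: for any $L_0$, a dyadic union bound over the blocks $\{L\in[2^{j},2^{j+1})\}$ gives $\PP\big(\exists x\text{ with }L_0\le L(x),\ x\le\text{cutoff}:\ M_x>\tfrac{\epsilon}{4r}L(x)\big)\le Ce^{-cL_0}$, the point being that controlling a deviation linear in $L$ against a quadratic variation linear in $L$ requires only an $O(1)$ value of $\theta$ on each block. On that event the supermartingale inequality yields $Q_x\le M_x-\tfrac{\epsilon}{2r}(L-1)+C\le-\tfrac{\epsilon}{4r}L+C'<0$ whenever $L\ge L_0$, once $L_0$ is chosen large in terms of $\epsilon$; consequently $\sup_x h(x)Q_x=\big(\sup_{x:\,L(x)<L_0}h(x)Q_x\big)^{+}$, the supremum now over the \emph{fixed} subinterval $[0,1-e^{1-L_0})$. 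There $h\le1$, $\langle M\rangle$ is bounded by $\tfrac{\kappa_0}{r}L_0+C$, and $Q_x\le M_x$, so Doob's maximal inequality gives $\PP(\sup_{x:\,L(x)<L_0}M_x>M)\le e^{-cM^2/L_0}$. Combining, $\PP(\sup_x h(x)Q_x>M)\le Ce^{-cL_0}+e^{-cM^2/L_0}$ for all large $n$ (small $n$ being a finite collection of a.s.\ finite quantities) and all $v$; choosing $L_0$ then $M$ large yields the tight $C_n$, and passing to a genuinely $v$-uniform $C_n$ — as needed for \eqref{eq:DiscreteNormBound} — is a routine $\epsilon$-net argument on $S^{r-1}(\mathbb F)$, whose dimension $r$ is fixed.

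The main obstacle is the drift identity of the first paragraph. One must carefully track the differing second-moment structures of the diagonal entries of $G_k$ (sums of centered real $\chi$'s, variance $\sim\tfrac1\beta$) and of its off-diagonal entries (unit $\mathbb F$-Gaussians), and verify that they combine with the $\tfrac{-a+1/\beta}{n-rk}$ drift inherited from the resolvent expansion and with the Jensen correction to produce exactly $r-1-a$, the $v_k$-dependence being either absent ($\beta=1,2$) or of the favourable sign ($\beta=4$); were this cancellation to fail by an amount of order $(n-rk)^{-1}$, the over-centering by $\tfrac{\epsilon/2}{n-rk}$ could not restore a supermartingale uniformly in $v_k$ and the scheme would collapse. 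A secondary, routine burden is checking that the error matrices $\E_k$ and the non-Gaussian (sub-exponential, rarely large) part of the increments perturb the drift only at order $(n-rk)^{-3/2}$ plus a mean-zero remainder and add a summable amount to $\langle M\rangle$, and that all constants are uniform in $n$ and $v$ — which is precisely what forces the cutoff at distance $c\log n/n$ from $1$ and the dependence of $c$ (and of the auxiliary $L_0$) on $\epsilon$. Granting the drift identity, the maximal estimate is elementary: it uses only that $h^{-1}=L^{1/2-}$ grows slowly enough, and the negative drift is linear enough in $L$, that after the single reduction to a fixed compact $L$-range no law-of-the-iterated-logarithm subtlety survives.
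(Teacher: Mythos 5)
Your proof is correct in outline, but it follows a genuinely different route from the paper's. The paper \emph{does not} establish that $Q$ is a supermartingale. Instead, it first restricts to a high-probability event $\Omega_n$ on which $|v_k^\dagger Z_k Z_k^\dagger v_k - 1|\le\delta$ holds uniformly over the relevant $k$, then applies the one-sided pointwise inequality $\log(1+x)\le x-\tfrac{1}{2}(1-\delta)x^2$ (no Taylor remainder to control), decomposes the resulting upper bound on $Q_x$ into the Gaussian-like sum $\sum_k X_k(v_k)$, the centered $Y_k$ and $X_k^2$ sums, and error terms, and treats each piece separately: the $\sum X_k$ term via an exponential supermartingale $\Xi_m$ combined with a self-normalized LIL imported from \cite{DKL}, and the remaining centered pieces via a second-moment $L^2$ maximal argument. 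Your approach replaces all of this with a single structure: you compute the exact conditional drift (showing it is $\le -\tfrac{\epsilon/2}{n-rk}+O((n-rk)^{-3/2})$), conclude $Q$ is a supermartingale past the cutoff, bound the predictable quadratic variation of its martingale part by $\tfrac{\kappa_0}{r}\log\tfrac{1}{1-x}+O(1)$, and run a dyadic exponential maximal argument that plays the negative linear-in-$L$ drift against the linear-in-$L$ variance. This is more unified and avoids invoking \cite{DKL}; the price is that you need the full Taylor expansion of $\log(1+u_k)$ with a uniformly (in $v_k$) controlled remainder rather than the crude one-sided inequality, which is a heavier computation — in effect your drift cancellation does in one step what the paper's decomposition spreads across several cheaply-controllable pieces. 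Both proofs make the same use of the $\chi$ log-concavity/sub-Gaussianity and reach identical constants.

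One inaccuracy worth flagging (it does not break your argument): your remark that the drift coefficient depends on $v_k$ for $\beta=4$, with a ``favourable sign,'' is not right. A direct computation gives $\EE\|G_k^\dagger v\|^2 = (r-1)+\tfrac{1}{\beta}$ and $\EE\bigl[(v^\dagger(G_k+G_k^\dagger)v)^2\bigr]=\tfrac{4}{\beta}$, both exactly independent of the unit vector $v$ for each of $\beta=1,2,4$ (the quaternionic case included, since $\mathrm{Re}(\bar v_i S_{ij} v_j)$ has variance $\tfrac{1}{2}|v_i|^2|v_j|^2$ by the invariance of $\mathrm{Re}$ under cyclic permutation); any $v$-dependence is relegated to $O((n-rk)^{-1})$ corrections coming from the $\chi$-variance expansion, which your $\tfrac{\epsilon}{2}$ over-centering absorbs. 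Similarly, your closing $\epsilon$-net remark is unnecessary for the paper's application of this proposition, which only instantiates $v$ at the $r$ standard basis vectors; but a $v$-uniform tail bound is what both proofs actually deliver.
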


Indeed, just write: for $x$ in the prescribed range and $u_{\ell}$, $\ell =1 \dots, r$ the standard basis vectors,
\begin{align*}
  \tr (A_n(x)^{\dagger} A_n(x) ) & = e^{\sum_{k=1}^{[nx/r]} \Bigl( \frac{r-a-1+ \epsilon/2}{n-rk} \Bigr)}  \sum_{\ell=1}^r e^{Q_x( u_{\ell})} \\
                                                 &  \le 2 r e^{ (1 - \frac{a+1}{r} + \frac{\epsilon}{2r}) \log \frac{1}{(1-x)} } e^{ C_n (1 +   ( \log  \frac{1}{(1-x)} ) ^{1/2+}  )} \\
                                                 & \le  C_n' (1-x)^{-1 + \frac{a+1}{r} - {\epsilon}}. 
\end{align*}
The first inequality uses \eqref{OneLineTight} in the second factor and a rough upper bound in  the first (valid for large enough $n$). The second inequality just introduces a new tight random sequence $C_n'$, based on $C_n$, $\epsilon$, and $r$.

\begin{proof}  
First, since we only require a tight random constant bound, we are free to restrict to an event $\Omega_n$ with probability tending to one as $n \rightarrow \infty$. Let $\Omega_n$ be the event on which all the $\mathbb{F}$-Gaussians appearing in the collection of $Z_k$ matrices $k \le 1 - 
c \frac{\log n}{n}$ are bounded in norm by $c' \log n$, and all the $\chi$ random variables are  within $c' \log n$ of their mean. The claim is that by choice of $c'$ the probabilities $\pr (\Omega_n)$ are in fact summable, and on $\Omega_n$ it holds that
\begin{equation}
\label{Znorm}
  | v^{\dagger} Z_k^{} Z_{k}^{\dagger} v -1| \le   \frac{c'' \sqrt{\log n}}{\sqrt{n-k}},
\end{equation}
where $c''$  depends on $c'$ only. In the next subsection (``Throwing out the tail") we prove a version of this claim in a more difficult setting $-$ see in particular \eqref{AAbound1} and its subsequent proof $-$  and so do not repeat the argument here.

By taking  $c$ sufficiently large the inequality \eqref{Znorm} can be taken to read 
$  | v^{\dagger} Z_k^{} Z_{k}^{\dagger} v - 1 | \le  \delta$ with a choice of $\epsilon > 0$ for  all  $k \le n- c \log n$. This leads to the bound: on $\Omega_n$ and for $x \le 1 - c \frac{\log n}{n}$,
\begin{equation}
\label{PrelimQ}
  \sum_{k=1}^{[nx/r]}   \log \Bigl(  v_k^{\dagger} Z_k^{}  Z_k^{\dagger} v_k^{} \Bigr)  \le  \sum_{k=1}^{[nx/r]} ( v_k^{\dagger} Z_k^{} Z_{k}^{\dagger} v_k - 1) - \frac{(1-\delta)}{2}   \sum_{k=1}^{[nx/r]} ( v_k^{\dagger} Z_k^{} Z_{k}^{\dagger} v_k - 1)^2,
\end{equation}
as $\log (1+x) \le x - \frac{1}{2}(1-\delta) x^2$ for $x < \delta$.

Next denote
\begin{equation}
\label{eq:Mthing}  
Z_k^{} Z_k^{\dagger} = I +   X_k  +  Y_k  +\frac{(- a +\frac{1}{\beta})}{ (n-rk)} I + \E_k^{'}
\end{equation}
for
\begin{align}
\label{XY}
  X_k =  \frac{1}{\sqrt{n-rk}} (G_k^{} + G_k^{\dagger}),   \quad Y_k = \frac{1}{n-rk}  G_k^{} G_k^{\dagger}, 
\end{align}
and
\begin{align}
\label{ErrorNew}
  \E_k^{'} =  ( \E_k + \E_k^{ \dagger} ) + \E_k^{} \E_k^{ \dagger}    +  \frac{(- a +\frac{1}{\beta})^2}{4 (n-rk)^2} I  +\mathrm{ etcetera}, 
\end{align}
yet another error term. 
Also introduce the
shorthand:
$ X_k(v) =   v^{\dagger} X_k v,$  
$Y_k(v) = v^{\dagger} Y_k v,$
 and so on 
for any $v \in S^{r-1}$.

By choosing $\delta = \frac{\beta}{4} \epsilon$ the inequality \eqref{PrelimQ} implies that
\begin{align}
\label{Qbound}
Q_x  \le    \sum_{k=1}^{[nx/r]}  &  \left( X_k(v_k) +  ( Y_k(v_k)  -  \frac{(r-1 - \frac{1}{\beta})}{ (n-rk)}  ) +
      (1-\delta) (\frac{2}{\beta (n-rk)} - \frac{1}{2} X_k(v_k)^2) \right) \\
      & +  \sum_{k=1}^{[nx/r]}  \Bigl( \E_k'(v_k) + W_k(v_k)^2 + 2 (1-\delta)   X_k(v_k) W_k(v_k) \Bigr), \nonumber
\end{align}
in which $W_k(v) = Y_k(v) + \frac{(- a +\frac{1}{\beta})}{ (n-rk)}  + \E_k^{'}(v)$. Intuitively, line one of \eqref{Qbound} produces the leading order with the terms in line two having greater built-in decay. 

As for that first line, we consider each term separately. That is, we demonstrate tight random constant bounds on:
\begin{equation}
\label{Xterm}
   \sup_{x\le 1 - c \frac{\log n}{n}}  h(x) \left( \sum_{k=1}^{[nx/r]} X_k(v_k)  \right), 
\end{equation}  
and
\begin{align}
\label{XXYterms}
 \sup_{x\le 1 - c \frac{\log n}{n}} &  h(x) \left( \sum_{k=1}^{[nx/r]}     Y_k(v_k)  -  \frac{(r-1 - \frac{1}{\beta})}{ (n-rk)}   \right), \\
  \sup_{x\le 1 - c \frac{\log n}{n}}  &   h(x) \left( \sum_{k=1}^{[nx/r]}  \frac{4}{\beta (n-rk)} - X_k(v_k)^2 \right).
  \nonumber
 \end{align}
Of these, the near Gaussian sum in \eqref{Xterm} is the most delicate,  requiring the type of quantitative iterated log estimate carried out in the end of Section 2 of \cite{RR}. The $Y_k$ and $X_k^2$ sums can be dealt with more crudely as follows.

Using \eqref{chiMeanVar} one easily finds that
$$
   \ev Y_k(v) = \frac{r-1-\frac{1}{\beta}}{(n-rk)} + O\left( \frac{1}{(n-rk)^2} \right), \quad
   \ev X_k(v)^2 =  \frac{4}{\beta (n-rk)} + O\left( \frac{1}{(n-rk)^2} \right)
$$
uniformly in $v \in S^{r-1}$. And since $\sum_{k=1}^{[(n- c \log n)/r]} \frac{1}{(n-rk)^2} = O \left( \frac{1}{\log n} \right)$, the sums in \eqref{XXYterms} can be replaced by $\sum_{k=1}^{[nx/r]} \eta_k(v_k)$ with
$$
     \eta_k(v_k) = Y_k(v_k) -  \ev Y_k(v_k)  \quad \mbox{ or }   \quad   \eta_k(v_k) = \ev X_k(v_k)^2 -  X_k(v_k)^2, 
$$
respectively. In either case it holds that  $\ev \eta_k(v_k) = 0$, $\ev  \eta_\ell(v_\ell) \eta_k (v_k) = 0$ for $\ell \neq k$, and $ \ev  | \eta_k(v_k) |^2  \le \frac{c'}{(n-rk)^2}$  with a $c'$ independent of $v_k$ (and so the original $v$). Taking second moments and bounding the sup by another sum, we have that
\begin{align*}
  \ev \sup_{x\le 1}  \left( h(x) \sum_1^{[ nx/r ] } \eta_k(v_k) \right)^2  & \le \sum_{k=1}^{[ n/r ]} h^2(k/nr)
   \ev \left(\sum_{\ell=1}^k    \eta_k(v_k)\right)^2 \\
      & \le c' \sum_{k=1}^{ [ n/r ]} h^2(k/nr) \sum_{\ell=1}^k  (n-r \ell)^{-2}  \\
      & \le c'' \int_0^1 \frac{dy}{(1-y) (1+ |\log (1-y)|^{1+})},
\end{align*}
which is finite.

Line two of \eqref{Qbound} requires more of the same. In fact, the sum over $v_k^{\dagger} (\E_k^{} + \E_k^{ \dagger} ) v_k^{}$ (the first term in $\E_k^{'}(v_k)$) can be treated by the exact same $L^2$ bound just performed. Subsequent terms simply require taking higher moments, making use of the estimates \eqref{ErrorConditions} from Lemma \ref{lem:Euler}.

Returning to 
$\sum _{k=1}^{[ nx/r ]} X_k(v_k)$, the first ingredient is the following bound for chi-variables (appearing in the diagonals of the $X_k$ matrices):
\begin{equation}
\label{chi_mgf}
\ev \left[ e^{\lambda  \chi_t} \right] \le e^{\lambda \ev \chi_t + \frac{\lambda^2}{2}  } \mbox{ for all } \lambda \in \RR.
\end{equation}
This follows from the log-concavity of the $\chi_t$ density, which yields a Log-Sobolev inequality.  With this in hand, the well known Herbst's argument  gives \eqref{chi_mgf}, see for example  \cite{Ledoux}. 

Then,  for $X = $ a given $X_k$  (and likewise  $G$ denoting the corresponding approximate Gaussian matrix $G_k$) and fixed $v \in S^{r-1}$,  
\begin{align}
\label{term_mgf}
  \ev \left[ e^{\lambda X(v)} \right]  & = \ev \left[ e^{ \sum_i  \frac{2 \lambda}{\sqrt{n-rk}}  G_{ii} |v_i|^2 }   \right] 
    \ev \left[
  e^{ \sum_{i\neq j}   \frac{\lambda}{\sqrt{n-rk}}  (G_{ij} v_i^{\dagger} v_j^{} +  G_{ij}^{\dagger} v_i^{} v_j^{\dagger})    } \right]  \\
  & \le e^{  \sum_i  \frac{4 \lambda^2}{ \beta ({n-rk})}  |v_i|^4 }  e^{ \sum_{i \neq j} \frac{2 \lambda^2}{ \beta ({n-rk})}  |v_i|^2 |v_j|^2 } \nonumber \\
  & \le e^{ (4/\beta) \lambda^2 \frac{1}{(n-rk)}}. \nonumber
\end{align}
Recall the definition of $G$ in \eqref{Gdiag} from Lemma \ref{lem:Euler}: the first overestimate above comes from using
\eqref{chi_mgf}  in the first factor of \eqref{term_mgf}. The outcomes is that
\begin{equation}
\label{EXPmartingale}
   \Xi_m := e^{ \lambda  \sum_{k=1}^m  X_k(v_k) - (4/\beta) \lambda^2 \sum_{k=1}^m \frac{1}{n-rk}}, \mbox{ for } m = 1, 2, \dots, [ n/r ]
\end{equation}
is a supermartingale. Now Corollary 4.2 of \cite{DKL} implies that: with probability one, 
\begin{equation}
\label{LASTIL}
  \limsup_{n \rightarrow \infty, x_n \rightarrow 1} \frac{ \sum_{k=1}^{ [nx_n/r]}  X_k(v_k)}{ \sqrt{ ( \sum_{k=1}^{[nx_n/r]} \frac{1}{n-rk} ) \log \log (\sum_{k=1}^{[nx_n/r]}  \frac{1}{n-rk})   }} \le c(\beta)   
 \end{equation}
with a constant $c(\beta)$ that can be explicitly worked out.  Here $x_n$ indicates any sequence tending to one as $n$ tends to infinity.
This is more than is needed to complete the proof. Here we are applying Condition 4.1 of the reference to \eqref{EXPmartingale} with $\Phi_r(s) = \frac{1}{2} s^2$ and
$B_t = B_{n,t_n} = $ $2 \sqrt{  (2/\beta)\sum_{k=1}^{[nt_n/r]} \frac{1}{n-rk}}$ with $t_n \rightarrow 1$. The obvious adaptation of the ideas there to the ``triangular array" of variables here yields \eqref{LASTIL}.
\end{proof}

\subsection{Throwing out the tail}
\label{sec:ThrowOutTail}

We establish Step 3 of the proof of Theorem \ref{thm2crux}.

\begin{lemma} 
\label{lem:ThrowOutTail}
For any large enough constant $c$,
\begin{equation}
\label{taileq}
   \iint\limits_{ {0 \le x \le y \le 1},  \, { y \ge 1- c \frac{\log n}{n} } } \| k_{n,r}(x,y) \|^2 \, dy dx  \rightarrow 0,
\end{equation}
in probability. 
\end{lemma}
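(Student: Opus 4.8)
The plan is to split the tail region $\{0\le x\le y\le 1,\ y\ge z_n\}$, $z_n := 1-c\tfrac{\log n}{n}$, into the pieces $x\le z_n\le y$ (region A) and $z_n\le x\le y$ (region B), and to control each by combining three ingredients: (i) the crude operator bound $\|k_{n,r}(x,y)\|\le \tfrac{\sqrt n}{r}\|D_{[nx/r]}^{-1}\|\,\|A_n(x)^{-1}A_n(y)\|$ read off from \eqref{discreterkernel2}; (ii) the multiplicativity $A_n(x)^{-1}A_n(y)=\prod_{[nx/r]<k\le[ny/r]}Z_k$ with $Z_k$ as in \eqref{eq:Zthing}; and (iii) the fact that the slab $\{y\ge z_n\}$ has Lebesgue measure $O(\log n/n)$, which will beat the (at worst polylogarithmic) growth of the kernel there.

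First I would extend the estimate of Corollary \ref{cor:DiscreteBound} from times $\le z_n$ to times $\le s_n := 1-N_0/n$, where $N_0=N_0(\beta,a,r)$ is a large fixed integer past which the Euler representation of Lemma \ref{lem:Euler} still applies. On a good event $\Omega_n$ (with $\pr(\Omega_n)\to1$) on which every $\mathbb F$-Gaussian and centered $\chi$ appearing in the $Z_k$ with $n-rk\ge N_0$ is at most $c'\log n$ in absolute value, the computation behind \eqref{Znorm} should upgrade, uniformly in unit vectors $v$ and in such $k$, to
\begin{equation}
\label{AAbound1}
   \bigl|\,v^{\dagger}Z_k^{}Z_k^{\dagger}v - 1\,\bigr|\ \le\ c''\sqrt{\frac{\log n}{\,n-rk\,}}\,,
\end{equation}
the extra $\sqrt{\log n}$ being the price for pushing $k$ all the way to $n-rk=N_0$. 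Feeding \eqref{AAbound1} into the supermartingale/telescoping argument of the previous subsection should give, on $\Omega_n$, $\|A_n(x)^{-1}A_n(y)\|\le C_n\bigl(\tfrac{1-y}{1-x}\bigr)^{\theta}$ for all $0\le x\le y\le s_n$ with $\theta=-\tfrac12+\tfrac{a+1}{2r}-\epsilon$ and $C_n$ tight, together with $\sqrt n\,\|D_{[nx/r]}^{-1}\|\le C_n'(1-x)^{-1/2}$ for $x\le s_n$. The remaining $O(1)$ ``edge'' indices ($[ns_n/r]<k\le[n/r]$) I would absorb into a single heavy-tailed but tight random constant: the product $M_n(y):=\prod_{[ns_n/r]<k\le[ny/r]}O_kD_{k+1}^{-1}$ (recall \eqref{eq:An}) is a.s.\ finite, and since the $\chi$-parameters there do not depend on $n$, the sequences $\sup_{y\in[s_n,1]}\|M_n(y)\|$ and $\sup_{x\in[s_n,1]}\|D_{[nx/r]}^{-1}\|$ are tight.

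With these in hand the two regions are mechanical. In region A, factor $A_n(x)^{-1}A_n(y)=(A_n(x)^{-1}A_n(z_n))(A_n(z_n)^{-1}A_n(y))$; integrating the $x$-variable first produces $(1-z_n)^{2\theta}\int_0^{z_n}(1-x)^{-1-2\theta}dx$, which is bounded (or $O(\log n)$ in the borderline case $\theta=0$) times a tight constant, and leaves $\int_{z_n}^1\|A_n(z_n)^{-1}A_n(y)\|^2dy$. Splitting this at $s_n$: the $[z_n,s_n]$ part is $\le C_n^2(1-z_n)^{-2\theta}\int_{N_0/n}^{c\log n/n}u^{2\theta}du=O(\log n/n)$ (since $2\theta>-1$ for small $\epsilon$), and the $[s_n,1]$ part is $\le C_n^2\,\mathrm{polylog}(n)\cdot\sup_y\|M_n(y)\|^2\cdot(N_0/n)$. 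In region B both the $x$- and $y$-intervals have length $\le c\log n/n$; splitting $x$ at $s_n$, the moderate part integrates $(1-x)^{-1}$ over a set of measure $\le 1-z_n$, while in the edge part the dangerous factor $n\|D_{[nx/r]}^{-1}\|^2$ (of size $n$ times tight) is integrated against a square of area $\le (N_0/n)^2/2$. In every case the bound is $\mathrm{polylog}(n)/n$ times a tight random variable, hence tends to $0$ in probability; together with $\pr(\Omega_n^c)\to0$ this yields \eqref{taileq}.

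The step I expect to be the main obstacle is the extension in the second paragraph, i.e.\ establishing \eqref{AAbound1} and propagating it through the matrix-product norm bound up to $s_n$. This is the ``more difficult setting'' flagged in the outline: for $n-rk$ as small as $N_0$ the per-step fluctuation scale $1/\sqrt{n-rk}$ is no longer uniformly small, so one must rerun the near-Gaussian supermartingale estimate and the $Y_k,\E_k$ corrections with $\log n$-sized worst-case deviations rather than genuine increments, and carefully track the dependence of the constants on the cutoff $c$. Everything downstream is a routine combination of H\"older and operator-norm inequalities with the $O(\log n/n)$ measure of the slab, the only extra subtlety being the heavy-tailed edge blocks $-$ harmless precisely because their number stays $O(1)$ as $n\to\infty$.
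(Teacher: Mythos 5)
The critical gap is in your second paragraph, which you yourself flag as the main obstacle: the claim that the polynomial estimate of Corollary~\ref{cor:DiscreteBound} can be extended to all $y\le s_n=1-N_0/n$ by ``rerunning the supermartingale estimate with $\log n$-sized worst-case deviations.'' This extension does not hold, and the paper's proof takes a genuinely different route through the slab.

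The supermartingale/telescoping argument behind Corollary~\ref{cor:DiscreteBound} hinges on the elementary inequality $\log(1+z)\le z-\tfrac{1-\delta}{2}z^2$, valid only for $|z|\le\delta$ with $\delta$ small and fixed. With the concentration available on your good event, $|v^\dagger Z_k^{}Z_k^\dagger v-1|\lesssim\sqrt{\log n/(n-rk)}$, this smallness requirement is met only when $n-rk\gtrsim(\log n)/\delta^2$; for $n-rk$ of order $N_0$ the per-step increment is of order $\sqrt{\log n}$ and the Taylor expansion is worthless. So the polynomial bound $\|A_n(x)^{-1}A_n(y)\|\le C_n((1-y)/(1-x))^\theta$ with tight $C_n$ simply does not propagate past $y=1-c\tfrac{\log n}{n}$ for any constant $c$, and the ``moderate part'' of your region~B (and the $[z_n,s_n]$ piece of your region~A) has no usable bound.

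Worse, the fall-back option of a naive telescoped product bound with $\sqrt{\log n}$-scale fluctuations is too coarse to be saved by the $O(\log n/n)$ measure of the slab. The factors $1+c''\sqrt{\log n/(n-rk)}$ over the roughly $(c-1)(\log n)/r$ indices with $n-rk\in[\log n,c\log n]$ are all of order $1+\Theta(1)$, so this product alone is $\ge(1+c'')^{(c-1)(\log n)/r}=n^{\Theta(c)}$, a polynomial in $n$ of degree that grows with $c$; since $c$ must be taken large, integrating this against a set of measure $O(\log n/n)$ does not vanish. This is precisely why the paper does \emph{not} attempt to push the polynomial estimate past $1-c\tfrac{\log n}{n}$, and instead constructs a different crude bound of the form $\prod_{nx\le k\le ny}(1+c'\sqrt{\log\log n/(n-k)})$ valid on events $\Omega_n$ of probability tending to one. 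The key gain is the $\sqrt{\log\log n}$ scale: it yields $\|A_n(x)^{-1}A_n(y)\|^2\le e^{\sqrt{\kappa_n}(\sqrt{1-x}-\sqrt{1-y})}$ with $\kappa_n\sim n\log\log n$, which is only $n^{o(1)}$ over the slab. Achieving this sharper bound with $\pr(\Omega_n)\to1$ forces the paper's three-tier event construction, with fluctuation scales $\sqrt{\log\log n}$, $\sqrt{\log\log\log n}$, and $O(1)$ in the ranges $n-k\in[16\log\log n,\,c\log n]$, $[\gamma,\,16\log\log n]$, and $[1,\gamma]$ respectively ($\gamma\to\infty$ slowly). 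Your idea of isolating the last $O(1)$ blocks is present in spirit in the paper's $\Omega''(k)$ step, but the dangerous range is really the intermediate one $n-rk\in[\text{poly}(\log\log n),\,c\log n]$, and that is where the $\log\log n$ refinement cannot be dispensed with.

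Two smaller points: the bound you write for $|v^\dagger Z_k^{}Z_k^\dagger v-1|$ is $\sqrt{\log n/(n-rk)}$, but from Gaussians bounded by $c'\log n$ one gets only $\log n/\sqrt{n-rk}$ (a further factor $\sqrt{\log n}$ off); and the product of worst-case per-step factors does not of itself reproduce the polynomial form $((1-y)/(1-x))^\theta$ of Corollary~\ref{cor:DiscreteBound} --- that shape is the output of the martingale argument, not of a term-by-term majorization.
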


\begin{proof} As noted already, the scaled pre-factors $ \sqrt{ n(1-x) }{D}_{\lceil (n/r) x \rceil}^{-1}$ are bounded by a tight random constant multiple of the identity.  
We can then consider the slightly simplified kernel 
$$  \tilde{k}_{n,r} (x,y) =  \frac{1}{\sqrt{1-x}} \prod_{k= [ nx/r ]}^{ [n y/r] } O_k D_{k+1}^{-1} =  
\frac{1}{\sqrt{1-x}} A_n(x)^{-1} A_n(y)
$$ 
in place of $k_{n,r}(x,y)$.  

The proof comes down to constructing events ${\Omega}_n$ with probability tending to one on which 
\begin{equation}
\label{AAbound1}
 \|  A_n(x)^{-1} A_n(y) \| \le  \prod_{n x \le  k \le ny}  ( 1 +    c' \sqrt{ \frac{\log \log n}{{n-k}}} ),  
\end{equation}
for the prescribed  range of $x,y$ and  $c'$ is a numerical constant, independent of $k$, $n$ or $c$. 
This is used in conjunction with Corollary \ref{cor:DiscreteBound} of the previous section which yields 
events $\Omega_n'$ (there referred to as $\Omega_n$)  of probability close to one on which
\begin{equation}
\label{AAbound2}
  \| A_n(x)^{-1} A_n(y) \| \le c'  \left( \frac{1-y}{1-x} \right)^{- \frac{1}{2} + \frac{a+1}{2 r} -\epsilon},  \ \mbox{ for } x, y \le  1 -  c  \frac{\log n}{n},
\end{equation}
$\epsilon < \frac{a+1}{2r}$ and now another deterministic constant $c'$. We are translating the tight random constant bound in that statement to a deterministic one on 
a sequence of nice events. The goal then is to show that on the disjoint union of $\Omega_n^{}$ and $\Omega_n'$ the appraisal
\eqref{taileq} holds (for again $\tilde{k}_{n,r}$). That $c$ be large enough is required exactly because we use \eqref{AAbound2} with $\pr ({ \Omega_n'}^c) \ll 1$.

So, with both \eqref{AAbound1} and 
\eqref{AAbound2} in place, write:
\begin{align*}
   \iint\limits_{ {0 \le x \le y \le 1},  \, { y \ge 1- \delta _n} }  \,  \| \tilde{k}_{n,r}(x,y) \|^2  dy dx = \int_{1-\delta_n}^1 \int_x^1  \| \tilde{k}_{n,r}(x,y) \|^2 dy dx
     +  \int_0^{1-\delta_n} \int_{1-\delta_n}^1  \| \tilde{k}_{n,r}(x,y) \|^2  dy dx.
\end{align*}
with $ \delta_n = c \frac{\log n}{n}$.

For the first term we have the estimate on the right hand side of \eqref{AAbound1},
$$
      ||  A_n(x)^{-1} A_n(y) ||^2 \le   e^{  \sqrt{ \kappa_n } ( \sqrt{1-x} - \sqrt{1-y})}, \quad
\mbox{ with } \kappa_n = c'' n \log \log n, 
$$
for large enough $n$ and constant $c''$.
This gives
\begin{align}
\label{AAstep1}
   \int_{1-\delta_n}^1 \int_x^1 \frac{1}{1-x}    \| A_n(x)^{-1} A_n(y) \|^2  dy dx & 
   \le    \int_0^{\delta_n} \frac{1}{x} \int_0^x e^{ \sqrt{\kappa_n  x} - \sqrt{ \kappa_n y} }  dy dx   \\
  &  = 2
   \int_0^{\delta_n}  \left (\frac{e^{ \sqrt{ \kappa_n x}} -1}{\kappa_n  x} - \frac{1}{\sqrt{ \kappa_n   x}} \right) dx 
   \le  \,  \frac{e^{\sqrt{\kappa_n\delta_n }}  }{\kappa_n}, \nonumber
 \end{align}
 which tends to zero like $1/n^{1-}$.  The final inequality rests  on the fact that $\frac{e^{\sqrt{x}}}{x}$ is increasing for large $x$, while $e^{\sqrt{x}} \le 1 + \sqrt{x} +    O(x)$ for moderate values of $x$.
 
 As to the second term we have: 
  \begin{align}
  \label{AAstep2}
 \int_0^{1-\delta_n} \int_{1-\delta_n}^1  \frac{1}{1-x} & \| A_n^{-1}(x) A_n(y) \|^2  dy dx  \\
                        & \le \int_0^{1-\delta_n}  \frac{1}{1-x} \| A_n^{-1}(x) A_{n} (1-\delta_n)  \|^2 \, dx  
                        \int_{1-\delta_n}^1  \| A_n^{-1}(1-\delta_n)  A_n(y) \|^2 \,dy \nonumber \\
                        & \le  2 c' \delta_n^{- {1} + \frac{a+1}{ r} - 2\epsilon}   \int_0^{1-\delta_n}  \frac{dx}{({1-x})^{ \frac{a+1}{ r} - 2\epsilon}} 
                                \times \frac{e^{ \sqrt{\kappa_n \delta_n}}}{\kappa_n}.  \nonumber
 \end{align}                        
Here we used \eqref{AAbound2} in the first factor, and  the same estimate just employed in \eqref{AAstep1} in the second factor.
Completing the remaining integral in gives an upper bound of order
$ \delta_n^{\frac{a+1}{r} - 2 \epsilon}  \,  \frac{e^{  \sqrt{   \kappa_n \delta_n }}  }{\kappa_n \delta_n}$
which  decays like a small negative power of $n$ as long as the $L^2$ condition $\epsilon < \frac{a+1}{2r}$ holds.

It remains  go back and construct the events $\Omega_n$. 
Along with the standard Gaussian tail inequality in the form $\pr \left(  |g| \ge \gamma \right)  \le  \frac{e^{-\gamma^2/2}}{\gamma}$ we will also need that, for any $\chi_t$ random variable with $t \ge 1$,
 \begin{equation}
\label{chiBound}
    \pr \left(   | \chi_t  - \EE \chi_t |  \ge \delta \ev \chi_t \right)  \le 2 e^{- \delta^2 t/2}.   
\end{equation} 
This follows readily from the inequality \eqref{chi_mgf} along with the bounds
\begin{equation}
\label{chiBound0}
    \sqrt{t -\frac{1}{2}} \le \ev \chi_t \le  \sqrt{t}. 
\end{equation} 
The lower estimate requiring $t \ge 1$, the upper estimate is Jensen's inequality. Clearly \eqref{chiBound} is most useful for $t \uparrow \infty$.  For $t = O(1)$ we get by with 
\begin{equation}
\label{chiBound3}
 \pr \left( \chi_t \le \delta \right) \le  \delta^t,
\end{equation}
for small $\delta > 0$ and $t \ge 1$ (just integrate the density).

Now, dropping indices for a moment, we write $O_k = d + G$ and $D_k = \tilde{d} + G$ where $d$ and $\tilde{d}$ are diagonal matrices of the independent $\chi$ variables:
$$
{d}_{i} =  \frac{1}{\sqrt{\beta}} \chi_{\beta (n - rk  -i+1)} ,  \quad \tilde{d}_i = \frac{1}{\sqrt{\beta}} \chi_{\beta (n+a - r(k-1)-i+1)},
$$
 leaving  
$G, \tilde{G}$ to denote the strictly upper/lower matrices of independent Gaussians ($g_i \in G$, $\tilde{g}_i \in \tilde{G}$). Denote by 
$\Gamma_k$ the index set of all $d_i, \tilde{d}_i, g_, \tilde{g}_i$ variables occuring in $O_k$ and $ D_{k+1}$.
Decomposing as in,
\begin{align}
\label{resolventterm}
   O_k D_{k+1}^{-1} - I 
        & = (d \tilde{d}^{-1}  - I) + {G}  \tilde{d}^{-1} -   ( I + {G} {d}^{-1} )  d \tilde{d}^{-1}
         (I + \tilde{G} \tilde{d}^{-1} )^{-1}    \tilde{G} \tilde{d}^{-1},
\end{align}
we see that on the event
$$
  \Omega(\delta, k) = \left\{ |g_i|, |\tilde{g}_i| \le 2 \sqrt{\log \log n};  \,  d_i \le \sqrt{n-k} (1+\delta);   \, d_i,  \tilde{d}_i  \ge \sqrt{n-k}(1-\delta) 
                                      \right\}_{i \in \Gamma_k},                                     
$$
with $\delta = \delta_k = 4 \frac{ \sqrt{{\log \log n}}  }{\sqrt{n-k}}$ assumed less than $\frac{1}{2}$, there are the bounds
$$ 
   \| d \tilde{d}^{-1}  - I \| \le   \frac{8 \sqrt{\log \log n} }{\sqrt{n-k}}, \mbox{ and }
   \| {G} {d}^{-1} \| ,  \| {G}  \tilde{d}^{-1} \|,  \|  \tilde{G} \tilde{d}^{-1} \| \le  \frac{4 \sqrt{\log \log n} }{\sqrt{n-k}}.
$$
That is to say, 
\begin{equation}
\label{CheapNorm1}
   \|   O_k D_{k+1}^{-1}  - I  \| \le   c' \frac{ \sqrt{{\log \log n}}  }{\sqrt{n-k}},   \quad  \mbox{on } \cap_k \Omega(\delta_k, k) \mbox{ over }
   n-k \in [ 16 \log \log n, c \log n],
\end{equation}
with the lower bound on $n-k$ providing a uniform estimate  on the appearance of $ \| (I -  \tilde{G} \tilde{d}^{-1})^{-1} \| 
\le (1- \| \tilde{G} \tilde{d}^{-1} \|)^{-1}$ 
in the last term of \eqref{resolventterm}. Then by the Gaussian tail bounds, recall in particular \eqref{chiBound} coupled with \eqref{chiBound0},
\begin{align}
\label{AEventbound}
   \pr \Bigl( \cup \, \Omega(\delta_k, k)^c, \,   k \in [ {n} - c {\log{n}}, & \,  {n} - 16 \log \log n ] \Bigr)   
    \\ &  \le c' \log n  \left(   \frac{e^{-2 \log \log n }}{\sqrt{\log \log n}} + e^{- 8 \log \log n} \right), \nonumber
\end{align}
with a constant $c' = c'(r, c)$ on the right hand side.

Next, for $n-k$ of the same order as $\log \log n$, $O_k D_k^{-1}$ no longer concentrates well about the identity. Instead, we use that on $n-k \le 
16 \log \log n$, the typical Gaussian is now $O(\sqrt{\log \log \log n} )$ and on the event
$$
    \Omega'( k) =  \left\{ |g_i|, |\tilde{g}_i| \le 2 \sqrt{\log \log \log n};  d_i \le 2 \sqrt{n-k} , \tilde{d}_i \ge \frac{1}{4} \sqrt{n-k}  \right\}_{i \in \Gamma_k} 
$$
we can overestimate
\begin{align}
\label{roughboundCD}
   \|  O_k D_{k+1}^{-1} \|  & \le ( \| d \tilde{d}^{-1} \| +  \|  G \tilde{d}^{-1} \|  )  \, \| (I + \tilde{G} \tilde{d}^{-1} )^{-1} \| \\
                                        & \le c' \frac{\sqrt{\log \log \log n}}{\sqrt{n-k}} \times  \left( \frac{\log \log \log n}{n-k} \right)^{r/2}   
                                        \le c' \frac{\sqrt{\log \log n}}{\sqrt{n-k}}. \nonumber
\end{align}
The second inequality is just an $\infty$-norm bound on $(I + \tilde{G} \tilde{d}^{-1} )^{-1} $, the entries of which are polynomial (of degree at most $r$) in the entries of $  \tilde{G} \tilde{d}^{-1} $. The last inequality holds for $n$ big enough by adjusting the constant $c'$. And similar to 
\eqref{AEventbound},
\begin{align}
\label{AEventbound2}
   \pr \Bigl( \cup \, \Omega'( k)^c, \,   k \in  & [ {n} - 16 \, {\log \log{n}},   \,  {n} - \gamma ] \Bigr)   \nonumber \\
     &  \le c'' \frac{ \log \log  n }{\sqrt{\log \log \log  n}}   e^{-2 \log \log  \log n}   
      +  c'' \sum_{k =  \gamma}^{{16 \log \log n}}  e^{-\frac{1}{4} k },
\end{align}
which goes to zero for any $\gamma = \gamma_n \rightarrow \infty$. Note here we are we are using the bound \eqref{chiBound} with $\delta=1/2$  
(that is, 
$\Omega'(k) = \Omega'(\delta =1/2, k)$), along with the fact
$\EE \chi_t > \frac{1}{2} t$ for sufficiently large $t$, see again \eqref{chiBound0}. This gets us down to $n-k =O(1)$.

For the final stretch, we note that we have the same type of bound on $O_k D_{k+1}^{-1}$ as in  \eqref{roughboundCD} on an event $\Omega''(k)$ defined similar to $\Omega'(k)$ except replacing the conditions on $d_i, \tilde{d}_i$ with
$ \frac{d_i}{\sqrt{n-k}} \le  \sqrt{2 \log \log \log n}$ and $\frac{\tilde{d}_i}{\sqrt{n-k}} \le  1/\sqrt{ \log \log \log n}$. With $k$ now ranging from $n-\gamma$ to $n$, the adjustments  needed in what corresponds to the second term of \eqref{AEventbound2} are as follows.  The probability of the upper deviations of the maximum $d_i$ is bounded in the same manner as before by $ O ( \gamma e^{-\log \log \log n}) = O (\frac{\gamma}{\log \log n})$. For the probability of the lower deviations of the $\tilde{d}_i$ we use \eqref{chiBound3} to control this by $O( (\log \log \log n)^{-1/2} \sum_{k=1}^{\gamma} k^{k/2})$.  Since $\gamma \rightarrow \infty$ as slowly as we want, both of these bounds tend to zero as $n \rightarrow \infty$.

The advertised event $\Omega_n$ can then be defined as the intersection(s) of $\Omega(\delta_k, k)$, 
$\Omega'( k)$ and $\Omega''(k)$ over their respective ranges.
\end{proof}

\section{Riccati correspondence}
\label{section:Riccati}

We prove Theorems \ref{r1Riccati} and \ref{thm:MultiRiccati}, providing hitting time and PDE descriptions of the spiked hard edge laws, as well as Corollary \ref{cor:Large_a} which highlights the dependence of these descriptions on the ``non-degeneracy" parameter. 

\begin{proof}[Proof (sketch) of Theorem \ref{r1Riccati}]
The argument to identify the $q$ diffusion, with the appropriate starting point, is can be taken verbatim from Theorem 2 of \cite{RR} (with considerations for the boundary condition ``at infinity" fixed by the subsequent erratum).

To see the PDE connection is to note the (hyper-elliptic) differential operator 
$$
\mathcal{L}  := 
    \frac{\partial }{\partial \mu}   +  \frac{2}{\beta} c^2     \frac{\partial^2  }{\partial c^2}   + ( (a +\frac{2}{\beta}) c - c^2   - e^{-\mu}) 
      \frac{\partial }{\partial c},
$$ introduced in \eqref{r1PDE} is simply the space-time generator for the joint process $(x, q_x)$. Uniqueness follows from the standard martingale argument, starting from the observation that a solution to the backward equation $ \mathcal{L} G = 0$ produces 
a local martingale, $x \mapsto G(x, q_x)$. So then, with $\tau$ the passage time to zero, $G(\mu, c) = \EE_{(\mu,c)} G( \tau \wedge t, q_{\tau \wedge t})$. If $G$ is also bounded, the  $t \rightarrow \infty$ limit can be taken under the expectation. The resulting equality, along with the boundary conditions \eqref{BCs}, force $G$ to be the probability that the path never hits zero, that is, $G=F$. 

For the higher order distributions things are worked out inductively. Consider $F_1$, and let $\kappa_{\mu, c}(d\eta)$ denote the (improper) distribution of the
passage time to $-\infty$ starting from time/place $(\mu, c)$, allowing for the expression
$$
  F_1(\mu, c) = F(\mu, c) + \int_{-\infty}^{\infty} \kappa_{\mu,c}(d \eta) F(\eta, +\infty).
$$
From the same basic theory just employed, $\kappa$, which vanishes to the left of $\mu$, also satisfies the backward equation $\mathcal{L} \kappa =0$. The claimed boundary condition follows by observing that, as $c \downarrow -\infty$, $\kappa_{\mu,c}(d\eta)$ tends to the unit mass at $\eta =\mu$ and $F(\eta, +\infty)$ is continuous in $\eta$.
\end{proof}

\begin{proof}[Proof of Theorem \ref{thm:MultiRiccati}]
The approach mimics that for the one-dimensional case.
Introduce the truncated operator $\mathfrak{G}_L$ on $L^2[ \mathcal{M}, [0, L]]$ by
$$
  (\mathfrak{G}_L) f(x) = \int_0^L \int_0^{x \wedge y} \S(dz)   \M(dy)  f(y)+ C^{-1} \int_0^L  \M(dy)  f(y),
$$ 
for  $f: [0,L] \mapsto \mathbb{F}^r$. Since $\S_x$ and $\M_x$ are continuous and norm bounded  over $[0,L]$ any  
$f \in L^2 [\mathcal{M}, [0,L]] $ which also satisfies the eigenvalue problem $f(x) = \lambda \mathfrak{G}_L f(x)$
is readily seen to be $C^{1}$ and must also satisfy:
\begin{equation}
\label{vectorBCs}
     f'(0) = C f(0),  \quad  f'(x) =  \lambda  \mathcal{S}_x \int_x^{L} \mathcal{M}(dy)  f(y).
\end{equation}
This last condition holds entry-wise, so in particular $f'(L)$ is the zero vector. The important point is that the same estimates on the Hilbert-Schmidt norm of $k_r$ (Lemma
 \ref{lem:ContinuumHSBound})  shows that $\mathfrak{G}_L \rightarrow \mathfrak{G}$ almost surely in operator norm.  In particular, the desired eigenvalue counting function is the $L \rightarrow \infty$ limit of that for $\mathfrak{G}_L$.

The next step is to cast the condition that any fixed $\lambda$ is an eigenvalue (of $\mathfrak{G}_L$) in terms of an initial value problem, 
Since the paths of $\MM_x$ are (almost surely) bounded over $[0,L]$, any   $f \in L^2[\M]$ is also in $L^1[\M]$. And if such an $f$ satisfies the eigenvalue problem, it will then be continuous and, by repeating the argument, entry-wise differentiable. Taking derivatives we find
that
\begin{equation}  
\label{div1}
f'(x) =  \lambda  \mathcal{S}_x \int_x^{L} \mathcal{M}(dy)  f(y). 
\end{equation}
Following this by taking an It\^{o} differential,  we have the stochastic differential equation,
\begin{equation}
\label{multiRic1}
  df'(x)   =  \sqrt{2}  \A_x^{-\dagger} d \mathcal{B}_x  \A_x^{\dagger} \,  f'(x)  +   \left( (a + r-1 + \frac{2}{\beta} ) f'(x)   - \lambda e^{-rx} f(x) \right) dx,
\end{equation}
which can be closed by including $df(x) = f'(x)dx$ along with the equation for $A_x$. Note that $A_x$ is almost surely invertible for all $x \ge 0$.  Here $   \mathcal{B}_x = \frac{1}{\sqrt{2}}  ( B_x + B_x^{\dagger} )$ is the canonical $\beta=1,2,4$ invariant (or ``Dyson") Brownian motion, and we used the equation for  $\S_x$:
$$
\label{Seq}
  d \S_x =  \sqrt{2} 
  \A_x^{-\dagger} d \mathcal{B}_x \A_x^{-1}  +  (a +  r-1 + \frac{2}{\beta} ) \S_x dx.
  $$
  The latter follows from  It\^o's rule in the form
  $
  d M_x^{-1} = -  M_x^{-1} dM_x M_x^{-1}  +  M_x^{-1} dM_x M_x^{-1} dM_x M_x^{-1}.
$

The ``twisting" of the Brownian increment by $A_x^{\dagger}$ in \eqref{multiRic1} is a complication particular to $r>1$. Even without this, the fact that the noise appears multiplicatively seems to preclude reducing things directly to the classical (deterministic) matrix oscillation theory as was done for the multi-spiked soft edge in \cite{BV2}.  
On the other hand, by the Markov property and (path-wise) uniqueness for the system  $x \mapsto (A_x, f(x), f'(x))$, it cannot happen that
both $f$ and $f'$ vanish simultaneously. Further, by \eqref{div1} we see that $f'(L)= 0$ is precisely the condition for the predetermined $\lambda$ to be an eigenvalue of $\mathfrak{G}_L$. 

To make computations it is convenient to lift things to a matrix system. Define $F(x) \in \mathbb{F}^{r \times r}$ as the (unique) solution to
\begin{align}
\label{multiRic2}
  dF'(x)   &=  \sqrt{2}  \A_x^{-\dagger} d \mathcal{B}_x  \A_x^{\dagger} \,  F'(x)  +   \left( (a + r-1 + \frac{2}{\beta} ) F'(x)   - \lambda e^{-rx} F(x) \right) dx, \\
   d F(x) & = F'(x) dx, \nonumber
\end{align}
with initial conditions
 $$
 F(0) = I, \quad  F'(0) = C.
$$ 
  Evolving this system forward (including the evolution of $A_x$) for fixed $\lambda$, the outcome is that
$$ 
   \{ \lambda  \mbox{ is an eigenvalue of } \mathfrak{G}_L \} = \{ F'(L) \mbox{ is singular} \}. 
$$
Note that the uniqueness argument used for $(f, f')$ shows that $F$ and $F'$ cannot be singular at any common time.

The standard Riccati procedure would be to now introduce  $P(x) = F'(x) F^{-1}(x)$, which, away from the singular points of $F(x)$, solves
\begin{equation}
\label{matrixriccati}
   dP_x = \sqrt{2}  \A_x^{-\dagger} d \mathcal{B}_x  \A_x^{\dagger} P_x  +  (r-1) P_x  dx + \psi(P_x)  dx, \quad P_0 = C.
\end{equation}
Here the definition of
$$
\psi(P) = (a + \frac{2}{\beta} )P - P^2 - \lambda e^{-rx} I_r,
$$
from the statement of the theorem is simply lifted to act on matrices. While \eqref{matrixriccati} recovers the $r=1$ hard edge process (as it must), for $r > 1$  one must (again) include the auxiliary process $\A_x$ to close the equation.  Additionally, the hope would be to reduce the dimension of the system by translating the singular point count  of $F'(x)$ over $[0,L]$ to an eigenvalue count  of $P_x$.  But $P_x$ is not even say  symmetric in law.

The program is salvaged by introducing 
$
Q_x =  \A_x^{\dagger} P_x \A_x^{-\dagger},
$
which satisfies
\begin{equation}
\label{matrixriccati2}
  d Q_x =    dB_x Q_x  +  Q_x dB_x^{\dagger}  +  \left(   \mathrm{tr}(Q_x) I_r - \frac{1}{\beta} Q_x + (\frac{1}{\beta} -1) 
  \mathrm{diag}(Q_x)   + \psi(Q_x) \right) dx,  
\end{equation}
with $Q_0 = P_0 = C$,
and $\mathrm{diag}(Q_x)$ denoting the diagonal matrix with $ii$-entry $(Q_x)_{ii}$. We will hold off on the derivation of \eqref{matrixriccati2}. One can immediately deduce that $Q_x \sim Q_x^{\dagger}$ and hence $Q_x$ has $r$ real and almost surely distinct eigenvalues $q_1, \dots, q_r$. So, at least $F'(x_0)$ being singular for $x_0 \in [0,L]$ is equivalent to some $q_i(x_0) = 0$. Furthermore, while \eqref{matrixriccati2} is not transparently  invariant under ``rotations" (except in the case $\beta=1$), the eigenvalues form their own Markov process governed by
\begin{equation}
\label{eigdiffusion}
   d q_i = \stb q_i db_i +  \left( \psi(q_i) +  q_i  \sum_{k \neq i}  \frac{ q_i + q_k}{ q_i - q_k }  \right) dx, 
\end{equation}
with $b_1, \dots, b_r$ (new) independent standard Brownian motions. 

The derivation of \eqref{eigdiffusion} is also put off to later, but taking it for granted the proof is finished as follows. A standard calculation shows the interaction term $  q_i  \sum_{k \neq i}  \frac{ q_i + q_k}{ q_i - q_k }$ precludes intersections among the $q_i$'s, see for example \cite[Proposition 4.3.5]{AGZ}.\footnote{This reference treats the standard Dyson Brownian motion with interaction term $\sum_{k \neq i}\frac{2}{q_i-q_k}$, but the considerations are basically identical.}  So, choosing by convention  $q_i(0) = c_i$ with 
$c_1 \ge c_2 \ge  etc.$ the eigenvalues of $C$, the $q_i$ remain ordered until the possible explosion of $q_r$ to $-\infty$. Next, the presence of the term $-\lambda e^{-rx}$ in each drift term (recall the definition of $\psi$) shows that  $q_i$ are path-wise decreasing in $\lambda$. This means that new zeros among any of $q_i$ on $[0,L]$ occur at the endpoint $L$ and move from right to left. The same then holds for for the singular points of $F'$, and so the eigenvalues of $\mathfrak{G}_L$. (Thus, after the fact we see that $F$ cannot become singular before the first singular point of $F'$.)

This establishes the correspondence between $F_k$ and the probability of the appropriate number of vanishings 
 among the family $\mathbf{q}_x = \{ q_i \}_{i=1, \dots, r}$, the substitution $\lambda = e^{-\mu}$ having the same effect as in dimension one of shifting the starting time from $x=0$ to $x=\mu$. The rest is also much the same as in dimension one: the partial differential operator appearing in \eqref{rrPDE} is the space-time generator of $(x, \mathbf{q}_x)$, uniqueness is proved by an identical martingale argument, and so on.

Finally we return to  \eqref{matrixriccati2} and \eqref{eigdiffusion}.  For the former, It\^o's formula gives
\begin{align*}
    (d \A_x^{\dagger} ) P_x \A_x^{-\dagger}  & =   - dB_x^{\dagger} Q_x + (- \frac{a}{2} + \frac{1}{2 \beta} ) Q_x dx,  \\
    \A_x^{\dagger}  ( d P_x  ) \A_x^{-\dagger} & =  (dB_x + dB_x^{\dagger} ) Q_x + (r-1+ L(Q_x) ) dx,  \\
    \A_x^{\dagger} P_x (d \A_x^{-\dagger}) & =   Q_x dB_x^{\dagger} + (\frac{a}{2}+ \frac{1}{2 \beta} ) Q_x dx,
\end{align*}
and, at second order,
\begin{align*}
  (d \A_x^{\dagger}  dP_x )  \A_x^{-\dagger}  & =   - dB_x^{\dagger}   (dB_x + dB_x^{\dagger} )   Q_x =  - (r-1 +\frac{2}{\beta}) Q_x  dx, \\
    \A_x^{\dagger}  ( d P_x   d \A_x^{-\dagger} ) + (d  \A_x^{\dagger} ) P_x (d \A_x^{-\dagger}) & =  dB_x Q_x  dB_x^{\dagger}
    = \mathrm{tr} (Q_x) I_r + (\frac{1}{\beta}-1) \mathrm{diag}(Q_x),
\end{align*}
Summing the right hand sides produces \eqref{matrixriccati2}.  

For \eqref{eigdiffusion} we start by introducing some more notation, rewriting \eqref{matrixriccati2} as
$$
   d Q_x =  d B_x Q + Q dB_x^{\dagger} + F(Q_x) dx,
\quad
\mbox{ with } F(Q) =   \psi(Q) + \tr(Q) - \frac{1}{\beta} Q + ( \frac{1}{\beta} -1) \mbox{diag}(Q).
$$
The approach is classical, taking differentials throughout the spectral decomposition $Q_x = U_x^{\dagger} \mathbf{q}_x U_x$ (using now $\mathbf{q}$ for the diagonal matrix of eigenvalues $q_1, q_2 \dots$). 
To be concrete, we carry things out for $\beta=2$.\footnote{This computation is more or less reproduced in \cite{RV}.
 It should be clear afterwards how everything goes through in the quaternion case (in the real case there are no subtleties $-$ the $\mbox{diag}(Q_x)$ term is not present and the Brownian motion is isotropic).}

Set
$$
d\tilde{B}_x = U_x^{} d B_x^{} U^{\dagger}_x, \quad \mbox{ and }  \quad d U_x U_x^{\dagger} = dN_x + dG_x.
$$
Note that $\tilde{B}_x$ is not simply a copy of $dB$ (as it is for $\beta = 1$). The second definition is
a Doob-Meyer decomposition, with $N_x$ a local martingale and $G_x$ of  finite variation. 
Further,
\begin{equation}
\label{NGRules}
   d N_x^{\dagger} = - d N_x, \quad \mbox{ and } \quad dG_x^{} = - \frac{1}{2} dN_x^{} dN_x^{\dagger}, 
\end{equation}
since $d (U^{\dagger} U) = 0$.
With this in hand, suppressing the time index, an application of It\^{o}'s formula in $d \mathbf{q} = d (U Q U^{\dagger})$ produces 
\begin{align}
\label{EigSystem}
 d \mathbf{q} =  & \, d \tilde{B} \mathbf{q}    +   \mathbf{q} d \tilde{B}^{\dagger} +  U F(U^{\dagger} \mathbf{q} U )  U^{\dagger} + (d N \mathbf{q} + \mathbf{q} d N^\dagger)
     +  (d G \mathbf{q} + \mathbf{q}  d G^\dagger) \\
       & \,   + dN  \mathbf{q} d N^{\dagger} + dN d \tilde{B} \mathbf{q} + dN \mathbf{q}  d \tilde{B}^{\dagger} + d\tilde{B} \mathbf{q} d N^{\dagger} 
             + \mathbf{q} d \tilde{B}^{\dagger} d N^{\dagger}.  \nonumber
\end{align}
And as  martingale component of the right hand side must vanish off the diagonal it follows that
\begin{equation}
\label{TheN}
    dN_{ik} = \frac{  q_k  d  \tilde{B}_{ik}  + q_i d \tilde{B}^{\dagger}_{ik}}{q_i - q_k}
                   =  \frac{  q_k  d  \tilde{B}_{ik}  + q_i  d \overline{ \tilde{B}}_{ki}}{q_i - q_k}, 
\end{equation}      
for $i \neq k$.

Next, write out \eqref{EigSystem} on the diagonal  with the help of
\eqref{NGRules}:
\begin{align}
\label{EigSystemA}
    d q_i   =  &   \,    q_i ( d  \tilde{B}_{ii} +  d \tilde{B}_{ii}^{\dagger} )  +  F_{ii}(\mathbf{q}, U)  \\
                                  & \, + \sum_{k \neq i} ( q_k - q_i ) d N_{ik} d \overline{ N}_{ik}  \nonumber \\
                                    &  + \sum_{k \neq i} ( q_i d  \tilde{B}_{ki} + q_k d \overline{  \tilde{B}}_{ik} ) dN_{ik}
                                     \,  + \sum_{k \neq i} ( q_i d \overline{  \tilde{B}}_{ki} + q_k  {d  \tilde{B}_{ik}} ) 
                                     d \overline{ N}_{ik}. \nonumber
\end{align}
Here
\begin{align}
\label{diagdrift}
  F_{ii}(\mathbf{q}, U) & =  \psi(q_i) +  \sum_{k \neq i} q_k + \frac{1}{2} q_i - \frac{1}{2} 
  \sum_{k = 1}^{r}  
  \sum_{\ell=1}^r  q_k
     | u_{i \ell} |^2 | u_{k \ell}|^2, \\
                                     & = \psi(q_i) +  \sum_{k \neq i} q_k + \frac{1}{2}  \sum_{k \neq i} (q_i - q_k)  \sum_{\ell=1}^r  
     | u_{i \ell} |^2 | u_{k \ell}|^2, \nonumber
\end{align}
and $u_{ab}$  are entires of $U$.
 
 We readily have that
$
        \tilde{B}_{ii} +   \tilde{B}_{ii}^{\dagger} $ is equal in law to
$\sqrt{2}$ times a standard Brownian motion.   The higher order multiplication table is more complicated:   
$$
   d \tilde{B}_{ik}  d \overline{ \tilde{B}}_{ik} =   d \tilde{B}_{ki} d  \overline{ \tilde{B}}_{ki} = 1 -\frac{1}{2} \sum_{\ell=1}^r 
     | u_{i \ell} |^2 | u_{k \ell}|^2,     
$$
and 
$$
    d \tilde{B}_{ik}  d { \tilde{B}_{ki}} =   d\overline{ \tilde{B}}_{ik} d  \overline{ \tilde{B}}_{ki} = \frac{1}{2} \sum_{\ell=1}^r 
     | u_{i \ell} |^2 | u_{k \ell}|^2.
$$
Invoking these facts along with \eqref{TheN}  in \eqref{EigSystemA}  shows the final two lines 
of that equation are equal to
\begin{align*}
   \sum_{k \neq i} (q_i - q_k)^{-1}  &  \Bigl( q_i^2 | d \tilde{B}_{ki}|^2 + q_k^2 | d \tilde{B}_{ik}^2|^2
                                   + q_i q_k (  d \tilde{B}_{ik} {d \tilde{B}_{ki}} +    d \overline{ \tilde{B}}_{ik}  d \overline{ \tilde{B}}_{ki} ) \Bigr) \\
                                  =  &  \sum_{k \neq i}  \frac{ q_i^2  + q_k^2}{q_i - q_k} - \frac{1}{2} 
                                   \sum_{k \neq i} (q_i - q_k)  \sum_{\ell=1}^r 
     | u_{i \ell} |^2 | u_{k \ell}|^2.
\end{align*} 
When added to \eqref{diagdrift}, the second term on the right hand side above cancels the non-invariant drift term in that expression. The derivation is completed by noticing that the first term here and second term in \eqref{diagdrift} combine to produce $ \sum_{k \neq i} 
(  q_k  + \frac{ q_i^2  + q_k^2}{q_i - q_k} ) = q_i \sum_{k \neq i} 
\frac{q_i + q_k}{q_i - q_k}$.
\end{proof}

\begin{proof}[Proof of Corollary \ref{cor:Large_a}] The point is to show that for either of the processes \eqref{r1q} or \eqref{rrq}, that, given a passage to zero has occurred, explosion to $-\infty$ occurs in finite time with probability one.

For $r=1$, or the \eqref{r1q} process, this was already noticed and proved in \cite{RRZ}.  To summarize the main idea, first note that if $q_x$ hits zero it immediately becomes and stays negative for all time. Then, on the event $\tau_0 < \infty$ (with $\tau_0$ that hitting time)  the change of variables $u_x = \log (-q_{x+\tau_0})$ yields a process on the whole line, satisfying
$$
   d u_x = \frac{2}{\sqrt{\beta}} db_x + [ a + e^{u_x} + e^{-\tau_0-x - u_x}] dx, \quad r_0 \in (-\infty, \infty). 
$$
And given the sign change, one wants to show that $u_x$ hits $+\infty$ in finite time. But $u_x$ is path-wise lower bounded by the the homogeneous process $v_x$ defined by $  d v_x = \frac{2}{\sqrt{\beta}} db_x + [ a + e^{v_x}] dx$ (run on the same Brownian motion). 
Feller's test can now be applied to show $ \tau_{+\infty}(s) < \infty$ with probability one as long as $a \ge 0$.

For $r=1$ the condition on $a$ can be shown to be sharp, and we expect that for all $r$ the $a\ge 0$ is the right condition. In any event, consider \eqref{rrq} and notice again that if the lowest point $q_{r}$ vanishes at some $x' < \infty$, then $q_{r,x} < 0$ for all $x > x'$. With any other the other particles $q_i$, $i = 1, \dots, r-1$, possibly still positive, the corresponding contribution to the drift $q_r \times  \frac{q_r+ q_i}{q_r -q_i}$ may also be positive, and so push $q_r$ in the ``wrong" direction. On the other hand, in that case one has that $\left|   \frac{q_r+ q_i}{q_r -q_i} \right| \le 1$. This means that, at the very least, after $q_r$  hits zero it is bounded above by the process
$$
  d z_x = \stb z_x db +  \left(  (a  - (r-1) + \frac{2}{\beta}) z_x - z_x^2 - e^{-r x} \right) dx,
$$ 
ignoring shifts in the time-like variable. The proof is finished by noting that the $r=1$ argument can be applied to $z_x$ with $a$  replaced by $a -r +1$.
\end{proof}


\section{Hard-to-soft transition}
\label{sec:HardSoft}

By following the law(s) on paths induced by \eqref{r1q} and \eqref{rrq} in the $a \rightarrow \infty$ limit we show that (resealed) hard edge spiked laws degenerate to the soft edge spiked laws (Theorem \ref{thm:HardSoft}).

\begin{proof}[Proof of Theorem \ref{thm:HardSoft}] Since we are taking an $a \rightarrow \infty$ limit, Corollary \ref{cor:Large_a} shows that from the start 
we can just track blow-ups (to $-\infty$) for the hard-edge processes.

Taking first $r=1$, introduce the time-space scaling,
\begin{equation}
\label{pathsub}
    \eta(x) =  a^{-2/3} {q}( a^{-2/3} x) - a^{1/3},  
\end{equation}
where $q$ is defined by \eqref{r1q} after taking $a$ into $2a$: $dq = \stb q db + ((2a+\frac{2}{\beta}) q - q^2 - e^{-x}) dx$. With the hard-edge spectral parameter scaled as in $a^2 + a^{4/3} \lambda$, the process $\eta$ is then started at time $x_0 = - a^{2/3} \log (1- a^{-2/3} \lambda)$ from $\eta_0 = \eta(x_0) = a^{-2/3}(c-a)$.  For this recall that the start time $\mu$ for $q$ is related to the spectral parameter via $\lambda = e^{-\mu}$.

Now the essential observation is the same as in the corresponding proof from \cite{RR} for the limit of $\Lambda(\beta, 2a  , \infty)$:
$q$ hits $-\infty$ in finite time if and only if $\eta$ does. Hence, things come down to showing that we have the functional convergence of
$\eta = \eta(x, a)$ governed by
\begin{align}
\label{scaledr1diffusion}
   d \eta_x   = & \, \stb (1 + a^{-1/3} \eta_x) db_x +  [ a^{2/3} (1- e^{-a^{-2/3} x}) - \eta_x^2 +\tb (a^{-1/3} + a^{-2/3} \eta_x) ] dx \\
                  := & \, e(x, \eta,a) db_x + f(x, \eta, a) dx,  \nonumber                        
\end{align}
with  initial state, 
$$
   (x_0(a), \eta_0(a)) =  (- a^{2/3} \log (1- a^{-2/3} \lambda), \, a^{-2/3}c(a) -a^{1/3}), 
$$
and a new Brownian motion $b_x$,
to the one-spike soft edge process \eqref{softdiffusion1}:
\begin{align}
   d p_x  = & \,  \stb db   + (x- p_x^2) dx, 
\end{align}
with initial state $(\lambda, w)$.
But now,
$$
   e(x, \eta,a) \rightarrow 1, \quad f(x,\eta,a) \rightarrow x - \eta^2,
$$
uniformly on compact sets in $(x, \eta)$, while $x_0  = - a^{2/3} \log (1- a^{-2/3} \lambda) \rightarrow \lambda$ and 
$\eta_0 =  a^{-2/3}c - a^{1/3} \rightarrow w$ by  assumption. 
This is sufficient to imply the convergence of the solutions of the martingale problems ($\PP_{x_0, \eta_0}^{\eta} \Rightarrow 
\PP_{\lambda, w}^p$), see for example Theorem 11.3.3 of \cite{SV}.\footnote{Here we are tacitly assuming that the limiting starting point 
$w$ is finite, if $a^{-2/3}c - a^{1/3} \rightarrow \infty$ an additional approximation step is needed, but this has been carried out in \cite{RR}.}
In this setting, this is the same as convergence in law on paths in the uniform-on-compacts  topology.

From here the proof that $a^{2/3}- a^{-4/3} \Lambda_0(\beta, 2a ,c(a)) \Rightarrow  \lambda_0(\beta, w)$ will follow  from the fact that
$\tau_{-\infty} (\eta_a) \Rightarrow \tau_{-\infty}(p)$, where $\tau_{b}$ denotes the passage time to $b$.
 Again, this is what was done in \cite{RR} in the case that  $c(a) = w = +\infty$. If $b$ were finite the convergence in law of the corresponding passage times is a consequence of the continuous mapping theorem, and so it is enough to show that
 \begin{equation}
 \label{ApproxExplosionTime}
    \lim_{b \rightarrow -\infty} \PP_{\lambda, w} ( \tau_{-\infty}(p) - \tau_b(p)  > \epsilon)  \vee \sup_{a \gg 1} \PP_{x_0, \eta_0}
     ( \tau_{-\infty}(\eta) - \tau_{b}(\eta) | > \epsilon) = 0
 \end{equation}
 for any $\epsilon > 0$. Since this approximation step was swept under the carpet in \cite{RR} we take the opportunity here to provide some details.  
 
 Consider the question for the $p$-diffusion (the same argument will work for $\eta$, but is just more cumbersome as one needs to keep track of the parameter $a \rightarrow \infty$). Make a further approximation and truncate the stopping times, replacing $\tau_b(p)$ with 
 $\tau_b^k(p) = \tau_b(p) \wedge k$, noting that $\tau_b(\eta) \Rightarrow \tau_b(p)$ if and only if $\tau_b^k(\eta) \Rightarrow \tau_b^k(p)$. 
 Also introduce the time $\gamma_b^k(p) = \inf\{ x > \tau_b^k(p) : p(x) > - \frac{4}{\beta} k \}$.  Now take $f$ to be a smooth function
 with $f(x) = 1/x$ for $x < -10$ (say). It\^{o}'s lemma along with the optional stopping theorem gives: assuming $k > 10 \vee 4/\beta$ and
 dropping the dependence of the various random times on the path,
 \begin{align*}
   \ev  f (p(\tau_{-\infty}^k \wedge \gamma_b^k) ) & = \ev  f (p(\tau_b^k) ) + \ev  \int_{\tau_b^k}^{ \tau_{-\infty}^k \wedge \gamma_b^k} 
   ( \frac{4}{\beta} p^{-3}  - x p^{-2} + 1) dx \\
     & \ge \ev  f (p(\tau_b^k) ) + \frac{k}{k- 4/\beta} \ev [  \tau_{-\infty}^k \wedge \gamma_b^k - \tau_b^k ] .
 \end{align*}
 Hence, since either $p(\tau_b^k) = b$ or $\tau_b^k = \tau_b^{\infty}$,
 $$
    \pr \left( \tau_{-\infty}^k \wedge \gamma_b^k - \tau_b^k > \epsilon \right) \le - \frac{ k}{ b \epsilon (k - 4/\beta)} \pr \left( p(\tau_b^k) = b \right).
 $$ 
 And since $\lim_{b \rightarrow - \infty} \pr ( \gamma_b^k < k, p(\tau_b^k) = b) =0$, one has that
 $$
   \lim_{b \rightarrow -\infty}  \pr  \left( \tau_{-\infty}^k - \tau_b^k > \epsilon \right) = \lim_{b \rightarrow -\infty} \pr  \left( \tau_{-\infty}^k \wedge \gamma_b^k - \tau_b^k > \epsilon \right) = 0,
 $$
 which proves \eqref{ApproxExplosionTime} (for the $p$ process).

Moving to the joint convergence  of $\Lambda_{0}, \Lambda_1,...$, we have to show that, for any fixed $k$ and $a \rightarrow \infty$
$$
   \PP_{x_0(a), \eta_0(a)} \left( x \mapsto \eta(x,a) \mbox{ explodes exactly } k  \mbox{ times}  \right) 
$$  
goes over into the  $\PP_{\lambda, w}$-probability of the same event for $p$. Recalling the shorthand
$\kappa_{x, b} (dy)= \PP_{x,b} (\tau_{\infty} \in dy)$ for the passage time distribution of either process, the 
obvious thing to do is rewrite the above as in 
$$
    \int_{ x_0 < x_1 < \cdots < x_k}  \kappa_{(x_0(a), \eta_0(a))}(dx_1) \kappa_{(x_1, +\infty)} (dx_2) \dots \kappa_{(x_{k}, +\infty)}(\{+\infty\}),
$$
and use the $k=0$ result. In particular, we know, now emphasizing the processes involved, that $ \kappa_{(x_0(a), \eta_0(a))}^{\eta(a)}(dx) $ tends 
weakly to $\kappa_{(\lambda, w)}^p(dx)$ in the sense of measures. Again this includes  $  \lim_{a \rightarrow \infty} \kappa_{(x_0(a), \eta_0(a))}^{\eta(a)}(\{+\infty\}) =  \kappa_{(\lambda, w)}^p(\{+\infty\})$, which is all that was needed for the first step. The same argument
gives convergence of the passage time distributions starting from fixed initial conditions, or that 
$ \kappa_{(x_0, +\infty)}^{\eta(a)}(dx) \rightarrow   \kappa_{(x_0, +\infty)}^p(dx)$ weakly. Once more, technical issues associated with starting from 
$+\infty$ 
 were dealt with in \cite{RR}: since $+\infty$  is an entrance point for all $\eta(a)$ and $p$, one can well approximate the distributions(s) by the same process started at some (large) finite point.

Consider  $k=1$, which enough to explain what needs to be done.  The issue is the convergence of the integral
$ \int \mu_n(dx) f_n(x)$ to its intended limit 
in which: (1) the (sub-probability) measures $\mu_n$ are absolutely continuous and converge weakly to a (sub-probability) measure $\mu $ and,  (2) $f_n \rightarrow f$ pointwise where all $f_n$ and $f$ are uniformly bounded (by one) and continuous.  (Here $f_n(x) = \kappa_{(x, +\infty)}^{\eta(a)}(\{+\infty\})$ abd $ \mu_n(dx)  = \kappa_{(x_0(a), \eta_0(a))}^{\eta(a)}(dx)$. Outside of $\{ + \infty \}$ the measures $\mu_n$ and $ \mu$ posses no point masses.) By the tightness of $\mu_n$ you can cut down the domain of integration to a bounded interval $I$.  After this one can invoke say Egorov's theorem along with the absolute continuity of the sequence of measures to deal with the error term $\int_I (f_n - f) \mu_n(dx)$.

When $r>1$ the only novelty in establishing the process level convergence is the interaction terms. Following \eqref{pathsub} we scale $\mathbf{q}$ as in
$$
   \eta_i(x) = a^{-2/3} q_i(a^{-2/3} x) - a^{1/3}, \quad  i = 1, \dots, r,
$$
and \eqref{scaledr1diffusion} is replaced by: for a given index $i$, 
\begin{align*}
d\eta_{i,x}   =  \, \stb (1 + a^{-1/3} \eta_{i,x}) db_{i,x}  & +   [ a^{2/3} (1- e^{-ra^{-2/3} x}) - \eta_x^2 +\tb (a^{-1/3} + a^{-2/3} \eta_{i,x}) ] dx \\
     & +    ( a^{-1/3} + a^{-2/3} \eta_{i, x} ) \sum_{j,  \neq i} \frac{ 2 a^{1/3} + \eta_{i,x} + \eta_{j,x}}{\eta_{i,x}-\eta_{j,x}} dx.
\end{align*}
Again the diffusion coefficient and first drift coefficient converge uniformly on compact sets to the right objects $-$ the only change is the shift
from $e^{-a^{-2/3} x}$ to $e^{-ra^{-2/3}}$ in the latter. And again the initial conditions settle down the claimed limits, more or less by design.
While the final term does tend pointwise to $\sum_{j \neq i} \frac{2}{\eta_i - \eta_j}$, which is the interaction term in \eqref{softdiffusion2}, 
the convergence is certainly not uniform over $\{ \eta_1> \eta_2> \cdots > \eta_r \}$. 

 On the other hand, one does have convergence on paths up to the stopping time 
$$
\tau_{\epsilon} := \{ \inf{x} :  \eta_{i, x} - \eta_{j,x} \le \epsilon \mbox{ for any } i < j \}.
$$
And since $\tau_{\epsilon} \rightarrow \infty$ in probability as $\epsilon \rightarrow 0$ for both the $\boldsymbol\eta$ and $\mathbf{p}$ processes,
general considerations (see for instance \cite[Lemma 11.1.1]{SV}) yield the convergence $\mathbb{P}^{\boldsymbol\eta} \Rightarrow \mathbb{P}^{\mathbf{p}}$. Previously we had cited \cite[Proposition 4.3.5]{AGZ} for the non-crossing property of (a process related to) $\boldsymbol{\eta}$, which proves the result for all $\beta \ge 1$. In fact, for $\beta < 1$ it is known that there is a crossing in finite time with positive probability, see 
\cite[Theorem 3.1]{CL} which again deals with a more standard Dyson Brownian motion. We may still have the path convergence between the hard and soft edge diffusions for $\beta < 1$, but this would require an additional argument.  This is the point of Remark \ref{rem:HtoS}.

\end{proof}

\section{Fredholm approach for $\beta=2$}
\label{sec:Appendix}

For completeness we include an explicit calculation of the limiting $r$-spiked distribution function for $\beta = 2$ (and integer $a \ge 0$). The reported limiting kernel has already appeared 
in \cite{DF}. The main  point here is to detail the trace norm convergence for the associated integral operators, and so deduce the corresponding Fredholm determinant 
form of the distribution. We follow the original strategy of \cite{BBP}. A second motivation is to carry out the hard-to-soft limit in this context, giving an analytic proof 
of Corollary \ref{cor:multisupcrit1}, at least for $\beta = 2$ and $a = 0, 1,\dots$, etcetera. 

Now let $X$ be a complex $n\times (n+a)$ Gaussian matrix with diagonal covariance $\Sigma = \Sigma_r \oplus I_{n-r}$ 
and $\Sigma_r  = \mbox{diag}(\sigma_1, \sigma_2, \dots, \sigma_r)$, and  let $\Lambda_{min} = \Lambda_{min}(\Sigma_r)$ denote the minimal eigenvalue
of $X X^{\dagger}$. Our starting point is a version of the following basic formula.

\begin{proposition}  (After \cite[Proposition 2.1]{BBP}) At finite $n$ there is the identity
$$
   \pr \Bigl( n \Lambda_{min}(\sigma_1, \dots , \sigma_r) > t \Bigr) = {\det}_{L^2[0,t]} ( I - \mathsf{K}_n )
$$
where $\mathsf{K_n} = \mathsf{K_{n, a, \Sigma_r}}$ is the kernel operator defined by
\begin{equation}
\label{FredK}
    \mathsf{K}_{n}(x,y) = \frac{1}{n} \int_{\mathcal{C}_{1, 1/{\boldsymbol{\sigma}}}} \frac{dz}{2 \pi i} \, \int_{\mathcal{C}_0}  \frac{dw}{2 \pi i}  \, \frac{e^{- \frac{x}{n} z + \frac{y}{n} w }}{w-z}  
    \left(\frac{z}{w} \right)^{r+a}    \left(  \frac{ 1- \frac{1}{w}}{1 - \frac{1}{z}} \right)^{n-r}  \prod_{k=1}^r \frac{\sigma_k^{-1} - w}{\sigma_k^{-1} - z}.                                            
\end{equation}
Here $\mathcal{C}_0$ and $\mathcal{C}_{1, 1/\boldsymbol{\sigma}}$ are  any disjoint, simple, closed contours (oriented counter-clockwise) which enclose zero and 
$\{ 1, \sigma_1^{-1}, \dots,
\sigma_r^{-1} \}$, respectively.
\end{proposition}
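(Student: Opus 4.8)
The plan is to deduce the identity from the determinantal structure of the complex ($\beta=2$) spiked Wishart ensemble at finite $n$, in the spirit of \cite{BBP}, using the closed-form correlation kernel already computed in \cite{DF}.

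\emph{Step 1 (eigenvalue density).} The matrix $W:=XX^{\dagger}$ is a complex Wishart matrix with $m=n+a$ degrees of freedom and population covariance $\Sigma$, so its density on positive-definite $n\times n$ Hermitian matrices is proportional to $(\det W)^{a}e^{-\mathrm{tr}(\Sigma^{-1}W)}$. Integrating out the eigenvectors by the Harish--Chandra/Itzykson--Zuber formula applied to $\Sigma^{-1}$, whose spectrum is $\mu=(\sigma_1^{-1},\dots,\sigma_r^{-1},1,\dots,1)$ with the eigenvalue $1$ repeated $n-r$ times, yields the eigenvalue density
$$
  p(\lambda_1,\dots,\lambda_n)\ \propto\ \Delta(\lambda)\prod_{i=1}^n \lambda_i^{a}\cdot\frac{\det\big[e^{-\mu_j\lambda_i}\big]_{i,j=1}^n}{\Delta(\mu)} .
$$
Passing to the confluent limit in which $n-r$ of the $\mu_j$ coalesce to $1$ (a standard divided-difference computation, the degeneracy of $\Delta(\mu)$ in the denominator being matched by that of the numerator) turns the corresponding columns into $\lambda_i^{a+k}e^{-\lambda_i}$, $k=0,\dots,n-r-1$, leaving
$$
  p(\lambda)\ \propto\ \Delta(\lambda)\,\det\!\Big[\,\underbrace{\lambda_i^{a}e^{-\lambda_i},\dots,\lambda_i^{a+n-r-1}e^{-\lambda_i}}_{n-r},\ \underbrace{\lambda_i^{a}e^{-\lambda_i/\sigma_1},\dots,\lambda_i^{a}e^{-\lambda_i/\sigma_r}}_{r}\,\Big]_{i=1}^n .
$$
This is a polynomial (biorthogonal) ensemble.

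\emph{Step 2 (determinantal correlations and the kernel).} Any ensemble of the form $\propto\det[\psi_i(\lambda_j)]\det[\phi_i(\lambda_j)]$ --- here $\psi_i(\lambda)=\lambda^{i-1}$ coming from the Vandermonde and $\phi_i$ the functions above --- is a determinantal point process with correlation kernel $K_n(x,y)=\sum_{i,j}(A^{-1})_{ji}\,\phi_i(x)\psi_j(y)$, where $A_{ij}=\int_0^\infty\phi_i\psi_j$ (all integrals converging by the exponential decay). Evaluating the dual functions through a residue computation expresses $K_n$ as the double contour integral
$$
  K_n(u,v)=\int_{\mathcal{C}_{1,1/\boldsymbol\sigma}}\frac{dz}{2\pi i}\int_{\mathcal{C}_0}\frac{dw}{2\pi i}\ \frac{e^{-uz+vw}}{w-z}\Big(\frac zw\Big)^{r+a}\Big(\frac{1-1/w}{1-1/z}\Big)^{n-r}\prod_{k=1}^r\frac{\sigma_k^{-1}-w}{\sigma_k^{-1}-z},
$$
the $z$-contour picking up the poles at $1$ (order $n-r$) and at the $\sigma_k^{-1}$ (simple), the $w$-contour the pole at $0$, and $1/(w-z)$ serving as the Cauchy kernel. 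This is precisely the kernel recorded in \cite{DF}, which we may simply cite.

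\emph{Steps 3--4 (gap probability and rescaling).} For a determinantal point process with locally trace-class kernel $K_n$ one has $\pr(\text{no points in }J)={\det}_{L^2(J)}(I-K_n)$; since $\Lambda_{\min}(\Sigma_r)$ is almost surely positive, $\{n\Lambda_{\min}>t\}$ is exactly the event of no eigenvalues in $(0,t/n)$, whence $\pr(n\Lambda_{\min}>t)={\det}_{L^2(0,t/n)}(I-K_n)$. Substituting $x\mapsto x/n,\ y\mapsto y/n$ in the Fredholm series absorbs one Jacobian factor $n^{-1}$ per variable, so with $\mathsf{K}_n(x,y):=\tfrac1n\,K_n(x/n,y/n)$ one gets ${\det}_{L^2(0,t/n)}(I-K_n)={\det}_{L^2(0,t)}(I-\mathsf{K}_n)$; inserting $u=x/n,\ v=y/n$ into the contour integral above and pulling out the $\tfrac1n$ reproduces \eqref{FredK} verbatim. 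That any disjoint admissible contours may be used is Cauchy's theorem, the integrand being meromorphic with poles only at $0$ and at $\{1,\sigma_1^{-1},\dots,\sigma_r^{-1}\}$. The only genuine work lies in Steps 1--2 --- carrying out the confluent HCIZ limit so that the $n-r$ unit population eigenvalues produce the pure Laguerre block and the spikes the columns $\lambda^{a}e^{-\lambda/\sigma_k}$, and then pinning down the exact exponents and contours in the double-integral form of the correlation kernel (or, as we do, invoking \cite{DF}); the passage from determinantal correlations to the Fredholm gap probability and the $n$-rescaling are routine.
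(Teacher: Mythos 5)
The paper does not actually prove this proposition; it states it as adapted from Proposition 2.1 of \cite{BBP} and immediately moves on to the $n\to\infty$ asymptotics in Claim~\ref{thm:lastthm}. Your derivation fills in precisely what \cite{BBP} does for the largest eigenvalue, transposed to the smallest one: Wishart density, HCIZ integral followed by the confluent limit at the repeated unit population eigenvalue, the resulting biorthogonal ensemble and its correlation kernel, the double-contour form of that kernel (here delegated to \cite{DF}, which is the same reference the paper relies on), and the standard gap-probability/rescaling step. The one point that genuinely differs from the soft-edge case --- that $\{n\Lambda_{\min}>t\}$ is the event of no eigenvalues in $(0,t/n)$ --- you handle correctly, so the argument is sound and is the route the cited works take.
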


It is  easy to show that:

\begin{claim} 
\label{thm:lastthm} Setting $\sigma_k = \frac{c_k}{n}$ for  $k=1,\dots, r$ in \eqref{FredK} and taking  $n \rightarrow  \infty$,  $K_n$ converges in the trace norm of operators on $L^2[0,t] \mapsto L^2[0,t]$ to $\mathsf{K}$ defined by
\begin{equation*}
\label{FredKLimit}
  \mathsf{K}(x,y) =  \left(  \int_{\mathcal{C}_{1/\boldsymbol{c}}}  \frac{dz}{2 \pi i} \, \int_{\mathcal{C}_0}  \frac{dw}{2 \pi i} + 
   \int_{\mathcal{C}}  \frac{dz}{2 \pi i} \, \int_{\mathcal{C}^{\prime}}  \frac{dw}{2 \pi i}   \right) \mathsf{k}(x,y; z, w)
\end{equation*}
where
\begin{equation*}      
     \mathsf{k}(x,y; z,w ) =   \frac{e^{- {x} (z - \frac{1}{z}) + {y} (w - \frac{1}{w} ) }}{w-z} 
    \left(\frac{z}{w} \right)^{r+a} \prod_{k=1}^r \frac{1 -c_k w}{1  - c_k z}.
\end{equation*}
The notation/conventions are repeated from above in that   $\mathcal{C}_0$ and $\mathcal{C}_{1/\boldsymbol{c}}$ are disjoint, simple, closed contours, with  the latter 
 enclosing
${c_1^{-1}, \dots, c_r^{-1}}$. The contours $\mathcal{C}$ and $\mathcal{C}^{\prime}$ can be taken any nested disjoint contours surrounding the origin.
\end{claim}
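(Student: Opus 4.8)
The plan is to carry out the Fredholm‑determinant asymptotics of \cite{BBP}; since $\mathsf K_n$ already comes in double‑contour form the work reduces to identifying the pointwise limit of the integrand after a rescaling, splitting the outer contour, and then doing the trace‑norm bookkeeping — this last being the substantive point. First substitute $\sigma_k=c_k/n$ into \eqref{FredK} and rescale the integration variables $z\mapsto nz$, $w\mapsto nw$; this leaves the operator $\mathsf K_n$ unchanged. The prefactor $\tfrac1n$, the two Jacobians, and the Cauchy factor combine to $\tfrac{dz\,dw}{w-z}$; the rational blocks become exactly $\bigl(\tfrac zw\bigr)^{r+a}\prod_k\tfrac{1-c_kw}{1-c_kz}$ because $\sigma_k^{-1}=n/c_k$; and the remaining factor $e^{-\frac xn z+\frac yn w}\bigl(\tfrac{1-1/w}{1-1/z}\bigr)^{n-r}$ converges, locally uniformly for $z,w$ bounded away from $0$, to the corresponding factor of $\mathsf k$, via $(n-r)\log\bigl(1-\tfrac1{nz}\bigr)=-\tfrac1z+O(\tfrac1n)$. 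Call $\mathsf k_n$ the rescaled finite‑$n$ integrand, so $\mathsf k_n\to\mathsf k$ pointwise away from $z=0$. After the rescaling the $z$‑contour surrounds $\{n^{-1},c_1^{-1},\dots,c_r^{-1}\}$ and the $w$‑contour is a small loop around $0$ interior to it; the pole formerly at $z=1$ has become an order‑$(n-r)$ pole at $z=n^{-1}$ which slides onto the origin as $n\to\infty$, where in the limit $(1-1/(nz))^{-(n-r)}\to e^{1/z}$ carries an essential singularity.

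\emph{Splitting the outer contour.} Next deform the rescaled $z$‑contour into $\mathcal C_{1/\boldsymbol c}\cup\mathcal C$: here $\mathcal C_{1/\boldsymbol c}$ is a fixed contour enclosing only $\{c_1^{-1},\dots,c_r^{-1}\}$, say lying in $\{\mathrm{Re}\,z\ge\delta\}$ with $\delta=\tfrac12\min_kc_k^{-1}$, and $\mathcal C$ is a fixed small loop around the origin enclosing the $w$‑contour $\mathcal C_0$ and, for all large $n$, the point $n^{-1}$. For finite $n$ the integrand, as a function of $z$, is rational with poles only at the $c_k^{-1}$, at $n^{-1}$, and at $z=w\in\mathcal C_0$, all of which are captured inside $\mathcal C_{1/\boldsymbol c}$ or $\mathcal C$; thus the deformation sweeps a pole‑free region and produces no residue, giving $\mathsf K_n=\mathsf K_n^{(1)}+\mathsf K_n^{(2)}$, the double integrals over $(\mathcal C_{1/\boldsymbol c},\mathcal C_0)$ and $(\mathcal C,\mathcal C_0)$. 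On each piece $z$ stays off the origin, $\mathsf k_n\to\mathsf k$, and in the second piece $\mathcal C_0$ may afterwards be relaxed to any loop $\mathcal C'$ interior to $\mathcal C$ — this reproduces the two terms of $\mathsf K$ in the statement.

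\emph{Trace‑norm convergence.} For $\mathsf K_n^{(1)}$ take $\mathcal C_0=\{|w|=\rho_0\}$ with $\rho_0<\delta$, pick $\mu$ with $\rho_0<\mu<\delta$, and write $\tfrac1{w-z}=-\int_0^\infty e^{-(z-\mu)s}e^{-(\mu-w)s}\,ds$; this factors $\mathsf K_n^{(1)}=\mathsf A_n\mathsf B_n$ with $\mathsf A_n\colon L^2(\RR_+,ds)\to L^2[0,t]$, $\mathsf B_n\colon L^2[0,t]\to L^2(\RR_+,ds)$ the $z$‑ and $w$‑halves of the integrand, the split point $\mu$ making the $z$‑half decay like $e^{-(\delta-\mu)s}$ and the $w$‑half like $e^{-(\mu-\rho_0)s}$. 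On these fixed compact contours all the $n$‑sensitive factors are bounded uniformly in $n$ — $e^{-\frac xn z}$ trivially, $(1-\tfrac1{nz})^{\mp(n-r)}$ because it converges to $e^{\pm1/z}$, the rational blocks because the contours avoid their poles — so the Hilbert–Schmidt norms $\|\mathsf A_n\|_2,\|\mathsf B_n\|_2$ are uniformly bounded, and the pointwise limits of the half‑kernels with these dominations give $\mathsf A_n\to\mathsf A$, $\mathsf B_n\to\mathsf B$ in Hilbert–Schmidt norm; hence $\|\mathsf K_n^{(1)}-\mathsf A\mathsf B\|_1\le\|\mathsf A_n-\mathsf A\|_2\|\mathsf B_n\|_2+\|\mathsf A\|_2\|\mathsf B_n-\mathsf B\|_2\to0$ in trace norm. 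For $\mathsf K_n^{(2)}$ take $\mathcal C=\{|z|=\rho_1\}$, $\mathcal C_0=\{|w|=\rho_0\}$ with $\rho_0<\rho_1<\min_kc_k^{-1}$ and expand $\tfrac1{w-z}=-\sum_{j\ge0}w^jz^{-j-1}$; this writes $\mathsf K_n^{(2)}=-\sum_{j\ge0}\alpha_n^{(j)}\otimes\beta_n^{(j)}$ with $\|\alpha_n^{(j)}\|_2\le C\rho_1^{-j}$ and $\|\beta_n^{(j)}\|_2\le C\rho_0^{\,j}$ uniformly in $n$ (again using boundedness of $(1-\tfrac1{nz})^{-(n-r)}$ on $\mathcal C$ for large $n$), so $\sum_j\|\alpha_n^{(j)}\|_2\|\beta_n^{(j)}\|_2<\infty$ uniformly; term‑by‑term $L^2$‑convergence of $\alpha_n^{(j)},\beta_n^{(j)}$ with this summable domination yields $\mathsf K_n^{(2)}\to\mathsf K^{(2)}$ in trace norm. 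Adding the two pieces gives $\|\mathsf K_n-\mathsf K\|_1\to0$; in particular $\mathsf K$ is trace class and ${\det}_{L^2[0,t]}(I-\mathsf K_n)\to{\det}_{L^2[0,t]}(I-\mathsf K)$.

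\emph{Main obstacle.} The content sits in the trace‑norm step, and within it in producing a single $n$‑independent contour configuration on which $\mathsf k_n\to\mathsf k$ holds and on which all three $n$‑sensitive ingredients — $(1-\tfrac1{nz})^{\pm(n-r)}$, $e^{-\frac xn z}$, and the rescaled Cauchy kernel — are simultaneously uniformly controlled (including distributing the $e^{-(z-w)s}$ decay correctly across the factorization). The genuine wrinkle, and the reason for the contour split, is the order‑$(n-r)$ pole at $z=n^{-1}$ migrating to the origin: it must be isolated inside the fixed loop $\mathcal C$, which is possible precisely because $n^{-1}\to0$, after which the two pieces of $\mathsf K_n$ are handled by the two slightly different factorizations above.
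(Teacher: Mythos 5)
Your overall strategy matches the paper's: split the outer $z$--contour into a piece around the $\sigma_k^{-1}$'s and a piece near $\{0,1\}$, rescale $z\mapsto nz$, $w\mapsto nw$, and pass to the limit in the now $n$--independent contours. The substantive difference is the trace--norm step. The paper shows that on the fixed contours the only $n$--dependence is the factor $\bigl(1-\tfrac1{nw}\bigr)^{n-r}/\bigl(1-\tfrac1{nz}\bigr)^{n-r}$, which converges uniformly to $e^{1/z-1/w}$, so that $\mathsf K_n(x,y)\to\mathsf K(x,y)$ uniformly on $[0,t]^2$; it then upgrades this (together with convergence of $\int_0^t\mathsf K_n(x,x)\,dx$ and weak convergence) to trace--norm convergence by citing Simon \cite[Theorem~2.20]{Simon}. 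You instead factor $\tfrac1{w-z}$ explicitly --- as $-\int_0^\infty e^{-(z-\mu)s}e^{-(\mu-w)s}\,ds$ on the $(\mathcal C_{1/\boldsymbol c},\mathcal C_0)$ piece and as $-\sum_{j\geq0}w^jz^{-j-1}$ on the nested piece --- producing Hilbert--Schmidt factorizations and a uniformly summable series of rank--one operators. Both arguments are valid; yours is more hands--on and avoids citing the Grümm/Simon criterion, at the cost of separate bookkeeping for the two pieces.

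There is, however, one incorrect step in your contour deformation. You assert that splitting the rescaled $z$--contour into $\mathcal C_{1/\boldsymbol c}\cup\mathcal C$ ``sweeps a pole-free region and produces no residue.'' That is not true as stated: the original (rescaled) $z$--contour encircles $1/n$ and the $c_k^{-1}$ but \emph{not} $\mathcal C_0$, while your loop $\mathcal C$ is chosen to enclose $\mathcal C_0$. So for each fixed $w\in\mathcal C_0$ the deformation crosses the simple pole at $z=w$, and a residue \emph{is} picked up. The argument is saved by the observation (this is precisely what the paper means by ``Fubini plus two residue calculations'') that
$$
\operatorname{Res}_{z=w}\mathsf k_n(x,y;z,w) \;=\; -e^{(y-x)w},
$$
which is entire in $w$ and hence integrates to zero over the closed contour $\mathcal C_0$. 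You should replace ``produces no residue'' with this Fubini/residue computation; once that is done the rest of your proof goes through.
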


The probabilistic content of Claim \ref{thm:lastthm} is that
\begin{equation}
      \lim_{n \rightarrow \infty} \pr \Bigl( n \Lambda_{min}({c_1}/{n}, \dots , {c_k}/{n}) > t \Bigr) = {\det}_{L^2[0,t]} ( I - \mathsf{K}).
\end{equation}
The related result in \cite{DF} establishes the pointwise convergence of ${\mathsf{K}}_n(x,y)$ to ${\mathsf{K}}(x,y)$ $-$ attention to  error estimates being beside the point for the considerations there. More notably, the authors of \cite{DF} derive different formulations
for the $\mathsf{K}$ kernel, showing for instance that it is a rank $r$ perturbation of a conjugated  Bessel kernel. This stands in analogy to the results of \cite{BBP} which
show that the $r$-spiked soft edge kernel is a rank $r$ perturbation of the Airy kernel.  Here we highlight the following consequence of 
the operator  convergence to $\mathsf{K}$. 
Compare part (b) of \cite[Theorem 1.1]{BBP}.

\begin{corollary}
\label{cor:HtSbeta2}
 Let $\mathsf{K}_{c}$ denote kernel operator $\mathsf{K}$  in the case that $c_1 = \cdots = c_r = c$. Then,
$$
   \lim_{c \downarrow 0} {\det}_{L^2[0, ct]} ( I - \mathsf{K}_{c} ) = {\det}_{L^2[0, t]} ( I -  \mathsf{L} ),
$$
where $\mathsf{L}$ is the projection onto the span of the first $r$ Laguerre functions of parameter $a$. In other words, the distributional
limit of  $\frac{1}{c} \Lambda_{min} (cI)$ as $n \rightarrow \infty$ then $c \rightarrow 0$ (in that order) is that of the 
 minimal eigenvalue of a $(r, r+a)$ complex Wishart matrix. 
\end{corollary}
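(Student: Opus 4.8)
The statement is the $\beta=2$, Fredholm–determinant incarnation of the hard-to-soft transition, and the plan is to follow the explicit kernel $\mathsf{K}$ of Claim~\ref{thm:lastthm} through the rescaling and the $c\downarrow 0$ limit, then invoke continuity of the Fredholm determinant in trace norm. First I would record the elementary rescaling identity obtained by the substitution $x\mapsto cx$ in the Fredholm expansion,
$$
 {\det}_{L^2[0,ct]}(I-\mathsf{K}_c)={\det}_{L^2[0,t]}(I-\tilde{\mathsf{K}}_c),\qquad \tilde{\mathsf{K}}_c(x,y):=c\,\mathsf{K}_c(cx,cy),
$$
so that everything reduces to identifying the trace-norm limit of $\tilde{\mathsf{K}}_c$ on the \emph{fixed} window $L^2[0,t]$.

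Next I would split $\mathsf{K}_c$ into its two contour pairs. In the ``spike'' pair the $z$-contour surrounds $1/c$, which runs off to infinity, so I would substitute $z=\zeta/c$ (the $\zeta$-contour now surrounding $1$), leaving the $w$-contour $\mathcal{C}_0$ and the second, origin-encircling pair untouched. For that second pair all the factors $(1-cw)/(1-cz)$, $e^{-cx(z-1/z)}$ and $e^{cy(w-1/w)}$ tend to $1$ uniformly on the fixed contours; the $c$-independent limit integrand $\frac{1}{w-z}(z/w)^{r+a}$ has vanishing double contour integral (the residues at $w=0$ and $w=z$ cancel), and the remaining contribution is $O(c)$ uniformly for $x,y\in[0,t]$, hence negligible in Hilbert–Schmidt and a fortiori trace norm. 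For the spike pair, after the substitution the $w$-integral is a residue at $w=0$: since $e^{-cy/w}w^{-(r+a)}$ furnishes a factor $c^{\,r+a-1}$ that exactly cancels the $c^{-(r+a-1)}$ produced by $(z/w)^{r+a}/(w-z)=c^{\,r+a-1}\zeta^{r+a}/\big(w^{r+a}(cw-\zeta)\big)$, the limit is finite, and carrying out both the $w$-residue at $0$ and the $\zeta$-residue at $1$ yields an explicit kernel of rank at most $r$, of the form $e^{-x}\times(\text{polynomial of degree}\le r-1 \text{ in }x)$ paired against polynomials in $y$. I would then recognize the resulting double contour integral as the standard contour representation of the $r$-point Laguerre unitary ensemble correlation kernel with parameter $a$ — equivalently, via the generating function $\sum_k L_k^{(a)}(x)t^k=(1-t)^{-a-1}e^{-xt/(1-t)}$ together with Christoffel–Darboux, the projection $\mathsf{L}$ onto the span of the first $r$ Laguerre functions — this identification being valid up to a similarity transformation $\mathsf{K}_0(x,y)\mapsto g(x)g(y)^{-1}\mathsf{K}_0(x,y)$, which leaves the determinant unchanged.

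To promote pointwise convergence to trace-norm convergence I would exhibit, for all small $c$, a single dominating kernel on $[0,t]^2$: the residue expansion above writes $\tilde{\mathsf{K}}_c$ as its limit plus an $O(c^2)$ remainder whose $x,y$-dependence is again polynomial$\times e^{-x}$, hence uniformly bounded on the compact window, and combined with the $O(c)$ bound for the Bessel pair this gives $\|\tilde{\mathsf{K}}_c-\mathsf{K}_0\|_{\mathrm{trace}}\to0$ (a continuous kernel on a bounded interval is trace class, and a uniformly bounded convergent sequence of such converges in trace norm). Continuity of $A\mapsto\det(I-A)$ in trace norm then yields ${\det}_{L^2[0,ct]}(I-\mathsf{K}_c)\to{\det}_{L^2[0,t]}(I-\mathsf{L})$, and the classical identification of the $r$-point LUE gap probability with $\pr(\Lambda_{\min}>t)$ for the $(r,r+a)$ complex Wishart ensemble — whose correlation kernel \emph{is} the projection onto the first $r$ Laguerre functions of parameter $a$ — supplies the probabilistic reading and hence the promised analytic proof of Corollary~\ref{cor:multisupcrit1} at $\beta=2$, integer $a$.

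The main obstacle is twofold and intertwined. Analytically, it is establishing trace-norm rather than merely pointwise convergence (the point that goes beyond \cite{DF}), which forces uniform-in-$c$ control of the rescaled contour integrals — in particular of the essential singularities $e^{\pm c(\cdots)/w}$ and $e^{\pm c(\cdots)/z}$ near the origin-encircling contours as $c\downarrow0$ — so that a $c$-independent dominating kernel is available. On the special-function side, it is the bookkeeping that turns the explicit double residue into the Laguerre projection: selecting the correct gauge $g$ and matching the contour form of the limiting kernel with the finite-$r$ LUE kernel. Neither step is deep, but the first is where genuine care is needed.
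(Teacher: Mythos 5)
Your proposal is correct and reaches the same endpoint, but the computational route is genuinely different from the paper's and, in one key place, more labored than it needs to be.

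The paper's change of variables is $(z,w)\mapsto(z/c,w/c)$ applied \emph{simultaneously} to both contour variables and to both contour pairs (this mirrors the substitution $(z,w)\mapsto(nz,nw)$ already used in Claim~\ref{thm:lastthm}). The payoff is that the prefactor $c$ and the Jacobian $c^{-2}$ cancel against the $c$ produced by $1/(w-z)$, the spike factor becomes exactly $\bigl((1-w)/(1-z)\bigr)^r$ with no $c$ in it, and the \emph{only} residual $c$-dependence is the tame exponential $e^{-x(z-c/z)+y(w-c/w)}$, which converges uniformly on contours bounded away from the origin. Thus no residue has to be computed and no powers of $c$ have to be balanced; one reads off the $c\to0$ limit directly and then disposes of the origin-encircling pair by shrinking its ($\epsilon$-independent!) contours. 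Your choice to substitute only $z=\zeta/c$ in the spike pair and leave $w$ on $\mathcal{C}_0$ forces you to extract a $c^{\,r+a-1}$ from the $w$-residue to cancel the explicit $c^{-(r+a-1)}$ left over from $(z/(cw))^{r+a}$; this does work (the leading $m=0$ terms in the expansions of $e^{-cy/w}$, $e^{cyw}$, $(1-cw)^r$ and $1/(cw-\zeta)$ indeed supply exactly $c^{\,r+a-1}$), but it is extra bookkeeping that the symmetric substitution renders unnecessary, and it is precisely where you flag that "genuine care is needed." Similarly, for the trace-norm step the paper does not build a dominating kernel: since the rescaled kernels converge uniformly on $[0,t]^2$ to a continuous limit, trace-norm convergence follows from the same Simon criterion (uniform $\Rightarrow$ convergence of traces and of the weak topology, hence trace-norm) already invoked in proving Claim~\ref{thm:lastthm}. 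Finally, for the Laguerre identification the paper offers a shortcut you don't use: the limiting contour formula for $\mathsf{L}$ coincides with the finite-$n$ formula $n\mathsf{K}_n(nx,ny)$ at $n=r$, $\sigma_1=\dots=\sigma_r=1$, i.e.\ the null $(r,r+a)$ Wishart case, so no generating-function or Christoffel--Darboux manipulation is required.

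In short: no gap, and the same theorem is proved, but the paper's simultaneous rescaling of $z$ and $w$ is the move that makes the whole argument nearly mechanical, and is worth adopting.
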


\begin{proof}[Proof of the claim] Unlike in \cite{BBP} this requires no steepest descent argument, just a change of variables. 
First, replace outer integral over $\mathcal{C}_{1, 1/\boldsymbol{\sigma}}$  by that over the union of two disjoint loops $\mathcal{C}_{1} \cup \mathcal{C}_{1/\boldsymbol{\sigma}}$, each still avoiding the contour 
$\mathcal{C}_0$. This splits $\mathsf{K}_n$ into two terms. Consider that given by the 
$ \int_{\mathcal{C}_{1}} \frac{dz}{2 \pi i} \, \int_{\mathcal{C}_0} \frac{dw}{2\pi i}$ integral. Fubini plus two residue calculations show that this integral can be replaced by that say over $ \int_{ {|z| = 1+\delta}} \frac{dz}{2 \pi i} \, \int_{ {|w|= 1+ \delta'} } \frac{dw}{2\pi i}$ for $\delta' < \delta$, or any choice of nested contours enclosing 
both 0 and 1, but not any of the $\sigma_k^{-1}$. With our choice  $1+\delta \le \min \{\sigma_k^{-1} \}$ 
(as each  $\sigma_k^{-1}$ will scale like $n$, we can assume they are all greater that one). 

After this, put $\sigma_k = \frac{c_k}{n}$
in \eqref{FredK}, substitute $nz$ and $nw$ for $z$ and $w$. The contours of course move as well, but can be deformed again after the fact.  Setting,
$$
   {\mathsf{k}}_n(x,y; z,w) =  \frac{e^{- {x} z + {y} w }}{w-z}  
    \left(\frac{z}{w} \right)^{r+a}    \left(  \frac{ 1- \frac{1}{nw}}{1 - \frac{1}{nz}} \right)^{n-r}  \prod_{k=1}^r \frac{c_k^{-1} - w}{c_k^{-1} - z} ,
$$
we have that,
\begin{align*}
    \mathsf{K}_{n}(x,y) 
                                   & =    
     \int_{\mathcal{C}_{1/\mathbf{c}}} \frac{dz}{2 \pi i} \, \int_{\mathcal{C}_0} \frac{dw}{2 \pi i} \, \mathsf{k}_n (x,y; z,w)
                                + \int_{ \mathcal{C} }\frac{dz}{2 \pi i} \, \int_{ \mathcal{C}^{\prime}} \frac{dw}{2 \pi i} \, \mathsf{k}_n
                                (x,y; z,w).                             
\end{align*}
Here ${\mathcal{C}_{1/\mathbf{c}}}$ and  ${\mathcal{C}_0}$ can be chosen as in the claim,  and taken to be
independent of $n$. Similarly, one can choose $\mathcal{C} =\{ |z| = \epsilon \}$ and $\mathcal{C}' =\{ |z| = \epsilon' \}$, for 
$\epsilon' < \epsilon <  \min ( 1, \min ( c_k^{-1} ))$. 

 Now  the kernel  $k_n$ only depends on $n$ through  $\left(  \frac{ 1- \frac{1}{nw}}{1 - \frac{1}{nz}} \right)^{n-r}$,
 which converges uniformly to $e^{1/z-1/w}$ along the contours. And with $|w-z|$  bounded below for $(z,w)$ restricted to either set of contours,
  one sees that
  $\textsf{K}_n(x,y)  \rightarrow \textsf{K}(x,y)$ uniformly for $(x,y) \in [0,t]^2$. But then,
  $
  \int_0^t \textsf{K}_n(x,x) dx \rightarrow     \int_0^t \textsf{K}(x,x) dx,$ 
  and
  $   \int_0^t \int_0^t f(x) g(y) \textsf{K}_n(x,y) dx dy$ 
  $\rightarrow$   
  $ \int_0^t \int_0^t f(x) g(y)   \textsf{K}(x,y) 
  dx dy,$
for all $f,g \in L^2[0,t]$. This implies the trace norm convergence by \cite[Theorem 2.20]{Simon}.
\end{proof}

\begin{proof}[Proof of Corollary \ref{cor:HtSbeta2}] After scaling we can work with the kernel operator 
\begin{align}
\label{eq:cK}
  c \mathsf{K}_c(cx,cy) & =      \left( \int_{\mathcal{C}_{1/c}} \, \int_{\mathcal{C}_0}   + 
                                                          \int_{|z|=\epsilon}  \, \int_{|w| = \epsilon/2}  \right) c k(cx, cy; z, w) \, \frac{dz}{2 \pi i} \frac{dw}{2 \pi i}  \\
                                     &  =      \left( \int_{\mathcal{C}_{1}} \, \int_{\mathcal{C}_0}  + 
                                                          \int_{|z|=\epsilon}  \, \int_{|w| = \epsilon/2}  \right)               
              \frac{e^{- {x} (z - \frac{c}{z}) + {y} (w - \frac{c}{w} ) }}{w-z} 
    \left(\frac{z}{w} \right)^{r+a} \left( \frac{1 - w}{1  -  z} \right)^r  \,  \frac{dz}{2 \pi i} \frac{dw}{2 \pi i}, \nonumber
\end{align}
restricted to $L^2[0,t]$.
This is just another change simple change of variables $-$ $(z, w) \mapsto (\frac{z}{c}, \frac{w}{c})$ $-$  followed by a pair of contour deformations. Here we have chosen a particular $\mathcal{C}$ and $\mathcal{C}^{\prime}$ in the second integral (and deformed back to this choice after the variable change).

Next, by the same convergence criteria used for Theorem \ref{thm:lastthm}, and basically the same argument,  we see that \eqref{eq:cK} tends in trace norm to its evaluation at $c=0$. Letting $\epsilon \rightarrow 0$ in that case shows that the second integral in fact drops out. That is, $\det_{L^2[0,ct]} (I - \mathsf{K}_c) \rightarrow \det_{L^2[0,t]} (I - \textsf{L})$, where
\begin{equation}
\label{LKernel}
   \mathsf{L}(x,y) = \int_{\mathcal{C}_1} \frac{dz}{2 \pi i} \, \int_{\mathcal{C}_0}  \frac{dw}{2 \pi i}  \, \frac{e^{- {x} z  + {y} w  }}{w-z} 
    \left(\frac{z}{w} \right)^{r+a} \left( \frac{1 - w}{1  -  z} \right)^r.
\end{equation}
At this point one may use integral representations for the Laguerre polynomials
(for example, $L_k^a(x)= \int_{\mathcal{C}_0} \frac{dz}{2 \pi i} \frac{e^{-xz}}{z^{k+1}} (1+z)^{k+a}$) to conclude that indeed
$\mathsf{L}(x,y) = e^{-x/2} e^{-y/2} \sum_{k=0}^{r-1} L_k^a(x) L_k^a(y)$. On the other hand, the original kernel $\textsf{K}_n$, or \eqref{FredK}, corresponds to the (null) Wishart case if all the $\sigma_k$ are set to one. That is, it must degenerate
to the Laguerre projection kernel (or some conjugation thereof). But, undoing the scaling (replacing $\mathsf{K}_n(x,y)$ with $n \mathsf{K}_n(nx,ny)$), then setting $n=r$ and  $\sigma_k=1$ for $k=1,\dots, r$, we recognize this is the same formula as the right hand side of \eqref{LKernel}.
\end{proof}

\bigskip

\noindent
{\bf{Acknowledgements.}}  Many thanks to Tom Kurtz, Benedek Valk\'o, and Ofer Zeitouni for helpful discussions. B.R.~was supported in part by NSF grant DMS-1340489 and grant 229249 from the Simons Foundation.

\end{document}